\documentclass[tbtags,10pt,a4paper]{amsart}

\numberwithin{equation}{section}
\allowdisplaybreaks 

\usepackage{amssymb,eucal,mathrsfs,setspace,bbm}
\usepackage[noadjust,nobreak]{cite}
\usepackage[inner=25mm, outer=25mm, top=24mm, bottom=24mm, head=10mm, foot=10mm]{geometry}

\usepackage{array}
\newcolumntype{C}{>{$}c<{$}} 


%
\usepackage[largesc,theoremfont]{newtxtext}      
\usepackage[libertine,cmbraces,varbb,smallerops]{newtxmath}
%
\begingroup
  \makeatletter
  \@for\theoremstyle:=definition,remark,plain\do{%
    \expandafter\g@addto@macro\csname th@\theoremstyle\endcsname{%
      \addtolength\thm@preskip{.5\baselineskip plus .2\baselineskip minus .2\baselineskip}
      \addtolength\thm@postskip{.5\baselineskip plus .2\baselineskip minus .2\baselineskip}
    }%
  }
\endgroup

\usepackage{enumitem}
\setitemize{leftmargin=*}   
\setenumerate{leftmargin=*, 
  label=\textup{(\arabic*)}
	}

\usepackage{mathtools} 
\usepackage{graphicx}

\usepackage{tikz}
\tikzset{%
	>=latex,
	wt/.style={circle, draw=black, fill=black, inner sep=2pt, outer sep=0pt, minimum size=5pt}, 
	rwt/.style={circle, draw=red, fill=red, inner sep=2pt, outer sep=0pt, minimum size=5pt}, 
}

\usepackage[colorlinks=true,citecolor=red,linkcolor=blue,linktoc=all]{hyperref} 

\usepackage[capitalise,noabbrev]{cleveref} 

\theoremstyle{plain}
\newtheorem{theorem}{Theorem}[section]
\newtheorem{corollary}[theorem]{Corollary}
\newtheorem{lemma}[theorem]{Lemma}
\newtheorem{proposition}[theorem]{Proposition}

\newtheorem*{conjecture}{Conjecture}
\newtheorem{definition}[theorem]{Definition}
\newtheorem{remark}[theorem]{Remark}



\newcommand{\pd}{\partial}
\newcommand{\wun}{\mathbbm{1}}  

\newcommand{\cc}{\mathsf{c}}   
\newcommand{\dd}{\mathrm{d}}   
\newcommand{\ee}{\mathrm{e}}   
\newcommand{\ii}{\mathrm{i}}   
\newcommand{\kk}{\mathsf{k}}   
\newcommand{\uu}{\mathsf{u}}   
\providecommand{\vv}{}\renewcommand{\vv}{\mathsf{v}} 


\renewcommand{\ge}{\geqslant} 
\renewcommand{\le}{\leqslant} 

\DeclarePairedDelimiter{\brac}{\lparen}{\rparen}   
\DeclarePairedDelimiter{\sqbrac}{\lbrack}{\rbrack} 
\DeclarePairedDelimiter{\set}{\lbrace}{\rbrace}
\newcommand{\st}{\mspace{5mu} {:} \mspace{5mu}}    
\DeclarePairedDelimiter{\abs}{\lvert}{\rvert}

\newcommand{\bilin}[2]{\left\langle #1 , #2 \right\rangle}
\DeclarePairedDelimiterX{\comm}[2]{\lbrack}{\rbrack}{#1 , #2}  
\newcommand{\no}[1]{\mathopen{:} #1 \mathclose{:}} 

\DeclareMathOperator{\spn}{span}

\newcommand{\Ra}{\Rightarrow}
\newcommand{\Lra}{\Leftrightarrow}
\newcommand{\lra}{\longrightarrow}
\newcommand{\ira}{\hookrightarrow}



\newcommand{\ses}[3]{0 \lra #1 \lra #2 \lra #3 \lra 0}

\newcommand{\fld}[1]{\mathbb{#1}}    
\newcommand{\alg}[1]{\mathfrak{#1}}  
\newcommand{\grp}[1]{\mathsf{#1}}    
\newcommand{\VOA}[1]{\mathsf{#1}}    
\newcommand{\Mod}[1]{\mathcal{#1}}   
\newcommand{\categ}[1]{\mathscr{#1}} 

\newcommand{\ZZ}{\fld{Z}}
\newcommand{\NN}{\ZZ_{\ge 0}} 

\newcommand{\RR}{\fld{R}}
\newcommand{\CC}{\fld{C}}

\newcommand{\affine}[1]{\widehat{#1}}
\newcommand{\SLA}[2]{\alg{#1}_{#2}}                      
\newcommand{\SLSA}[3]{\alg{#1} (#2 \vert #3)}            
\newcommand{\AKMA}[2]{\affine{\alg{#1}}_{#2}}            

\newcommand{\sltwo}{\SLA{sl}{2}}

\newcommand{\slthree}{\SLA{sl}{3}}
\newcommand{\aslthree}{\AKMA{sl}{3}}

\newcommand{\fwt}[1]{\omega_{#1}}                        
\newcommand{\wlat}{\grp{P}}                              
\newcommand{\pwlat}[1]{\wlat_{\ge}^{#1}}                 

\DeclarePairedDelimiterXPP{\uealg}[1]{\mathsf{U}}{\lparen}{\rparen}{}{#1}   

\newcommand{\uaff}[2]{\VOA{V}^{#1}(#2)}            
\newcommand{\saff}[2]{\VOA{L}_{#1}(#2)}            
\newcommand{\usl}[1]{\uaff{#1}{\slthree}}          
\newcommand{\ssl}[1]{\saff{#1}{\slthree}}          
\newcommand{\uslk}{\usl{\kk}}                      
\newcommand{\sslk}{\ssl{\kk}}                      

\newcommand{\bpsymb}{\VOA{BP}}
\newcommand{\zamsymb}{\VOA{W}_3}
\newcommand{\slthreesymb}{\VOA{A}_2}

\newcommand{\ubp}[1]{\bpsymb^{#1}}             
\newcommand{\sbp}[1]{\bpsymb_{#1}}             
\newcommand{\bpminmod}[2]{\bpsymb(#1,#2)}      
\newcommand{\uzam}[1]{\zamsymb^{#1}}           
\newcommand{\wminmod}[2]{\zamsymb(#1,#2)}      
\newcommand{\slminmod}[2]{\slthreesymb(#1,#2)} 

\newcommand{\ubpk}{\ubp{\kk}}
\newcommand{\sbpk}{\sbp{\kk}}
\newcommand{\bpminmoduv}{\bpminmod{\uu}{\vv}}
\newcommand{\uzamk}{\uzam{\kk}}
\newcommand{\szamk}{\VOA{W}_{3,\kk}}
\newcommand{\wminmoduv}{\wminmod{\uu}{\vv}}
\newcommand{\slminmoduv}{\slminmod{\uu}{\vv}}

\newcommand{\zammaxprop}[1]{\VOA{J}^{#1}}   
\newcommand{\zammaxpropk}{\zammaxprop{\kk}}
\newcommand{\bpmaxprop}[1]{\VOA{K}^{#1}}    
\newcommand{\bpmaxpropk}{\bpmaxprop{\kk}}

\newcommand{\heis}{\VOA{H}}             
\newcommand{\lsymb}{\Pi}
\newcommand{\lvoa}{\lsymb}              

\newcommand{\cclvoa}{\cc^{\lsymb}_{\kk}}  
\newcommand{\cczam}{\cc^{\zamsymb}_{\kk}} 
\newcommand{\ccbp}{\cc^{\bpsymb}_{\kk}}   

\newcommand{\bpconj}{\gamma}            
\newcommand{\bpsfsymb}{\sigma}          
\newcommand{\lsfsymb}{\varsigma}        

\newcommand{\bpsforbit}[1]{\mathbb{O}_{#1}} 

\DeclarePairedDelimiterXPP{\zhu}[1]{\mathsf{Zhu}}{\lbrack}{\rbrack}{}{#1}

\newcommand{\fock}[1]{\Mod{F}_{#1}}           
\newcommand{\sing}[1]{\Mod{S}_{#1}}           
\newcommand{\lmod}[1]{\Pi_{[#1]}}             
\newcommand{\wmod}[2]{\Mod{W}_{#1,#2}}        
\newcommand{\bpmod}[2]{\Mod{H}_{#1,#2}}       
\newcommand{\bpcmod}[2]{\Mod{C}_{#1,#2}}      
\newcommand{\bprmod}[3]{\Mod{R}_{[#1],#2,#3}} 
\newcommand{\slmod}[2]{\Mod{L}_{#1,#2}}       

\newcommand{\wmodhw}{\wmod{h}{w}}
\newcommand{\bpmodjD}{\bpmod{j}{\Delta}}
\newcommand{\bpcmodjD}{\bpcmod{j}{\Delta}}
\newcommand{\bprmodjhw}{\bprmod{j}{h}{w}}

\newcommand{\irreps}[2]{I_{#1,#2}}            
\newcommand{\irrepsuv}{\irreps{\uu}{\vv}}

\newcommand{\wcat}{\categ{W}}  
\newcommand{\wcatk}{\wcat^{\kk}}
\newcommand{\wcatuv}{\wcat_{\uu,\vv}}

\newcommand{\klcat}{\categ{K\!L}} 

\newcommand{\qhrfunc}[1]{\Phi^{\textup{#1}}} 
\newcommand{\qhrmin}{\qhrfunc{min.}}         

\DeclareMathOperator{\tr}{tr}
\newcommand{\Gr}[1]{\bigl[#1\bigr]}                    
\newcommand{\traceover}[1]{\tr_{\raisebox{-2pt}{$\scriptstyle #1$}}} 

\DeclareMathOperator{\chmap}{ch}
\newcommand{\ch}[1]{\chmap \Gr{#1}}                    
\newcommand{\fch}[2]{\ch{#1} \left(#2\right)}          

\newcommand{\bp}{Bershadsky--Polyakov}
\newcommand{\fs}{Feigin--Semikhatov}
\newcommand{\km}{Kac--Moody}
\newcommand{\kl}{Kazhdan--Lusztig}
\newcommand{\pbw}{Poincar\'{e}--Birkhoff--Witt}
\newcommand{\zam}{Zamolodchikov}

\newcommand{\fdim}{finite-dimensional}
\newcommand{\infdim}{in\fdim}

\newcommand{\rhs}{right-hand side}

\newcommand{\hw}{highest-weight}
\newcommand{\hwv}{\hw\ vector}
\newcommand{\hwvs}{\hwv s}
\newcommand{\hwm}{\hw\ module}
\newcommand{\hwms}{\hwm s}

\newcommand{\rhw}{relaxed highest-weight}
\newcommand{\rhwv}{\rhw\ vector}
\newcommand{\rhwvs}{\rhwv s}
\newcommand{\rhwm}{\rhw\ module}
\newcommand{\rhwms}{\rhwm s}
\newcommand{\fr}{fully relaxed}
\newcommand{\frm}{\fr\ module}
\newcommand{\frms}{\frm s}
\newcommand{\chw}{conjugate highest-weight}
\newcommand{\chwv}{\chw\ vector}
\newcommand{\chwvs}{\chwv s}
\newcommand{\chwm}{\chw\ module}
\newcommand{\chwms}{\chwm s}

\newcommand{\vo}{vertex operator}
\newcommand{\voa}{\vo\ algebra}
\newcommand{\voas}{\voa s}

\newcommand{\va}{vertex algebra}

\newcommand{\vsa}{vertex subalgebra}

\newcommand{\ope}{operator product expansion}
\newcommand{\opes}{\ope s}
\newcommand{\qhr}{quantum hamiltonian reduction} 
\newcommand{\qhrs}{\qhr s}
\newcommand{\uea}{universal enveloping algebra}

\makeatletter
\renewcommand\author@andify{%
  \nxandlist {\unskip ,\penalty-1 \space\ignorespaces}%
    {\unskip {} \@@and~}%
    {\unskip \penalty-2 \space \@@and~}%
}
\makeatother

\DeclareRobustCommand{\SkipTocEntry}[5]{}

\usepackage{xpatch}
\makeatletter
\xpatchcmd{\@tocline}
          {\hfil\hbox to\@pnumwidth{\@tocpagenum{#7}}\par}
          {\ifnum#1<0\hfill\else\dotfill\fi\hbox to\@pnumwidth{\@tocpagenum{#7}}\par}
          {}{}
\makeatother


\begin{document}

\title{Weight module classifications for Bershadsky--Polyakov algebras}

\author[D~Adamovi\' c]{Dra\v zen Adamovi\' c}
\address[Dra\v zen Adamovi\' c]{Department of Mathematics, Faculty of Science \\
University of Zagreb \\
Bijeni\v cka 30, Croatia.
}
\email{adamovic@math.hr}

\author[K~Kawasetsu]{Kazuya Kawasetsu}
\address[Kazuya Kawasetsu]{
Priority Organization for Innovation and Excellence \\
Kumamoto University \\
Kumamoto 860-8555, Japan.
}
\email{kawasetsu@kumamoto-u.ac.jp}

\author[D~Ridout]{David Ridout}
\address[David Ridout]{
School of Mathematics and Statistics \\
University of Melbourne \\
Parkville, Australia, 3010.
}
\email{david.ridout@unimelb.edu.au}

\begin{abstract}
	The \bp\ algebras are the subregular \qhrs\ of the affine \voas\ associated with $\slthree$.
	In \cite{AdaRea20}, we realised these algebras in terms of the regular reduction, \zam's W$_3$-algebra, and an isotropic lattice \voa.
	We also proved that a natural construction of \rhw\ \bp\ modules has the property that the result is generically irreducible.
	Here, we prove that this construction, when combined with spectral flow twists, gives a complete set of irreducible weight modules whose weight spaces are \fdim.
	This gives a simple independent proof of the main classification theorem of \cite{FehCla20} for nondegenerate admissible levels and extends this classification to a category of weight modules.
	We also deduce the classification for the nonadmissible level $\kk=-\frac{7}{3}$, which is new.
\end{abstract}

\maketitle

\markleft{D~ADAMOVI\'{C}, K~KAWASETSU AND D~RIDOUT} 

\tableofcontents

\onehalfspacing

\section{Introduction}

\subsection{Background}

Among the most important \voas\ are the affine ones.
As one might expect, the members of this family that are associated with $\sltwo$ are the most tractable.
In this case, one can distinguish the universal \voa\ $\uaff{\kk}{\sltwo}$, where $\kk \in \CC \setminus \set{-2}$ denotes the level, from its simple quotient $\saff{\kk}{\sltwo}$.
In fact, these are distinct if and only if $\kk$ is admissible, a technical condition introduced in \cite{KacMod88}.

The best understood $\saff{\kk}{\sltwo}$ are those with $\kk \in \NN$.
For this subset of admissible levels, $\saff{\kk}{\sltwo}$ is strongly rational \cite{WitNon84,GepStr86,FreVer92}.
The remaining admissible levels are perhaps even more interesting because then $\saff{\kk}{\sltwo}$ admits finitely many irreducible \hwms\ but an uncountably infinite number of other irreducible modules \cite{AdaVer95}.
Moreover, the characters of the \hwms\ span a representation of the modular group (this was the motivation for the introduction of admissibility in \cite{KacMod88}).
Unfortunately, for admissible levels that are not nonnegative integers, Verlinde's formula \cite{VerFus88} for the fusion multiplicities fails \cite{KohFus88}.

It took twenty years to properly understand the reason behind this failure \cite{RidSL208} and another five to fix it \cite{CreMod13}.
The modern approach to the representation theory of $\saff{\kk}{\sltwo}$ at general admissible levels prioritises the so-called \rhwms, named in \cite{FeiEqu97} but previously classified in \cite{AdaVer95}, and their images under twisting by spectral flow automorphisms.
It is the characters of these modules that carry the true representation of the modular group, consistent with (a mild generalisation of) Verlinde's formula \cite{CreLog13,RidVer14}.

The characters of the \rhw\ $\saff{\kk}{\sltwo}$-modules (and their spectral flows) were proposed in \cite{CreMod12,CreMod13} and proven in \cite{KawRel18}.
Interestingly, they turn out to be proportional to the characters of the irreducible \hwms\ of a Virasoro minimal model \voa.
And not just any minimal model, but the \qhr\ of $\saff{\kk}{\sltwo}$.
This beautiful observation demanded a beautiful explanation and one was subsequently provided in \cite{AdaRea17} through a functorial construction that we call (following \cite{SemInv94}) inverse \qhr.

This construction realises a \rhw\ $\saff{\kk}{\sltwo}$-module as a tensor product of a \hw\ Virasoro module with a module over a specific lattice \voa\ $\lvoa$.
It has since been generalised to several other affine \voas\ and W-algebras, including $\saff{\kk}{\SLSA{osp}{1}{2}}$ and $N=1$ super-Virasoro \cite{AdaRea17}, the \bp\ and \zam\ algebras \cite{AdaRea20}, $\sslk$ and \bp\ \cite{AdaRel21}, and the \fs\ and W$_n$ Casimir algebras \cite{FehSub21}.
The general philosophy here is that the representation theory of a given nonrational affine \voa\ (or W-algebra), which is relatively complicated, should be reconstructed using inverse \qhr\ functors from that of another less complicated (perhaps even rational) W-algebra.

\subsection{The state of the art}

It is natural when faced with an algebraic structure to first try to classify its irreducible modules in an appropriate category.
In our case, the algebraic structure is an affine \voa\ or one of its W-algebras and an appropriate category is that consisting of the weight modules with \fdim\ weight spaces.
(This latter condition is reasonable if one wishes to study characters and their modular properties.)
The corresponding classifications are known for certain (nonsuper) rational \voas\ including affine ones at nonnegative-integer levels \cite{FreVer92}, regular W-algebras at nondegenerate admissible levels \cite{AraRat12b} and (more generally) certain W-algebras said to be exceptional \cite{AraRat19,McRRat21}.

The situation for nonrational affine \voas\ and W-algebras is not as satisfactory.
As noted above, the classification for $\saff{\kk}{\sltwo}$, with $\kk$ admissible, was completed in \cite{AdaVer95} but only for the full subcategory of \rhwms.
More recently, similar classifications have appeared for $\saff{\kk}{\SLSA{osp}{1}{2}}$ \cite{CreCos18}, $\saff{\kk}{\slthree}$ \cite{AraWei16,KawAdm21} and the simple minimal W-algebras associated to $\slthree$ \cite{FehCla20} and $\SLSA{sl}{2}{1}$ \cite{CreUni19}.
Unfortunately, the methods used in these works appear to be difficult to generalise.

If we further restrict to the full subcategory of \hwms, or more precisely the vertex-algebraic analogue of the BGG category $\categ{O}_{\kk}$, then the classification was established for all nonsuper affine \voas\ when the level is admissible \cite{AraRat12}.
The corresponding relaxed classification was subsequently shown to follow algorithmically in \cite{KawRel19}.
However, it seems that even the \hw\ classification remains out of reach for general W-algebras (and almost all superalgebras).

Our thesis is that the inverse \qhr\ functors of \cite{AdaRea17} provide a powerful new way to classify irreducible \rhwms\ of nonrational affine \voas\ and W-algebras.
By this, we mean that we expect that applying these functors to irreducible modules will result in generically irreducible \rhwms\ and that all irreducible \rhwms\ may be constructed in this manner.
(We add the qualifier ``generically'' here as some of the \rhwms\ constructed by inverse reduction are necessarily reducible.)

These expectations were shown to be met for $\saff{\kk}{\sltwo}$ in \cite{AdaRea17} by applying inverse \qhr\ functors to irreducible Virasoro modules and comparing with the known character formulae and relaxed classification for $\saff{\kk}{\sltwo}$.
As the latter results are not available for comparison in general, it becomes desirable to develop proofs that instead rely principally on inverse \qhr.
In \cite{AdaRea20}, we satisfied a part of this desire by constructing an intrinsic proof that inverse reduction maps irreducible modules to generically irreducible ones.
This was presented for the simple \bp\ \voas\ $\sbpk$ of nondegenerate admissible levels $\kk$, rather than for $\saff{\kk}{\sltwo}$, in order to illustrate the method in a nonaffine example.
(The modifications required for $\saff{\kk}{\sltwo}$ are very simple and were left to the reader.)
The generality of our method was subsequently confirmed in \cite{FehSub21}, where this generic irreducibility was established for the subregular W-algebras associated to $\SLA{sl}{n}$.

It remains to develop an intrinsic means to prove that inverse \qhr\ constructs all irreducible \rhwms, up to isomorphism.
This is the task we set ourselves in this paper.
We shall again present the method for $\sbpk$, noting that it may be readily adapted for $\saff{\kk}{\sltwo}$.
The expectation is that it will also readily generalise to higher-rank cases.

\subsection{Results}

Recall from \cite[Thms.~3.6 and 6.2]{AdaRea20} that inverse \qhr\ functors are defined for $\sbpk$ if and only if $\kk \notin \set{-3} \cup \frac{1}{2} \ZZ_{\ge-3}$.
The main results below all assume this restriction on the level.
Let $\szamk$ denote the simple regular W-algebra of level $\kk$ associated with $\slthree$.
Our first main result is then as follows:
\begin{enumerate}[label=(M\arabic*)]
	\item\label{it:mainthm1} Every irreducible fully \rhw\ $\sbpk$-module is isomorphic to the result of applying some inverse \qhr\ functor to some irreducible \hw\ $\szamk$-module.
\end{enumerate}
Here, we use the term ``fully relaxed'' to exclude the irreducible \hw\ and \chw\ modules that cannot be so realised (see \cref{def:topfullyrelaxed}).
However, these irreducibles are easily brought into the fold because of our second main result:
\begin{enumerate}[resume*]
	\item\label{it:mainthm2} Every irreducible \hw\ or \chw\ $\sbpk$-module is isomorphic to a spectral flow image of a quotient of a reducible fully \rhw\ $\sbpk$-module constructed as in \ref{it:mainthm1}.
\end{enumerate}
In fact, we may equivalently replace ``quotient'' by ``submodule'' in this result.

These two results complete the classification of irreducible $\sbpk$-modules in the relaxed category.
When $\kk$ is nondegenerate admissible, this reproduces the main result of \cite{FehCla20}.
Their proof relies heavily on the special properties of the minimal \qhr\ functor \cite{KacQua03,KacQua03b,AraRep04} and is therefore difficult to generalise to other nilpotent orbits.
Our proof does not have this problem as the \qhr\ functor we use is the regular one, needed only to classify the irreducible \hwms\ of $\szamk$, and this classification is known for higher ranks \cite{AraRat12}.
We expect that our methods will also generalise to degenerate admissible levels using the theory of exceptional W-algebras recently developed in \cite{AraRat19}.

Our inverse reduction methods also apply to nonadmissible levels of the form $\kk = -3+\frac{2}{\vv}$, where $\vv\ge3$ is odd.
For these levels, the classification given by our two main results is new.
When $\vv=3$, hence $\kk = -\frac{7}{3}$, we can make this classification explicit because $\szamk$ then coincides with the singlet algebra \cite{KauExt91} of central charge $\cc=-2$ whose representation theory is well understood, see \cite{EhoRep92,HonAut92,WanCla97,AdaCla02,CreRib20,CreRib22}.
When $\vv>3$, it remains an open problem to make the classification explicit.

Nevertheless, the $\kk = -\frac{7}{3}$ results are very interesting.
Whereas for nondegenerate levels, one obtains a finite number of \hwms, here we have four one-parameter families of such modules, one of which consists entirely of ordinary modules.
Correspondingly, we have a two-parameter family of generically irreducible \rhwms, contrasting with the one-parameter result for nondegenerate levels.
In a sense, this combines the nondegenerate result with that obtained for the nonadmissible levels $\kk\in\ZZ_{\ge-1}$ in \cite{AdaCla19,AdaBer20}.
For these levels, our inverse reduction methods do not apply, but singular vector methods may be used to deduce the existence of one-parameter families of \hwms, all of which are ordinary, and no (fully) relaxed families.

The classification of irreducible $\sbpk$-modules is therefore now very well understood in the relaxed category.
However, we are ultimately interested in the larger category of weight $\sbpk$-modules with \fdim\ weight spaces.
Happily, the classification in this category is covered by our third main result:
\begin{enumerate}[resume*]
	\item\label{it:mainthm3} Every irreducible weight $\sbpk$-module, with \fdim\ weight spaces, is isomorphic to a spectral flow of either a fully \rhwm\ or a \hwm.
\end{enumerate}
As far as we can tell, this result is also new, as is the corresponding result for $\saff{\kk}{\sltwo}$ (which is easily obtained using the same methods).
Using our method, we can also prove that for $\kk\in\ZZ_{\ge-1}$, the irreducible positive-energy $\sbpk$-modules uncovered in \cite{AdaBer20} likewise give all the irreducible weight $\sbpk$-modules with \fdim\ weight spaces.

\subsection{Outline}

We commence in \cref{sec:bpreview} by reviewing the theory of inverse \qhr\ functors between \zam\ and \bp\ modules, following \cite{AdaRea20}.
The discussion also serves to fix our notation and conventions.
The work begins in \cref{sec:heis,sec:wtcat}.
We first adapt some seminal results of Futorny \cite{FutIrr96} to the rank-$1$ Heisenberg vertex algebra.
These allow us to prove our main result \ref{it:mainthm3} above, see \cref{prop:irred=sf+relax}.

We return to inverse \qhr\ in \cref{sec:complete}.
It is not difficult to see that these functors produce every \rhwm\ for the universal \bp\ algebra $\ubpk$ (\cref{prop:completeuniv}).
The extension, \cref{prop:completeminmod}, to $\sbpk$, $\kk \notin \set{-3} \cup \frac{1}{2} \ZZ_{\ge-3}$, is our main result \ref{it:mainthm1}.
It requires a technical lemma, which we prove using the string function methods developed in \cite[App.~A]{KawRel18}, and a comparison of the maximal ideal of $\ubpk$ and that of the universal regular W-algebra $\uzamk$.

\cref{sec:completehw} then addresses the irreducible \hwms, noting first (\cref{prop:infdimhwm}) that such a module may always be realised as a quotient of a reducible \rhwm\ if its subspace of minimal conformal weight (equivalently, its image under the Zhu functor) is \infdim.
We then prove that the remaining irreducible \hwms\ can be obtained from these quotients using spectral flow (\cref{prop:fdimhwm}), thereby establishing our main result \ref{it:mainthm2}.
(This proof is the only place in which we need to use the explicit form of the embedding that underlies the inverse reduction functors.  It would be nice to dispense with it entirely, assuming that this is possible.)

As a first application of these general results, the classification of irreducible weight modules for nondegenerate levels is quickly described in \cref{sec:nondeg}.
The analogous (but new) classification for $\kk=-\frac{7}{3}$ is then detailed in \cref{thm:bp(23)class}.
We also extract from this \lcnamecref{thm:bp(23)class} the classification of irreducible ordinary $\sbp{-7/3}$-modules (\cref{thm:bp(23)fdtopclass}).
We conclude in \cref{sec:sl3} by proving a few simple consequences of our results for the irreducible ordinary modules of $\sslk$, $\kk \notin \set{-3} \cup \frac{1}{2} \ZZ_{\ge-3}$.
In particular, we deduce another new result (\cref{cor:a23kl}): the classification of irreducible ordinary modules for $\sslk$ at the nonadmissible level $\kk=-\frac{7}{3}$.

Finally, let us recall a general principle/conjecture of vertex algebra theory (and conformal field theory) which says that every irreducible module for a vertex subalgebra $\VOA{U}$ of a vertex algebra $\VOA{V}$ may be obtained from $\VOA{V}$-modules or twisted $V$-modules.
Here, we test this principle when $\VOA{U} = \ubpk$ and $\VOA{V}= \uzamk \otimes \lvoa$ in the category of weight modules.
We expect that this should be also verified for general affine vertex algebras and W-algebras related by inverse \qhr.

\addtocontents{toc}{\SkipTocEntry}
\subsection*{Acknowledgements}

We thank Thomas Creutzig, Justine Fasquel, Zac Fehily, Slava Futorny, Chris Raymond and Simon Wood for discussions related to the material presented here.

D.A.\ is partially supported by the
QuantiXLie Centre of Excellence, a project cofinanced
by the Croatian Government and European Union
through the European Regional Development Fund - the
Competitiveness and Cohesion Operational Programme
(KK.01.1.1.01.0004).

KK's research is partially supported by
MEXT Japan ``Leading Initiative for Excellent Young Researchers (LEADER)'',
JSPS Kakenhi Grant numbers 19KK0065, 21K13775 and 21H04993.

DR's research is supported by the Australian Research Council Discovery Project DP210101502 and an Australian Research Council Future Fellowship FT200100431.

\section{Realising \bp\ algebras and modules} \label{sec:bpreview}

In this \lcnamecref{sec:bp}, we review the relationship \cite{AdaRea20} between the regular and subregular W-algebras associated to $\slthree$, also known as the \zam\ \cite{ZamInf85} and \bp\ \cite{PolGau90,BerCon91} algebras, respectively.
We also review an explicit construction \cite{AdaRea20} of the \rhwms\ of the latter from those of the former.

Throughout, we shall find it convenient to parametrise our algebras by a level $\kk\ne-3$, a complex number that is ultimately identified as the eigenvalue of the central element of $\aslthree$ on the associated affine vertex algebra.
Our primary focus will be rational levels with $\kk+3>0$ for which we write
\begin{equation} \label{eq:kuv}
	\kk+3 = \frac{\uu}{\vv}, \quad \text{where}\ \uu, \vv \in \ZZ_{>0}\ \text{and}\ \gcd \set{\uu,\vv} = 1.
\end{equation}
A level $\kk$ is said to be admissible if $\uu \ge 3$ and nondegenerate if, in addition, $\vv \ge 3$.

\subsection{A lattice \voa} \label{sec:latt}

We start with the ``half-lattice'' \voa\ $\lvoa$, studied in \cite{BerRep01} (see also \cite{FriCon86}).
Here, and throughout, let $\wun$ denote the identity field of a vertex algebra.
\begin{definition}
	Given $\kk \in \CC$, let $\lvoa$ denote the universal \voa\ with strong generators $c$, $d$ and $\ee^{nc}$, $n \in \ZZ$, subject to the following \opes
	\begin{equation} \label{eq:latopes}
		\begin{aligned}
			c(z) c(w) &\sim 0, & c(z) d(w) &\sim \frac{2 \, \wun}{(z-w)^2}, & d(z) d(w) &\sim 0, \\
			c(z) \ee^{nc}(w) &\sim 0, & d(z) \ee^{nc}(w) &\sim \frac{2n \, \ee^{nc}(w)}{z-w}, & \ee^{mc}(z) \ee^{nc}(w) &\sim 0,
			&&& m,n &\in \ZZ,
		\end{aligned}
	\end{equation}
	and equipped with the conformal vector
\begin{equation} \label{eq:defkappa}
	t = \tfrac{1}{2} \no{cd} + \kappa \pd c - \tfrac{1}{2} \pd d, \quad \kappa = \tfrac{1}{3} (2\kk+3).
\end{equation}
\end{definition}

This \voa\ is simple.
The conformal weights of the generators $c$, $d$ and $\ee^{nc}$ are $1$, $1$ and $n$, respectively, while the central charge is
\begin{equation}
	\cclvoa = 2+24\kappa.
\end{equation}
We therefore take the corresponding field expansions to be
\begin{equation}
	c(z) = \sum_{m \in \ZZ} c_m z^{-m-1}, \quad
	d(z) = \sum_{m \in \ZZ} d_m z^{-m-1} \quad \text{and} \quad
	\ee^{nc}(z) = \sum_{m \in \ZZ} \ee^{nc}_m z^{-m-n}.
\end{equation}
Note that the first three \opes\ of \eqref{eq:latopes} describe a symmetric bilinear form on $\spn \set{c,d}$ with $\bilin{c}{c} = \bilin{d}{d} = 0$ and $\bilin{c}{d} = 2$.
For later purposes, it will be convenient to introduce an alternative basis to $c$ and $d$, at least when $\kappa\ne0$, namely
\begin{equation} \label{eq:theotherbasis}
	a = \tfrac{1}{2} (d - \kappa c) \quad \text{and} \quad b = \tfrac{1}{2} (d + \kappa c).
\end{equation}

\begin{definition}
	\leavevmode
	\begin{itemize}
		\item The simultaneous eigenspaces of $c_0$ and $d_0$, acting on some $\lvoa$-module, are called weight spaces and their nonzero elements are weight vectors.
		\item A weight $\lvoa$-module is then a module that is the direct sum of its weight spaces.
		\item A \rhwv\ for $\lvoa$ is a weight vector that is annihilated by the $c_m$, $d_m$ and $\ee^{nc}_m$, $n \in \ZZ$, with $m>0$.
		\item A \rhw\ $\lvoa$-module is a module that is generated by a \rhwv.
	\end{itemize}
\end{definition}
\noindent We remark that a \rhwv\ is automatically an eigenvector for $t_0$.

The irreducible \rhw\ $\lvoa$-modules were classified in \cite{BerRep01}.
Let $\lmod{j}$, $[j] \in \CC/\ZZ$, denote the \rhw\ $\lvoa$-module generated by a \rhwv\ $\ee^{-b+(j+\kappa)c}$ on which the zero modes of the generating fields act as follows:
\begin{equation}
	c_0 \ee^{-b+(j+\kappa)c} = -\ee^{-b+(j+\kappa)c}, \quad
	d_0 \ee^{-b+(j+\kappa)c} = (2j+\kappa) \ee^{-b+(j+\kappa)c}, \quad
	\ee^{nc}_0 \ee^{-b+(j+\kappa)c} = \ee^{-b+(j+n+\kappa)c}.
\end{equation}
The conformal weight of $\ee^{-b+jc}$ is then $\kappa$.
Moreover, we have $\lmod{j} \cong \lmod{j+1}$, explaining the notation.
Finally, $\lmod{j}$ is irreducible and every irreducible \rhw\ $\lvoa$-module is isomorphic to some $\lmod{j}$.

There are also irreducible weight $\lvoa$-modules that are not \rhw.
Up to isomorphism, these may all be obtained by twisting the action of $\lvoa$ on some $\lmod{j}$ by spectral flow.
Let $Y_{\lsymb}$ denote the vertex map of $\lvoa$, so that $A(z) \equiv Y_{\lsymb}(A,z)$ for all $A \in \lvoa$.
Then, the action of the spectral flow map $\lsfsymb^{\ell}$, $\ell \in \ZZ$, on $\lvoa$ is given by \cite{LiPhy97}
\begin{equation} \label{eq:lattsf}
	\lsfsymb^{\ell} \brac[\big]{A(z)} = Y_{\lsymb} \brac[\big]{\Sigma(\ell b,z) A, z}, \quad \text{where}\
	\Sigma(\ell b,z) = z^{-\ell b_0} \prod_{n=1}^{\infty} \exp \brac*{\frac{(-1)^n}{n} \ell b_n z^{-n}}.
\end{equation}
There is also a similar spectral flow map given by replacing $b$ in \eqref{eq:lattsf} by $a$, but we shall not need it here.

The map $\lsfsymb^{\ell}$ may be naturally lifted to an invertible functor on the category of weight $\lvoa$-modules that is defined elementwise on objects, $v \in M \mapsto \lsfsymb^{\ell}(v) \in \lsfsymb^{\ell}(M)$, so that the action on the spectrally flowed module is given by
\begin{equation} \label{eq:lattsfmod}
	A(z) \lsfsymb^{\ell}(v) = \lsfsymb^{\ell} \brac[\big]{\lsfsymb^{-\ell} \brac[\big]{A(z)} v},
	\qquad A \in \lvoa.
\end{equation}
Every irreducible weight $\lvoa$-module is then isomorphic to some $\lsfsymb^{\ell}(\lmod{j})$ with $\ell \in \ZZ$ and $[j] \in \CC/\ZZ$.
In fact, it is easy to check that
\begin{equation} \label{eq:lattsfvec}
	\lsfsymb^{\ell}(\ee^{-b+(j+\kappa)c}) = \ee^{(\ell-1)b+(j+\kappa)c}.
\end{equation}
In particular, the vacuum state $\ee^0$ of $\lvoa$ belongs to the vacuum module $\lsfsymb(\lmod{-\kappa})$.

\subsection{The \zam\ algebra} \label{sec:zam}

The \zam\ algebra was introduced in \cite{ZamInf85}.
Its universal version $\uzamk$ coincides with the regular (or principal) level-$\kk$ W-algebra associated to $\slthree$.
\begin{definition}
	The universal \zam\ algebra $\uzamk$ is the \voa\ strongly generated by two elements $T$ and $W$, subject to the \opes
	\begin{equation}
		\begin{gathered}
	    T(z)T(w) \sim \frac{\cczam \wun}{2(z-w)^4} + \frac{2T(w)}{(z-w)^2} + \frac{\partial T(w)}{z-w}, \qquad
	    T(z)W(w) \sim \frac{3 W(w)}{(z-w)^2} + \frac{\partial W(w)}{z-w}, \\
	    W(z)W(w) \sim \frac{2 \Lambda(w)}{(z-w)^2} + \frac{\partial \Lambda(w)}{z-w}
				+ A_{\kk} \sqbrac*{\frac{\cczam \wun}{3(z-w)^6} + \frac{2 T(w)}{(z-w)^4} + \frac{\partial T(w)}{(z-w)^3}
				+ \frac{\frac{3}{10}\partial^2 T(w)}{(z-w)^2}  + \frac{\frac{1}{15}\partial^3 T(w)}{z-w}}.
		\end{gathered}
	\end{equation}
	Here, $\kk \in \CC \setminus \set{-3}$ is the level, $\Lambda$ denotes the quasiprimary field $\no{TT} - \frac{3}{10} \pd^2 T$,
	\begin{equation}
		\cczam = -\frac{2(3\kk+5)(4\kk+9)}{\kk+3} \quad \text{and} \quad
		A_{\kk} = -\frac{(3\kk+4)(5\kk+12)}{2(\kk+3)} = \frac{22+5\cczam}{16}.
	\end{equation}
\end{definition}

For certain levels, including all nondegenerate ones, the universal \zam\ algebra $\uzamk$ is not simple \cite{MizDet89,WatDet89}.
For these levels, its unique simple quotient, which we shall denote by $\wminmoduv$, is called a $\zamsymb$ minimal model \voa.
For nondegenerate levels, $\wminmoduv$ is rational and lisse \cite{AraAss10,AraRat12b}.
Moreover, we have $\wminmoduv = \wminmod{\vv}{\uu}$ and $\wminmod{3}{4} = \wminmod{4}{3} \cong \CC$.

Define modes $T_m$ and $W_m$, $m \in \ZZ$, by expanding the generating fields as
\begin{equation}
	T(z) = \sum_{m \in \ZZ} T_m z^{-m-2} \quad \text{and} \quad W(z) = \sum_{m \in \ZZ} W_m z^{-m-3}.
\end{equation}
\begin{definition}
	\leavevmode
	\begin{itemize}
		\item The eigenspaces of $T_0$, acting on a $\uzamk$-module, are the module's weight spaces and their nonzero elements are its weight vectors.
		\item A weight $\uzamk$-module is a module that is the direct sum of its weight spaces.
		\item A \hwv\ for $\uzamk$ is a simultaneous eigenvector of $T_0$ and $W_0$ that is annihilated by the $T_m$ and $W_m$ with $m>0$.
		\item A \hw\ $\uzamk$-module is a module that is generated by a \hwv.
	\end{itemize}
\end{definition}
\noindent
It may seem tempting to refine the definition of a weight vector/space to instead be a simultaneous eigenspace of $T_0$ and $W_0$.
However, there are natural examples that render this undesirable, see for instance \cite[Sec.~2.2.2]{BouW3A95}.
In particular, $W_0$ need not act semisimply on a \hw\ $\uzamk$-module, even though it does on the generating \hwv.
With the above definitions, a \hw\ $\uzamk$-module is always a weight module.

An irreducible \hw\ $\uzamk$-module $\wmodhw$ is thus determined, up to isomorphism, by the eigenvalues $h$ of $T_0$ and $w$ of $W_0$ on its \hwv\ $v_{h,w}$.
If $\kk$ is parametrised by coprime integers $\uu$ and $\vv$, as in \eqref{eq:kuv}, then we let $\irrepsuv$ denote the set of pairs $(h,w)$ such that $\wmodhw$ is a $\wminmoduv$-module.
For nondegenerate levels ($\uu,\vv\ge3$), every irreducible $\wminmoduv$-module is \hw; they were first identified in \cite{FatCon87}.
Here, we use the description of $\irrepsuv$ given in \cite{FehMod21} which is itself an adaptation of the parametrisation used in \cite{BouWSym92}.

For $\ell \in \NN$, let $\pwlat{\ell}$ be the set of triples $t = (t_0,t_1,t_2)$ of nonnegative integers satisfying $t_0+t_1+t_2=\ell$.
Given a nondegenerate level, parametrised by $\uu,\vv\ge3$ as in \eqref{eq:kuv}, consider the set $(\pwlat{\uu-3} \times \pwlat{\vv-3}) / \ZZ_3$, where the $\ZZ_3$-action is simultaneous cyclic permutation:
\begin{equation} \label{eq:Z3action}
	\nabla \colon \brac[\big]{(r_0,r_1,r_2), (s_0,s_1,s_2)} \mapsto \brac[\big]{(r_2,r_0,r_1), (s_2,s_0,s_1)}, \quad r \in \pwlat{\uu-3},\ s \in \pwlat{\vv-3}.
\end{equation}
The classifying set $\irrepsuv$ is, for nondegenerate levels, isomorphic to $(\pwlat{\uu-3} \times \pwlat{\vv-3}) / \ZZ_3$ and an isomorphism is
\begin{subequations} \label{eq:hw(rs)}
	\begin{align}
		h_{[r,s]} &= \frac{1}{3 \uu \vv} \Bigl( \brac[\big]{\vv (r_1+1)-\uu (s_1+1)} \brac[\big]{\vv (r_2+1)-\uu (s_2+1)} \Bigr. \\
		&\mspace{100mu} \Bigl. + \brac[\big]{\vv (r_1+1)-\uu (s_1+1)}^2 + \brac[\big]{\vv (r_2+1)-\uu (s_2+1)}^2 - 3(\vv-\uu)^2 \Bigr), \notag \\
		w_{[r,s]} &= \frac{\brac[\big]{\vv (r_0-r_1) - \uu (s_0-s_1)} \brac[\big]{\vv (r_0-r_2) - \uu (s_0-s_2)} \brac[\big]{\vv (r_1-r_2) - \uu (s_1-s_2)}}{3 (3 \uu \vv)^{3/2}}.
	\end{align}
\end{subequations}
We remark that the vacuum module of $\wminmoduv$ is $\wmod{0}{0}$, corresponding to $[r,s] = [(\uu-3,0,0), (\vv-3,0,0)]$.

\subsection{The \bp\ algebra} \label{sec:bp}

The universal \bp\ algebra $\ubpk$ was introduced in \cite{PolGau90,BerCon91}.
It coincides with the subregular and minimal level-$\kk$ W-algebra associated with $\slthree$ \cite{KacQua03}.
\begin{definition}
	The universal \bp\ algebra $\ubpk$ is the \voa\ strongly generated by four elements $J$, $L$, $G^+$ and $G^-$, subject to the \opes
	\begin{equation} \label{eq:bpopes}
		\begin{gathered}
			\begin{aligned}
				J(z)J(w) &\sim \frac{\kappa \wun}{(z-w)^2}, &
				L(z)G^+(w) &\sim \frac{G^+(w)}{(z-w)^2} + \frac{\pd G^+(w)}{z-w}, \\
				J(z)G^{\pm}(w) &\sim \pm \frac{G^{\pm}(w)}{z-w}, &
				L(z)G^-(w) &\sim \frac{2G^-(w)}{(z-w)^2} + \frac{\pd G^-(w)}{z-w},
			\end{aligned}
			\\
			\begin{aligned}
				L(z)J(w) &\sim -\frac{\kappa \wun}{(z-w)^3} + \frac{J(w)}{(z-w)^2} + \frac{\pd J(w)}{z-w}, \\
				L(z)L(w) &\sim \frac{\ccbp \wun}{2(z-w)^4} + \frac{2L(w)}{(z-w)^2} + \frac{\pd L(w)}{z-w},
			\end{aligned}
			\qquad
			G^{\pm}(z)G^{\pm}(w) \sim 0, \\
			G^+(z)G^-(w) \sim \frac{(\kk+1)(2\kk+3) \wun}{(z-w)^3} + \frac{3(\kk+1)J(w)}{(z-w)^2} + \frac{3\no{J(w)J(w)} + (2\kk+3) \pd J(w) - (\kk+3) L(w)}{z-w}.
		\end{gathered}
	\end{equation}
	Here, $\kk \in \CC \setminus \set{-3}$ is the level, $\kappa$ was defined in \eqref{eq:defkappa} and
	\begin{equation}
		\ccbp = -\frac{4(\kk+1)(2\kk+3)}{\kk+3}.
	\end{equation}
\end{definition}

The universal \bp\ algebra $\ubpk$ is not simple if and only if $\kk$ has the form \eqref{eq:kuv} with $\uu\ge2$ and $\vv\ge1$ \cite{GorSim06}.
In particular, this is the case for all admissible levels.
When $\ubpk$ is not simple, its unique simple quotient, which we shall denote by $\bpminmoduv$, is called a \bp\ minimal model \voa.
Contrary to the case of the $\wminmoduv$, $\bpminmoduv$ is neither rational nor lisse for nondegenerate levels \cite{AdaRea20,FehCla20}.
The same turns out to be true for admissible levels with $\vv=1$ \cite{AdaCla19,AdaBer20}.
However, $\bpminmoduv$ is rational and lisse for admissible levels with $\vv=2$ \cite{AraAss10,AraRat10}, these being exceptional levels in the sense of \cite{AraRat19}.
We remark that unlike the situation for the $\wminmoduv$, there are no isomorphisms between the $\bpminmoduv$ with different parameters.
The trivial case is $\bpminmod{3}{2} \cong \CC$.

We have chosen the conformal vector $L$ of the \bp\ algebra so that the conformal weights of the generating fields are all integral.
The corresponding mode expansions take the form
\begin{equation}
	J(z) = \sum_{n \in \ZZ} J_n z^{-n-1}, \quad
	L(z) = \sum_{n \in \ZZ} L_n z^{-n-2}, \quad
	G^+(z) = \sum_{n \in \ZZ} G^+_n z^{-n-1} \quad \text{and} \quad
	G^-(z) = \sum_{n \in \ZZ} G^-_n z^{-n-2}.
\end{equation}
With this convention, we record the commutation relations of the modes $G^+_m$ and $G^-_n$ for later convenience:
\begin{equation} \label{eq:commGG}
	\begin{split}
		\comm[\big]{G^+_m}{G^-_n} = 3 \no{JJ}_{m+n} - (\kk+3) L_{m+n} &+ \brac[\big]{\kk m - (2\kk+3) (n+1)} J_{m+n} \\
			&+ \tfrac{1}{2} (\kk+1)(2\kk+3) m(m-1) \delta_{m+n,0} \wun.
	\end{split}
\end{equation}

\begin{definition}
	\leavevmode
	\begin{itemize}
		\item The simultaneous eigenspaces of $J_0$ and $L_0$, acting on a $\ubpk$-module, are the module's weight spaces and their nonzero elements are its weight vectors.
		The corresponding weight is the pair $(j,\Delta)$ of $J_0$- and $L_0$-eigenvalues.
		\item A weight $\ubpk$-module is one that is the direct sum of its weight spaces.
		\item A \rhwv\ for $\ubpk$ is a weight vector that is annihilated by every mode with a positive index.
		\item A \hwv\ (conjugate \hwv) for $\ubpk$ is a \rhwv\ that is also annihilated by $G^+_0$ ($G^-_0$).
		\item A (relaxed/conjugate) \hw\ $\ubpk$-module is then one that is generated by a (relaxed/conjugate) \hwv.
	\end{itemize}
\end{definition}

For future work, it is useful to extend these definitions to include $\ubpk$-modules on which $J_0$ acts semisimply but $L_0$ does not.
\begin{definition} \label{def:genwts}
	\leavevmode
	\begin{itemize}
		\item The intersections of the eigenspaces of $J_0$ and the generalised eigenspaces of $L_0$, both acting on a $\ubpk$-module, are the module's generalised weight spaces and their nonzero elements are its generalised weight vectors.
		\item A generalised weight $\ubpk$-module is one that is the direct sum of its generalised weight spaces.
	\end{itemize}
\end{definition}
\noindent Note that for $\ubpk$, an irreducible generalised weight module is always a weight module.

As usual, an irreducible \hw\ $\ubpk$-module $\bpmodjD$ is determined, up to isomorphism, by the weight $(j,\Delta)$ of its \hwv.
One can of course twist the action of $\ubpk$ on $\bpmodjD$ by the conjugation automorphism $\bpconj$ defined by
\begin{equation} \label{eq:defbpconj}
	\begin{aligned}
		\bpconj\brac[\big]{J(z)} &= -J(z) + \kappa z^{-1} \wun, &
		\bpconj\brac[\big]{G^+(z)} &= zG^-(z), \\
		\bpconj\brac[\big]{L(z)} &= L(z) - \pd J(z) - z^{-1} J(z), &
		\bpconj\brac[\big]{G^-(z)} &= -z^{-1}G^+(z),
	\end{aligned}
\end{equation}
as in \eqref{eq:lattsfmod}.
The corresponding functor, also denoted by $\bpconj$, on the category of weight $\ubpk$-modules then yields a bijective correspondence between \hw\ and conjugate \hwms.
As $\bpconj(J_0) = \kappa\wun-J_0$ and $\bpconj(L_0) = L_0$, the weight of the \chwv\ of $\bpconj(\bpmodjD)$ is $(\kappa-j,\Delta)$.

The story is a little different for general irreducible \rhw\ $\ubpk$-modules.
For this case, it will be convenient to introduce some more terminology.
\begin{definition} \label{def:topfullyrelaxed}
	\leavevmode
	\begin{itemize}
		\item The top space of a \rhw\ $\ubpk$-module is the subspace spanned by its vectors of minimal conformal weight.
		\item We shall say that a \rhw\ $\ubpk$-module is \fr, for brevity, if the eigenvalues of $J_0$ on its top space fill out an entire coset in $\CC/\ZZ$.
	\end{itemize}
\end{definition}
\noindent We remark that \hw\ and \chwms\ are relaxed but never \fr.

In the relaxed case, a parametrisation of the irreducibles may be obtained by analysing the Zhu algebra $\zhu{\ubpk}$.
This is known \cite{AraRat10,AdaCla19} to be a central extension of a Smith algebra \cite{SmiCla90}.
Here, we shall think of this Zhu algebra as the zero modes of $\ubpk$ acting on general \rhwvs\ (as in \cite[App.~B]{RidRel15}).
In this framework, $\zhu{\ubpk}$ is generated by $J_0$, $L_0$, $G^+_0$ and $G^-_0$.
As always, $L_0$ is central in this algebra.
\begin{proposition}[\cite{FehCla20}] \label{prop:zhuconsequences}
	\leavevmode
	\begin{enumerate}
		\item\label{it:centraliser} The centraliser in $\zhu{\ubpk}$ of the subalgebra generated by $J_0$ and $L_0$ is $\CC[J_0,L_0,\Omega]$, where the ``cubic Casimir'' $\Omega$ is central and acts on a \rhwv\ $v$ as follows:
		\begin{equation} \label{eq:cubiccasimir}
			\Omega v = \brac*{G^+_0 G^-_0 + G^-_0 G^+_0 + 2J_0^3 - (2\kk+3)J_0^2 + J_0 - 2(\kk+3)J_0 L_0} v.
		\end{equation}
		\item\label{it:1dimwtspaces} The weight spaces of the top space of an irreducible \rhw\ $\ubpk$-module are $1$-dimensional.
		\item\label{it:trichotomy} An irreducible \rhw\ $\ubpk$-module is either \hw, \chw\ or \fr.
		\item\label{it:isomorphic} An irreducible \fr\ $\ubpk$-module is completely characterised, up to isomorphism, by the equivalence class $[j] \in \CC/\ZZ$ of its $J_0$-eigenvalues, along with the common eigenvalues $\Delta$ of $L_0$ and $\omega$ of $\Omega$ on its top space.
	\end{enumerate}
\end{proposition}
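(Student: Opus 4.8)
The plan is to recognise $\zhu{\ubpk}$ as a central extension of a Smith algebra \cite{SmiCla90} and then run the standard representation theory in that setting. Reading off the zero-mode relations from the \opes\ \eqref{eq:bpopes}, and from \eqref{eq:commGG} at $m=n=0$ (where $\no{JJ}_0$ acts as $J_0^2$ on \rhwvs), one finds that $\zhu{\ubpk}$ is generated by $J_0$, $L_0$, $G^+_0$, $G^-_0$ subject to
\begin{equation}
	\comm{J_0}{L_0} = \comm{L_0}{G^{\pm}_0} = 0, \qquad
	\comm{J_0}{G^{\pm}_0} = \pm G^{\pm}_0, \qquad
	\comm{G^+_0}{G^-_0} = P(J_0),
\end{equation}
where $P(x) = 3x^2 - (2\kk+3)x - (\kk+3)L_0$. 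Thus $L_0$ is central and, by \pbw, $\set[\big]{(G^+_0)^a J_0^b L_0^c (G^-_0)^d \st a,b,c,d \in \NN}$ is a basis, with the monomial $(G^+_0)^a J_0^b L_0^c (G^-_0)^d$ having $\ad J_0$-weight $a-d$.

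For part~(1), I would first check by a direct computation that the $\Omega$ of \eqref{eq:cubiccasimir} is central. Writing $Q(x) = 2x^3 - (2\kk+3)x^2 + x - 2(\kk+3)xL_0$, so that $\Omega = G^+_0 G^-_0 + G^-_0 G^+_0 + Q(J_0)$, centrality with $J_0$ is immediate, while centrality with $G^{\pm}_0$ reduces, on using $G^+_0 f(J_0) = f(J_0-1)G^+_0$ and $G^-_0 f(J_0) = f(J_0+1)G^-_0$, to the single polynomial identity $P(x) + P(x-1) = Q(x) - Q(x-1)$. Solving $\Omega = 2 G^+_0 G^-_0 + Q(J_0) - P(J_0)$ for $G^+_0 G^-_0$ shows $G^+_0 G^-_0 \in \CC[J_0,L_0,\Omega]$, and an induction on $a$ using the same commutation rule then gives $(G^+_0)^a (G^-_0)^a \in \CC[J_0,L_0,\Omega]$, hence $(G^+_0)^a J_0^b L_0^c (G^-_0)^a \in \CC[J_0,L_0,\Omega]$. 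Since the centraliser of the subalgebra generated by $J_0$ and $L_0$ is the $\ad J_0$-weight-zero subalgebra, which by \pbw\ is spanned by the monomials with $a=d$, it equals $\CC[J_0,L_0,\Omega]$.

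For part~(2), the top space $V$ of an irreducible \rhwm\ is an irreducible $\zhu{\ubpk}$-module of at most countable dimension, so $L_0$ and $\Omega$ act as scalars $\Delta,\omega$ by Dixmier's lemma; given $0 \ne v$ in a weight space $V_{j'}$, irreducibility forces $V_{j'}$ to be the $\ad J_0$-weight-zero part of $\zhu{\ubpk}$ applied to $v$, which by part~(1) is $\CC[J_0,L_0,\Omega]\,v = \CC v$. For part~(3), let $S \subseteq [j]$ be the set of weights occurring in $V$ and write $R(J_0,L_0,\Omega) = G^+_0 G^-_0$. If $j' \in S$ but $j'+1 \notin S$, then $G^+_0$ kills the one-dimensional space $V_{j'} = \CC v$, and since $(G^+_0)^{a+n}(G^-_0)^a v$ is a scalar multiple of $(G^+_0)^n v$ one gets $V_{j'+n} = 0$ for all $n \ge 1$; the mirror argument applies below. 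Hence $S$ is an ``interval'': either $S = [j]$, so the module is \fr; or $S$ is bounded above, so the extreme weight vector is a \hwv\ and (by irreducibility) the module is \hw; or $S$ is bounded below, giving the \chw\ case.

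For part~(4), suppose $M$ and $M'$ are irreducible \fr\ with the same data $([j],\Delta,\omega)$, and fix $v \in V_{j'} \subset M$ and $v' \in V'_{j'} \subset M'$ for a common representative $j' \in [j]$. In the \fr\ case every weight space is nonzero, so the analysis of part~(3) shows that $G^{\pm}_0$ act invertibly between consecutive weight spaces (a vanishing would propagate and kill a weight space). Rescaling the basis vector $w_n$ of $V_{j'+n}$ so that $G^+_0 w_n = w_{n+1}$, the $\zhu{\ubpk}$-action is then completely determined by $J_0 w_n = (j'+n)w_n$, $L_0 w_n = \Delta w_n$ and $G^-_0 w_n = R(j'+n,\Delta,\omega)\,w_{n-1}$ --- manifestly independent of the chosen representative $j'$ --- so $v \mapsto v'$ extends to an isomorphism of top spaces and hence of the irreducible \rhwms\ they generate. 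The main obstacle in all of this is bookkeeping: verifying the polynomial identity behind the centrality of $\Omega$, and in parts~(3) and~(4) keeping precise track of which weight spaces are allowed to vanish; granting these, the rest is routine Smith-algebra representation theory.
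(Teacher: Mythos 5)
Your proposal is correct and follows essentially the same route as the paper: parts (2)--(4) are exactly the paper's Zhu-algebra argument (one-dimensional weight spaces from the centraliser, propagation of vanishing under $G^{\pm}_0$ for the trichotomy, and determination of the top-space action by $([j],\Delta,\omega)$ via $G^+_0G^-_0, G^-_0G^+_0 \in \CC[J_0,L_0,\Omega]$ together with Zhu's theorem). The only difference is that you supply a direct Smith-algebra proof of part (1) --- including the correct polynomial identity $P(x)+P(x-1)=Q(x)-Q(x-1)$ verifying centrality of $\Omega$ --- whereas the paper simply cites this from \cite{FehCla20}.
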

\begin{proof}
	\ref{it:centraliser} is \cite[Lem.~3.20]{FehCla20}.
	It immediately implies \ref{it:1dimwtspaces}, which itself implies \ref{it:trichotomy}.
	We therefore prove \ref{it:isomorphic}.

	It suffices to show \cite{ZhuMod96} that the action of $\zhu{\ubpk}$ on the top space is determined by the weight $(j,\Delta)$ and $\Omega$-eigenvalue $\omega$ of an arbitrarily chosen weight vector $v$ in the top space.
	For this, it is sufficient to show that the actions of $J_0$, $L_0$, $G^+_0$ and $G^-_0$ on a basis of the top space are so determined.
	If $v'$ is a weight vector in the top space, then its $J_0$-eigenvalue is $j+n$, for some $n \in \ZZ$, by irreducibility.
	Irreducibility also means that $v'$ may be obtained from $v$ by acting with some combination of modes.
	Since the \pbw\ theorem holds for the mode algebra of $\ubpk$ \cite[Thm.~4.1]{KacQua03b}, we can actually obtain $v'$ using only zero modes.
	If $n\ge0$, order $G^+_0$ to the left.
	As the weight spaces of the top space are $1$-dimensional, $v'$ can only be obtained if it is a nonzero multiple of $(G^+_0)^n v$.
	Similarly, we see that $v'$ is a nonzero multiple of $(G^-_0)^{-n} v$ for $n\le0$.

	Since our module is \fr, it follows that $\set{v} \cup \set[\big]{(G^+_0)^n v, (G^-_0)^n v \st n>0}$ is a basis of its top space.
	The action of $J_0$ and $L_0$ on these basis vectors is thus fixed by $(j,\Delta)$.
	For $n\ge0$, the action of $G^+_0$ on the $(G^+_0)^n v$ and $G^-_0$ on the $(G^-_0)^n v$ is also clear.
	It therefore remains to check if the action of $G^+_0$ on the $(G^-_0)^n v$ and $G^-_0$ on the $(G^+_0)^n v$, for $n\ge1$, is likewise fixed.
	But, this is clearly the case because $G^+_0 G^-_0$ and $G^-_0 G^+_0$ act on the top space as a polynomial in $J_0$, $L_0$ and $\Omega$, by \eqref{eq:commGG} and \eqref{eq:cubiccasimir}.
\end{proof}

This almost completes the classification of irreducible \rhw\ $\ubpk$-modules --- it only remains to determine which $[j]$, $\Delta$ and $\omega$ actually correspond to irreducible modules.
Rather than delve into the details, we instead make some remarks about the analogous classification for $\bpminmoduv$.

The classification of irreducible \rhw\ $\bpminmoduv$-modules was obtained, for nondegenerate levels, in \cite[Thm.~4.20]{FehCla20} using properties of the minimal \qhr\ functor.
The proof given there is quite subtle, but the result involves the same set $\irrepsuv \cong (\pwlat{\uu-3} \times \pwlat{\vv-3}) / \ZZ_3$ that appears in the classification of irreducible $\wminmoduv$-modules (\cref{sec:zam}).
One of our aims in what follows is to rederive this classification result for $\bpminmoduv$ directly from that for $\wminmoduv$, thereby naturally explaining why this set appears.

To achieve this aim, we shall also need the spectral flow functors $\bpsfsymb^{\ell}$, $\ell \in \ZZ$, on the category of (generalised) weight $\ubpk$-modules.
They are defined in the same way as those introduced on the category of weight $\lvoa$-modules in \cref{sec:latt}, except that $b$ is replaced in \cref{eq:lattsf} by $J$.
For later convenience, we give the action of spectral flow on the modes of the generating fields:
\begin{equation} \label{eq:bpsfJL}
	\bpsfsymb^{\ell}(J_n) = J_n - \kappa \ell \delta_{n,0} \wun, \quad
	\bpsfsymb^{\ell}(G^-_n) = G^-_{n+\ell}, \quad
	\bpsfsymb^{\ell}(G^+_n) = G^+_{n-\ell}, \quad
	\bpsfsymb^{\ell}(L_n) = L_n - \ell J_n + \tfrac{1}{2} \kappa \ell(\ell+1) \delta_{n,0} \wun.
\end{equation}
It is easy to check that the spectral flow and conjugation automorphisms satisfy the dihedral relation
\begin{equation} \label{eq:dihedral}
	\bpsfsymb^{\ell} \bpconj = \bpconj \bpsfsymb^{-\ell}, \quad \ell \in \ZZ.
\end{equation}

Let $v$ be a weight vector of weight $(j,\Delta)$ in some $\bpminmoduv$-module $\Mod{M}$.
The spectral flow action \eqref{eq:lattsfmod} on $\lvoa$-module elements generalises immediately to $\bpminmoduv$-modules (and $\ubpk$-modules) by simply replacing $\lsfsymb$ by $\bpsfsymb$.
Straightforward computation now verifies that $\bpsfsymb^{\ell}(v) \in \bpsfsymb^{\ell}(\Mod{M})$ is a weight vector of weight
\begin{equation} \label{eq:bpsfweight}
	\brac[\big]{j+\kappa\ell, \Delta+\ell j+\tfrac{1}{2}\kappa\ell(\ell-1)}.
\end{equation}
This observation will turn out to be extremely useful in what follows.

\subsection{Inverse \qhr} \label{sec:iqhr}

The idea that one could invert \qhr, in some sense, goes back to \cite{SemInv94}.
However, the crucial observation that this extends to functors between module categories is much more recent \cite{AdaRea17}.
This latter observation was generalised to invert the (partial) reduction of $\ubpk$ to $\uzamk$ and $\bpminmoduv$ to $\wminmoduv$ in \cite{AdaRea20}.
Recall the definition \eqref{eq:theotherbasis} of $a,b \in \lvoa$.
\begin{theorem}[\protect{\cite[Thms.~3.6 and 6.2]{AdaRea20}}] \label{thm:iqhr}
	\leavevmode
	\begin{enumerate}
		\item\label{it:iqhruniv} For $\kk \ne -3$, there is an embedding $\ubpk \hookrightarrow \lvoa \otimes \uzamk$ of universal \voas\ given by
		\begin{equation} \label{eq:iqhr}
			\begin{gathered}
				G^+ \mapsto \ee^c \otimes \wun, \quad J \mapsto b \otimes \wun, \quad L \mapsto t \otimes \wun + \wun \otimes T, \\
				\begin{aligned}
					G^- \mapsto \ee^{-c} \otimes \brac[\Big]{\tfrac{(\kk+3)^{3/2}}{\sqrt{3}} W + \tfrac{1}{2}(\kk+2)(\kk+3) \pd T} + (\kk+3) a_{-1} \ee^{-c} \otimes T& \\
					  - \brac[\big]{a_{-1}^3 + 3 (\kk+2) a_{-2} a_{-1} + 2 (\kk+2)^2 a_{-3}} \ee^{-c} \otimes \wun&.
				\end{aligned}
			\end{gathered}
		\end{equation}
		\item\label{it:iqhrminmod} This descends to an embedding $\bpminmoduv \hookrightarrow \lvoa \otimes \wminmoduv$ of minimal model \voas\ unless $\uu \ge 2$ and $\vv = 1$ or $2$.
		For these $\uu$ and $\vv$, no such embedding of minimal model \voas\ exists.
	\end{enumerate}
\end{theorem}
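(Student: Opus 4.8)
The plan is to prove part \ref{it:iqhruniv} first by a direct computation with \opes, then deduce part \ref{it:iqhrminmod} by tracking what happens to maximal ideals under the embedding. For part \ref{it:iqhruniv}, I would define a linear map on the strong generators $J,L,G^\pm$ of $\ubpk$ by the formulas \eqref{eq:iqhr} and check that the images satisfy the defining \opes\ \eqref{eq:bpopes}. Since $\ubpk$ is the universal \voa\ on these generators subject only to those relations, verifying the \opes\ on the images automatically extends the map to a \voa\ homomorphism $\ubpk \to \lvoa \otimes \uzamk$. The \opes\ involving only $J$ and $L$ are quick: $J\mapsto b\otimes\wun$ has $b(z)b(w)\sim \frac{\kappa\wun}{(z-w)^2}$ from \eqref{eq:latopes} and \eqref{eq:theotherbasis}, and $L\mapsto t\otimes\wun+\wun\otimes T$ is a sum of two commuting Virasoro fields whose central charges $\cclvoa$ and $\cczam$ add to $\ccbp$ (this is a routine check with $\kappa=\frac13(2\kk+3)$). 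The $L$-$G^\pm$ and $J$-$G^\pm$ \opes\ reduce to confirming that $\ee^{\pm c}$ has the right conformal weight under $t$ (namely $1$ for $\ee^c$ after including the shift in $t$, and $2$ for $\ee^{-c}$ together with the W$_3$-dressing) and the right $b_0$-charge. The serious computations are the $G^+$-$G^-$ \ope\ and the $G^-$-$G^-$ \ope.

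For $G^+(z)G^-(w)$, I would expand $\ee^c(z)$ acting on the dressed $\ee^{-c}(w)$ using the lattice \ope\ $\ee^c(z)\ee^{-c}(w)$ (which by \eqref{eq:latopes} has $\ee^{mc}(z)\ee^{nc}(w)\sim 0$ for the half-lattice, so the only singularities come from normal-ordering with the $a_{-n}$-dressing and with $T$, via the $c$-$d$ \ope). The point is that the cubic-in-$a$ term in \eqref{eq:iqhr} is engineered precisely so that the $\frac{1}{(z-w)^3}$, $\frac{1}{(z-w)^2}$ and $\frac{1}{z-w}$ coefficients match $(\kk+1)(2\kk+3)\wun$, $3(\kk+1)J=3(\kk+1)b\otimes\wun$, and $3\no{JJ}+(2\kk+3)\pd J-(\kk+3)L$ respectively, after using $\no{bb}=\no{aa}$ modulo derivative/central terms coming from the bilinear form, and the identification $t\otimes\wun+\wun\otimes T = L$. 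The $W$-dependent part of $G^-$ contributes only through $\wun\otimes T$ on the right-hand side once one uses that $\ee^c$ has trivial \ope\ with $W$. For $G^-(z)G^-(w)\sim 0$, I would check that the most singular contractions --- those between the $\ee^{-c}$-$\ee^{-c}$ pair dressed by the $a_{-n}$'s and by $W$ --- cancel; here the $W(z)W(w)$ \ope\ with its $\Lambda$ and $A_\kk$ terms must conspire with the purely-lattice cubic terms to give zero, and this is where the specific constant $\frac{(\kk+3)^{3/2}}{\sqrt3}$ in front of $W$ and the coefficients $3(\kk+2)$, $2(\kk+2)^2$ in the cubic are pinned down. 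I expect this $G^-$-$G^-$ vanishing to be the main obstacle: it is a genuinely lengthy \ope\ computation mixing the W$_3$ quadratic field $\Lambda$ with cubic normally-ordered expressions in the $a$-modes, and it is the place where a sign or combinatorial factor is easiest to mismanage. One organising device is to note that $a$ has trivial \ope\ with itself ($\bilin{a}{a}=\frac14\bilin{d-\kappa c}{d-\kappa c}=0$), so all contractions among the $a_{-n}$-dressings happen only against the $\ee^{-c}$ exponentials through $\bilin{a}{b}$-type pairings, which keeps the bookkeeping manageable; one then matches against the known structure of $\uzamk$ rather than recomputing it.

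For part \ref{it:iqhrminmod}, the embedding of universal \voas\ restricts to the maximal ideals: writing $\bpmaxpropk$ for the maximal ideal of $\ubpk$ and $\zammaxpropk$ for that of $\uzamk$, one shows that under \eqref{eq:iqhr} the image of $\bpmaxpropk$ lands in $\lvoa\otimes\zammaxpropk$ (since $\lvoa$ is simple, the maximal ideal of $\lvoa\otimes\uzamk$ is exactly $\lvoa\otimes\zammaxpropk$), and conversely that the preimage of $\lvoa\otimes\zammaxpropk$ is all of $\bpmaxpropk$; this forces a quotient map $\bpminmoduv \to \lvoa\otimes\wminmoduv$, which is automatically injective because $\bpminmoduv$ is simple. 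The singular-vector calculation is where the exceptional cases $\uu\ge2,\ \vv\in\{1,2\}$ enter: for those levels the known structure of $\bpmaxpropk$ versus $\zammaxpropk$ (from \cite{GorSim06} and the W$_3$ vacuum singular vectors of \cite{MizDet89,WatDet89}) is incompatible with any embedding --- concretely, $\bpminmod{\uu}{1}$ and $\bpminmod{\uu}{2}$ contain weight modules or have a top space whose $J_0$-spectrum cannot be matched by anything in $\lvoa\otimes\wminmod{\uu}{\vv}$, e.g. because $\wminmoduv$ is too small (trivial in the $\vv=1,\ \uu=3$ case, finite-dimensional more generally). I would make the obstruction precise by comparing conformal weights and $J_0$-charges of the generating singular vectors on both sides, showing the $\lvoa$-factor cannot supply the needed $\ee^{nc}$-charge shift. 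The only genuinely delicate point here is verifying that for $\vv\ge3$ (and $\uu\ge2$) no such obstruction arises, i.e.\ that the singular vector generating $\bpmaxpropk$ really does map into $\lvoa\otimes\zammaxpropk$; this follows once one knows the lowest-conformal-weight singular vector of $\ubpk$ has the right charge, which can be read off from the explicit form of the screening-type construction or deduced from \cite{AdaRea20}.
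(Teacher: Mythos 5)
This theorem is imported verbatim from \cite[Thms.~3.6 and 6.2]{AdaRea20}; the present paper gives no proof of it, so your proposal can only be measured against that external source. Your overall architecture is the expected one: for \ref{it:iqhruniv}, define the map on the strong generators and verify the \opes\ \eqref{eq:bpopes}, invoking the universal property (via the \pbw\ theorem of \cite{KacQua03b}) to extend to a \voa\ homomorphism; for \ref{it:iqhrminmod}, show that $\bpmaxpropk$ is killed by the composite $\ubpk\to\lvoa\otimes\uzamk\to\lvoa\otimes\wminmoduv$ and use simplicity of $\bpminmoduv$ to get injectivity of the descended map.

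There are, however, three concrete problems. First, verifying the \opes\ only produces a homomorphism, whereas the statement asserts an \emph{embedding}; since $\ubpk$ is not simple at precisely the levels of interest (all admissible levels and the $\uu=2$ levels), injectivity is not automatic and you never address it. Second, your ``organising device'' for the $G^-$--$G^-$ computation rests on a miscomputation: $\bilin{a}{a}=\frac{1}{4}\bilin{d-\kappa c}{d-\kappa c}=-\kappa$, not $0$ (it is $\bilin{a}{b}$ that vanishes), so the $a$-modes do self-contract with $\kappa$-dependent coefficients and the cancellation against the $W(z)W(w)$ terms is substantially more involved than you suggest. Third, the pivotal step of \ref{it:iqhrminmod} for $\vv\ge3$ --- that the image of the singular vector generating $\bpmaxpropk$ really lies in $\lvoa\otimes\zammaxpropk$ --- is the entire content of the descent, and your only justification is that it can be ``deduced from \cite{AdaRea20}'', which is circular given that this is the theorem being proved. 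For the nonexistence claim, the obstruction is not that $\wminmoduv$ is ``too small'' (these minimal models are \infdim\ \voas, and $\wminmod{\uu}{1}$ is not trivial); for $\vv=2$, for instance, a clean contradiction is that $\bpminmod{\uu}{2}$ is rational \cite{AraRat10}, while an embedding would, by restricting the uncountably many modules $\lmod{j}\otimes\wmodhw$ with distinct $J_0$-spectra, force uncountably many nonisomorphic irreducible $\bpminmod{\uu}{2}$-modules. A correct proof along your lines would need to supply the injectivity argument, the genuine singular-vector (or character-theoretic) verification for $\vv\ge3$, and a level-by-level obstruction for $\vv\in\set{1,2}$.
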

\noindent Because $J$ is identified with $b$ in \eqref{eq:iqhr}, \cref{thm:iqhr} also identifies the spectral flow maps/functors $\lsfsymb$ and $\bpsfsymb$.
We remark that the embedding of $L$ implies the easily checked identity $\cclvoa + \cczam = \ccbp$.
This identity dictated the choice of conformal structure made in \eqref{eq:defkappa} for $\lvoa$.

\begin{corollary} \label{cor:iqhrmod}
	\leavevmode
	\begin{enumerate}
		\item\label{it:iqhrmoduniv} For $\kk\ne-3$, every $(\lvoa \otimes \uzamk)$-module is a $\ubpk$-module by restriction.
		In particular,
		\begin{equation} \label{eq:iqhrmod}
			\bprmodjhw = \lmod{j} \otimes \wmodhw
		\end{equation}
		is a $\ubpk$-module, for any $[j] \in \CC/\ZZ$ and $h,w \in \CC$.
		\item\label{it:iqhrmodminmod} For $\uu\ge2$ and $\vv\ge3$, every $\brac[\big]{\lvoa \otimes \wminmoduv}$-module is a $\bpminmoduv$-module by restriction.
		In particular, $\bprmodjhw$ is a $\bpminmoduv$-module for all $(h,w) \in \irrepsuv$.
	\end{enumerate}
\end{corollary}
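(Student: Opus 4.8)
The plan is to deduce both parts directly from \cref{thm:iqhr}, since all the work is already in the embedding statements. For part \ref{it:iqhrmoduniv}, \cref{thm:iqhr}\ref{it:iqhruniv} gives an embedding of vertex algebras $\ubpk \hookrightarrow \lvoa \otimes \uzamk$, so any module over the larger algebra pulls back along this embedding to a module over $\ubpk$. Concretely, if $M$ is a $(\lvoa \otimes \uzamk)$-module, then the vertex-algebra action map $Y^M \colon (\lvoa \otimes \uzamk) \otimes M \to M((z))$, when composed with the embedding, gives a well-defined field map for each generator $J$, $L$, $G^+$, $G^-$ of $\ubpk$; the required \opes\ \eqref{eq:bpopes} hold automatically because they hold inside $\lvoa \otimes \uzamk$ (an embedding of vertex algebras preserves all $n$-th products, hence all \opes). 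The tensor product $\lmod{j} \otimes \wmodhw$ is a module over $\lvoa \otimes \uzamk$ by the usual tensor-product construction for vertex-algebra modules, so it becomes a $\ubpk$-module by restriction; naming it $\bprmodjhw$ as in \eqref{eq:iqhrmod} is then just notation.

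For part \ref{it:iqhrmodminmod}, the argument is identical but uses \cref{thm:iqhr}\ref{it:iqhrminmod} in place of \ref{it:iqhruniv}: when $\uu \ge 2$ and $\vv \ge 3$ the embedding descends to $\bpminmoduv \hookrightarrow \lvoa \otimes \wminmoduv$ of simple (minimal model) vertex algebras, so any $(\lvoa \otimes \wminmoduv)$-module restricts to a $\bpminmoduv$-module. Since $(h,w) \in \irrepsuv$ is precisely the condition that $\wmodhw$ is a $\wminmoduv$-module, and $\lmod{j}$ is always a $\lvoa$-module, the tensor product $\bprmodjhw = \lmod{j} \otimes \wmodhw$ is a $(\lvoa \otimes \wminmoduv)$-module and hence a $\bpminmoduv$-module by restriction.

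There is essentially no obstacle here: the only thing to be careful about is that restriction along a vertex-algebra embedding genuinely produces a module (one should confirm the field axioms — locality, the vacuum axiom, translation covariance — are inherited, which is immediate since they are statements about the generating fields and their mode relations, all preserved by an embedding), and that the external tensor product of modules over two vertex algebras is a module over the tensor-product vertex algebra (standard; see e.g.\ the module theory underlying \cite{AdaRea20}). The slight subtlety worth flagging in the text is the range restriction $\vv \ge 3$ in part \ref{it:iqhrmodminmod}: this is exactly the range where \cref{thm:iqhr}\ref{it:iqhrminmod} guarantees the descended embedding exists, and for $\vv = 1$ or $2$ no such embedding of minimal model vertex algebras is available, so the corollary cannot be stated there.
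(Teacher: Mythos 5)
Your proposal is correct and follows exactly the route the paper intends: the corollary is stated without proof precisely because it is the standard restriction-along-an-embedding argument applied to \cref{thm:iqhr}, combined with the external tensor product of modules. Your additional care about inherited axioms and the $\vv\ge3$ restriction matches the paper's implicit reasoning.
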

\noindent Recalling that $v_{h,w}$ denotes the \hwv\ of $\wmodhw$, we see that the eigenvalue of $J_0 = b_0 \otimes \wun$ on the \rhwv\ $\ee^{-b+(j+\kappa)c} \otimes v_{h,w} \in \lmod{j} \otimes \wmodhw$ is $j$, explaining the conventions that we chose for the $\lmod{j}$ in \cref{sec:latt}.

Tensoring with a fixed $\lmod{j}$ thus defines a functor from the weight module category of $\uzamk$ to that of $\ubpk$, respectively $\wminmoduv$ and $\bpminmoduv$.
We call these the inverse \qhr\ functors (or just inverse reduction functors for short).
Happily, the modules constructed by these functors turn out to be relevant for classifications.

We recall a useful definition from \cite{AdaRea20}.
\begin{definition}
	A \rhw\ $\ubpk$-module is said to be almost irreducible if it is generated by its top space and all of its nonzero submodules have nonzero intersections with its top space.
\end{definition}
\noindent Of course, an irreducible \rhw\ $\ubpk$-module is almost irreducible.
However, the existence of other almost irreducible $\ubpk$-modules will be crucial for what follows.
\begin{theorem}[\protect{\cite[Cor.~5.11 and Thms.~5.12 and 6.3]{AdaRea20}}] \label{thm:iqhrmod}
	For $\kk\ne-3$, the $\ubpk$-module $\bprmodjhw$:
	\begin{enumerate}
		\item\label{it:almostirred} is indecomposable, almost irreducible and \fr;
		\item\label{it:G+inj} has a bijective action of $G^+_0$;
		\item\label{it:reducible} is, for fixed $h$ and $w$, irreducible for all but at least one, and at most three, $[j] \in \CC/\ZZ$.
	\end{enumerate}
\end{theorem}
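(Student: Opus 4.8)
The plan is to work throughout with the explicit tensor realisation $\bprmodjhw = \lmod{j} \otimes \wmodhw$ together with the embedding \eqref{eq:iqhr}, so as to reduce every assertion to the known structure of the two tensor factors plus one computation on the top space. First I would pin down the top space. Since $t_0$ acts semisimply on $\lmod{j}$ with spectrum $\kappa + \NN$ --- the minimum $\kappa$ being attained exactly on the vectors $\ee^{-b+(j+n+\kappa)c}$, $n \in \ZZ$ --- while $T_0$ acts semisimply on $\wmodhw$ with spectrum in $h + \NN$, the conformal energy $L_0 = t_0 \otimes \wun + \wun \otimes T_0$ acts semisimply on $\bprmodjhw$ with minimum $\Delta := \kappa + h$, attained exactly on $\spn \set{v_n \st n \in \ZZ}$, where $v_n := \ee^{-b+(j+n+\kappa)c} \otimes v_{h,w}$. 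The identification $J = b$ from \eqref{eq:iqhr} gives $J_0 v_n = (j+n) v_n$, so the $J_0$-spectrum on the top space is exactly the coset $[j] \in \CC/\ZZ$; this is the ``fully relaxed'' assertion. Similarly $G^+_0 = \ee^c_0 \otimes \wun$ sends $v_n \mapsto v_{n+1}$, and, more generally, $\ee^c_0$ restricts to an isomorphism between consecutive Fock summands of $\lmod{j}$, with inverse $\ee^{-c}_0$ --- a standard fact for the half-lattice \va\ $\lvoa$, rooted in $\ee^c(z)\ee^{-c}(w) \sim 0$ and $\ee^c_{-1}\ee^{-c} = \wun$. Hence $G^+_0$ is bijective on all of $\bprmodjhw$, which is \ref{it:G+inj}.

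For almost irreducibility, note that any nonzero submodule $N$ is graded both by conformal weight (as $L_0$ is semisimple) and by $J_0$-eigenvalue, hence has a nonzero subspace of some minimal conformal weight $\Delta + d$, $d \in \NN$, every vector of which is annihilated by all positive modes of $\ubpk$ (by minimality). The technical heart of the argument is to show that $d = 0$, that is, that the only vectors of $\bprmodjhw$ killed by every positive $\ubpk$-mode lie in the top space. This is the single place where the explicit form of \eqref{eq:iqhr} is genuinely needed, since the image of $\ubpk$ is a proper \vosa\ of $\lvoa \otimes \uzamk$: writing $\lvoa$ in terms of the mutually orthogonal Heisenberg pair $a,b$ of \eqref{eq:theotherbasis} and the isotropic lattice $\ZZ c$, one checks that already the modes $J_p = b_p \otimes \wun$ and $G^+_p = \ee^c_p \otimes \wun$ with $p > 0$ force such a vector down to conformal weight $\Delta$. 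Granting this, $N$ meets the top space; and the top space generates $\bprmodjhw$ (again by \eqref{eq:iqhr}, the non-positive modes of $J$, $L$ and $G^{\pm}$ rebuilding all of $\lmod{j} \otimes \wmodhw$ from it), so $\bprmodjhw$ is almost irreducible. Indecomposability then follows at once: a nonzero direct summand must meet the top space, but $G^+_0$ is bijective and preserves each summand, so it connects all the $v_n$, whence one summand contains the entire top space and therefore, by almost irreducibility, equals $\bprmodjhw$. This proves \ref{it:almostirred}.

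For \ref{it:reducible} I would analyse the action of $\zhu{\ubpk}$ on the top space. There $L_0$ acts as the scalar $\Delta$, and the central element $\Omega$ of \cref{prop:zhuconsequences}\ref{it:centraliser} acts as a scalar $\omega$, since it preserves each one-dimensional $J_0$-eigenspace $\CC v_n$ and commutes with the bijective $G^+_0$. Setting $m = n = 0$ in \eqref{eq:commGG} gives $\comm{G^+_0}{G^-_0} = 3\no{JJ}_0 - (\kk+3)L_0 - (2\kk+3)J_0$, and since the positive $J$-modes annihilate the top space, $\no{JJ}_0$ acts there as $J_0^2$; combining this with \eqref{eq:cubiccasimir} expresses both $G^+_0 G^-_0$ and $G^-_0 G^+_0$ as explicit polynomials in $J_0$ (with $L_0$ and $\Omega$ replaced by the scalars $\Delta$ and $\omega$). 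In particular $G^-_0 G^+_0$ acts on $v_n$ as $P(j+n) v_n$ for an explicit one-variable cubic $P$ whose leading coefficient is $-1 \ne 0$ and whose lower coefficients depend on $\kk$, $\Delta$ and $\omega$. Writing $G^-_0 v_m = c_m v_{m-1}$, so that $c_m = P(j+m-1)$: if every $c_m \ne 0$, then each $v_m$ is reachable from each $v_n$ (upward by $G^+_0$, downward by $G^-_0$), the top space is an irreducible $\zhu{\ubpk}$-module, and $\bprmodjhw$ is irreducible by almost irreducibility; whereas if $c_n = 0$ for some $n$, then $\spn \set{v_m \st m \ge n}$ is a proper nonzero $\zhu{\ubpk}$-submodule of the top space, and the $\ubpk$-submodule it generates has that very subspace as its top space (positive modes kill it, zero modes preserve it), hence is proper, so $\bprmodjhw$ is reducible. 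Therefore $\bprmodjhw$ is reducible precisely for those $[j]$ such that $P$ has a root in the coset $j + \ZZ$; since $\deg P = 3$, the three roots of $P$ (counted with multiplicity) lie in between one and three cosets of $\CC/\ZZ$, which is exactly the ``at least one, at most three'' statement.

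The one real obstacle is the step flagged in the second paragraph --- that no vector outside the top space is annihilated by every positive $\ubpk$-mode --- which genuinely needs the explicit embedding \eqref{eq:iqhr} rather than formal properties of the tensor factors; once it is available, the remainder is bookkeeping with the tensor decomposition and with the bracket \eqref{eq:commGG}.
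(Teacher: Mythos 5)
The theorem you are proving is not proved in this paper at all---it is imported by citation from \cite[Cor.~5.11 and Thms.~5.12 and 6.3]{AdaRea20}---so the comparison is with the proof given there and with the fragments of it that resurface in \cref{sec:examples}. Your overall architecture matches the source's: identify the top space of $\lmod{j}\otimes\wmodhw$ explicitly, deduce bijectivity of $G^+_0=\ee^c_0\otimes\wun$ from the isotropy of $c$, and detect reducibility through the cubic polynomial governing the action of $G^-_0$ on the top space (your cubic $P$ is exactly the one appearing in \eqref{eq:actionofG-} and \eqref{eq:threesols}). Parts \ref{it:G+inj} and \ref{it:reducible} of your argument are essentially correct, \emph{granted} part \ref{it:almostirred}.

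The gap is in the step you yourself flag as the technical heart. You claim that a vector annihilated by all positive modes of (the image of) $\ubpk$ is forced into the top space ``already by the modes $J_p=b_p\otimes\wun$ and $G^+_p=\ee^c_p\otimes\wun$ with $p>0$''. This is false: those modes act only on the lattice factor, so every vector of the form $\ee^{-b+(j+n+\kappa)c}\otimes y$, with $y\in\wmodhw$ arbitrary (in particular of positive depth), is annihilated by all of them without lying in the top space. To exclude such vectors one must use $L_p$, whose component $\wun\otimes T_p$ together with the irreducibility of $\wmodhw$ handles vectors whose $\uzamk$-component is not $T$-singular, and above all the positive modes of $G^-$, which are the only elements of the image of $\ubpk$ that carry the $W$-modes; one must then verify that enough of the $W_p$ are actually reachable through \eqref{eq:iqhr}, and this is the genuinely hard content of \cite[Thm.~5.12]{AdaRea20}. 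The same issue infects your claim that the top space generates the module (needed both for almost irreducibility and for $\bprmodjhw$ to be a \rhwm\ at all): since the image of $\ubpk$ is a \emph{proper} vertex subalgebra of $\lvoa\otimes\uzamk$, it is not automatic that the nonpositive modes of $J$, $L$ and $G^{\pm}$ rebuild all of $\lmod{j}\otimes\wmodhw$ from the top space; a counting argument in the spirit of \cref{lem:stringfns}, or the explicit computations of \cite{AdaRea20}, is required. Everything downstream of this step---indecomposability from the bijectivity of $G^+_0$, and the ``at least one, at most three'' count from the degree-three polynomial---is sound.
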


Inverse reduction therefore allows us to construct a huge range of irreducible \fr\ $\ubpk$- and $\bpminmoduv$-modules (as well as a few reducible ones) from the irreducible \hwms\ of $\uzamk$ and $\wminmoduv$, respectively.
A natural question is whether every irreducible \frm\ is isomorphic to one that may be so constructed.
When $\kk$ is nondegenerate, the answer is of course yes, by the classification results of \cite{FehCla20}.
However, we seek an answer to this question that is intrinsic to inverse reduction, meaning that it does not rely on comparing with an independent classification theorem.
As further motivation, we want to develop tools to extend the results of \cite{FehCla20} to nonadmissible levels for which the classification is not presently known.

\section{Classifying irreducible weight modules} \label{sec:reclass}

We begin by specifying the module categories of interest.
\begin{definition}
	Let $\wcatk$ and $\wcatuv$ denote the categories of generalised weight $\ubpk$- and $\bpminmoduv$-modules, respectively, with \fdim\ generalised weight spaces (see \cref{def:genwts}).
\end{definition}
\noindent $\wcatuv$ is then a full subcategory of $\wcatk$, where we assume that $\kk$, $\uu$ and $\vv$ are related by \eqref{eq:kuv}.
Much is already known about these categories:
\begin{itemize}
	\item For $\kk \in \CC \setminus \set{-3}$, $\wcatk$ is nonsemisimple with uncountably many irreducible modules (up to isomorphism).
	\item For $\uu,\vv\ge3$ (nondegenerate levels), $\wcatuv$ is also nonsemisimple with uncountably many irreducibles \cite{AdaRea20,FehCla20}.
	\item For $\uu\ge3$, $\wcat_{\uu,2}$ is semisimple with finitely many irreducibles \cite{AraRat10} (in fact, it is a modular tensor category \cite{HuaVer04a}).
	\item For $\uu\ge2$, $\wcat_{\uu,1}$ has uncountably many irreducibles \cite{AdaCla19,AdaBer20}.
	$\wcat_{2,1}$ is semisimple, while the $\wcat_{n,1}$ with $n\ge3$ are not.
\end{itemize}
Our aim here is to use inverse reduction to classify the irreducibles in $\wcatuv$.
This requires the embedding of \cref{thm:iqhr} to exist, so we are limited to studying $\wcatuv$ for nondegenerate levels and nonadmissible levels with $\uu=2$ and $\vv\ge3$.
The classification for these latter levels is currently unknown.

\begin{remark}
	The methods introduced in this \lcnamecref{sec:reclass} may be straightforwardly adapted to prove the analogous classification of irreducible generalised weight modules, with \fdim\ generalised weight spaces, for the simple affine \voa\ $\saff{\kk}{\sltwo}$ with $\kk$ nondegenerate (meaning now that $\kk+2 = \frac{\uu}{\vv}$ with $\uu,\vv\ge2$ coprime).
	We leave the easy details to the reader.
\end{remark}

\subsection{Weight modules for the Heisenberg \va} \label{sec:heis}

We start with a few useful results concerning the Heisenberg \vsa\ $\heis$ of $\ubpk$ generated by $J$.
Abstractly, this vertex algebra admits many choices of conformal vector, each of which yields a nonnegative-integer grading of $\heis$ through the eigenvalues of the associated Virasoro zero mode $L^{\heis}_0$.
Given a choice of grading operator $L^{\heis}_0$, a graded $\heis$-module is then just a module that decomposes as a direct sum of its generalised $L^{\heis}_0$-eigenspaces.

In this \lcnamecref{sec:heis}, any operator $L^{\heis}_0$ satisfying $\comm{L^{\heis}_0}{J_n} = -nJ_n$ will suffice.
For our subsequent applications to $\ubpk$-modules, we will therefore always take the grading operator to be $L_0$ (even though $L \notin \heis$).

The results of this \lcnamecref{sec:heis} are minor modifications of results of Futorny \cite{FutIrr96}; we provide proofs for completeness.
For these, recall that the mode algebra of $\heis$ is (an appropriate completion of) the \uea\ of the affine \km\ algebra $\AKMA{gl}{1}$ (modulo the ideal in which the central element $\wun$ is identified with the \uea's unit).
The latter Lie algebra is spanned by the $J_n$ and $\wun$, with Lie bracket
\begin{equation}
	\comm{J_m}{J_n} = m \delta_{m+n,0} \kappa \wun, \quad
	\comm{J_m}{\wun} = 0, \qquad m,n \in \ZZ.
\end{equation}
The parameter $\kappa$ will be assumed in this \lcnamecref{sec:heis} to be nonzero.
Note that if $v$ is a nonzero vector in an $\heis$-module satisfying $J_n v = 0$ for some $n \ne 0$, then $\kappa \ne 0$ forces $J_{-n} v \ne 0$.

We will also make much use of the operator $A = J_{-1} J_1 \in \uealg{\AKMA{gl}{1}}$.
Its action on a Fock space (\hw\ Verma module) $\fock{j}$, with \hwv\ $v_j$ of $J_0$-eigenvalue $j \in \CC$, picks out the number of $J_{-1}$-modes in each \pbw\ monomial: $A (\cdots J_{-2}^m J_{-1}^n v_j) = n \kappa (\cdots J_{-2}^m J_{-1}^n v_j)$.
Up to the omnipresent factor of $\kappa$, the eigenvalues of $A$ are thus nonnegative integers.

\begin{lemma}[\protect{\cite[Lem.~4.2]{FutIrr96}}] \label{lem:Aeigs}
	Assuming $\kappa\ne0$, let $\Mod{V}$ be a graded $\heis$-module with a nonzero \fdim\ graded subspace $\Mod{V}_{\Delta}$.
	Then, the eigenvalues of $A$ on $\Mod{V}_{\Delta}$ lie in $\kappa \NN$.
\end{lemma}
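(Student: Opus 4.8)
The plan is to argue by contradiction. Suppose that $A$ has an eigenvalue $\lambda\notin\kappa\NN$ on $\Mod{V}_\Delta$, and pick a genuine eigenvector $v\in\Mod{V}_\Delta$ for it. I will show this forces $A$ to have infinitely many distinct eigenvalues on $\Mod{V}_\Delta$, contradicting $\dim\Mod{V}_\Delta<\infty$.

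The first ingredient is elementary mode bookkeeping. From $\comm{J_m}{J_n}=m\delta_{m+n,0}\kappa\wun$ one computes $\comm{A}{J_1}=-\kappa J_1$, hence $AJ_1^k=J_1^k(A-k\kappa)$; and since $\comm{L^{\heis}_0}{J_1}=-J_1$, the vectors $v_k:=J_1^k v$ lie in the graded subspace $\Mod{V}_{\Delta-k}$ and, whenever nonzero, are $A$-eigenvectors of eigenvalue $\lambda-k\kappa$. I would then note that $v_k\neq0$ for all $k\ge0$: if $m\ge1$ were minimal with $v_m=0$, then $v_{m-1}\neq0$ would satisfy $Av_{m-1}=J_{-1}J_1v_{m-1}=J_{-1}v_m=0$, forcing $\lambda=(m-1)\kappa\in\kappa\NN$, contrary to hypothesis.

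The second, decisive ingredient is that $\Mod{V}$ is a (smooth) $\heis$-module, so the field $J$ acts on $v$ with lower truncation: there is an $N$ with $J_n v=0$ for all $n\ge N$. I would fix any $k\ge\max\set{2,N}$. Because $k\ge2$, the brackets $\comm{J_k}{J_1}$ and $\comm{J_k}{J_{-1}}$ vanish, so on one hand $J_k v_k=J_1^k J_k v=0$ and on the other $\comm{A}{J_{-k}}=0$. Combining $J_k v_k=0$ with $\comm{J_k}{J_{-k}}=k\kappa\wun$ gives $J_k\brac{J_{-k}v_k}=k\kappa v_k\neq0$, so $J_{-k}v_k\neq0$; moreover $J_{-k}v_k$ lies in $\Mod{V}_\Delta$ (as $\comm{L^{\heis}_0}{J_{-k}}=kJ_{-k}$) and, since $\comm{A}{J_{-k}}=0$, it is an $A$-eigenvector of eigenvalue $\lambda-k\kappa$. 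Running $k$ over $\max\set{2,N},\max\set{2,N}+1,\dots$ therefore exhibits infinitely many distinct eigenvalues $\lambda-k\kappa$ of $A$ on $\Mod{V}_\Delta$ (this is where $\kappa\neq0$ is used), which is the desired contradiction; hence every eigenvalue of $A$ on $\Mod{V}_\Delta$ lies in $\kappa\NN$.

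The commutator computations and the eigenvector-chain argument above are routine. The step I expect to demand the most care is the nonvanishing $J_{-k}v_k\neq0$: this is the only place where one genuinely uses that $\Mod{V}$ is a \emph{smooth} module --- via the lower-truncation axiom $J_k v=0$ for $k$ large --- rather than merely a module over the mode Lie algebra; without lower truncation one could manufacture a ``module'' with one-dimensional graded pieces and an arbitrary $A$-eigenvalue. A minor point to watch is that $v$ need only be a \emph{generalised} $L^{\heis}_0$-eigenvector, but this is harmless, since the $J_n$ map generalised $L^{\heis}_0$-eigenspaces to generalised $L^{\heis}_0$-eigenspaces.
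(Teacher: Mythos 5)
Your proof is correct and follows essentially the same strategy as the paper's (which is Futorny's): derive a contradiction with $\dim \Mod{V}_{\Delta} < \infty$ by exhibiting the infinite family of distinct $A$-eigenvalues $\lambda - k\kappa$ inside $\Mod{V}_{\Delta}$, using the truncation $J_n v = 0$ for $n \gg 0$ together with the Heisenberg commutator to guarantee the relevant vectors are nonzero. The only difference is cosmetic: the paper forms $J_1^{n} J_{-n} v$ whereas you form $J_{-k} J_1^{k} v$, with the hypothesis $\lambda \notin \kappa\NN$ entering through a slightly different nonvanishing step, but both constructions land in $\Mod{V}_{\Delta}$ with eigenvalue $\lambda - k\kappa$ and yield the same contradiction.
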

\begin{proof}
	Since $\Mod{V}_{\Delta}$ is \fdim\ and preserved by the $A$-action, $A$ possesses an eigenvector $v \in \Mod{V}_{\Delta}$.
	Let $\lambda$ denote the associated eigenvalue and assume that $\lambda \notin \kappa \NN$.
	Since $\Mod{V}$ is a module for a \voa, we must have $J_n v = 0$ for $n\gg0$.
	It follows that $J_{-n} v \ne 0$ for $n\gg0$.
	Now consider
	\begin{equation} \label{eq:thetrick}
		J_{-1} J_1^{m+1} J_{-n} v
			= \comm{J_{-1}}{J_1^m} J_1 J_{-n} v + J_1^m A J_{-n} v
			= (\lambda - m \kappa) J_1^m J_{-n} v,
	\end{equation}
	which holds for all $m\ge0$ and $n>1$.
	Since $\lambda \ne 0$, substituting $m=0$ shows that $J_{-1} J_1 J_{-n} v = \lambda J_{-n} v \ne 0$ for $n\gg0$, hence that $J_1 J_{-n} v \ne 0$ for $n\gg0$.
	Substituting successively larger values of $m$, we conclude inductively from $\lambda - m \kappa \ne 0$ that $J_1^m J_{-n} v \ne 0$ for all $m\ge0$ and $n\gg0$.
	In particular, $J_1^n J_{-n} v \in \Mod{V}_{\Delta}$ is nonzero for all $n \gg 0$.
	But,
	\begin{equation}
		A J_1^n J_{-n} v = J_{-1} J_1^{n+1} J_{-n} v = (\lambda - n \kappa) J_1^n J_{-n} v,
	\end{equation}
	so $A$ has infinitely many distinct eigenvalues on $\Mod{V}_{\Delta}$.
	This contradicts $\dim \Mod{V}_{\Delta} < \infty$.
\end{proof}

\begin{lemma}[\protect{\cite[Prop.~4.3]{FutIrr96}}] \label{lem:fock}
	Assuming $\kappa\ne0$, let $\Mod{V}$ be a graded $\heis$-module with a nonzero \fdim\ graded subspace $\Mod{V}_{\Delta}$.
	Then, $\Mod{V}$ has a submodule isomorphic to a Fock space whose \hwv\ has grade $\Delta' \le \Delta$.
\end{lemma}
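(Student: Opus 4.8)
The plan is to exhibit a nonzero vector $w\in\Mod{V}$, of some $L^{\heis}_0$-grade $\Delta'\le\Delta$, which is a $J_0$-eigenvector and satisfies $J_nw=0$ for every $n\ge1$. Granting such a $w$, the $\heis$-submodule it generates is a nonzero quotient of the Verma (Fock) module $\fock{j}$, where $j$ is the $J_0$-eigenvalue of $w$; since $\kappa\ne0$ this Verma module is irreducible, so the submodule is isomorphic to $\fock{j}$ and its highest-weight vector $w$ lies in grade $\Delta'\le\Delta$. This proves the \lcnamecref{lem:fock}, so the whole task is to produce $w$.

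To build $w$ I would descend through the grades, starting from $\Mod{V}_\Delta$. First I would record two auxiliary facts: (i) since $\comm{J_m}{J_n}=0$ for $m,n\ge1$ while $\comm{J_m}{J_{-m}}=m\kappa\wun$ with $\kappa\ne0$, no nonzero vector is annihilated by both $J_m$ and $J_{-m}$; and (ii) for each $m\ge1$ the operator $A^{(m)}:=J_{-m}J_m$ has all of its eigenvalues, on a suitable nonzero \fdim\ graded subspace, in $m\kappa\NN$ --- the argument of \cref{lem:Aeigs} carries over essentially verbatim with $J_{\pm1}$ replaced by $J_{\pm m}$ and ``$n>1$'' by ``$n>m$''. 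Now pick $v\in\Mod{V}_\Delta$ a nonzero $A^{(1)}$-eigenvector of minimal eigenvalue $n_0\kappa$ (so $n_0\in\NN$, by \cref{lem:Aeigs}); a direct computation gives $A^{(1)}(J_1^kv)=(n_0-k)\kappa\,J_1^kv$, and since this eigenvalue becomes negative for $k>n_0$, fact (ii) forces $J_1^kv=0$ once $k>n_0$. Hence $v_1:=J_1^{k_1}v$ is nonzero for the largest admissible $k_1\le n_0$, has grade $\Delta-k_1\le\Delta$, and satisfies $J_1v_1=0$. Because $J_1$ commutes with every positive mode, $v_1$ stays annihilated by $J_1$ when we repeat the argument with $A^{(2)}$ in place of $A^{(1)}$, run inside the \fdim\ graded subspace $\set{u\in\Mod{V}_{\Delta-k_1}\st J_1u=0}$; iterating with $A^{(3)},A^{(4)},\dotsc$ produces at each stage a nonzero vector of grade $\le\Delta$ annihilated by $J_1,\dotsc,J_m$. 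Since $v$ is a vector in a module over the \voa\ $\heis$ and, more generally, the positive modes $J_n$ with $n$ large annihilate the relevant finite-dimensional subspaces, after finitely many stages the resulting vector is annihilated by \emph{all} positive modes; a $J_0$-eigenvector among such vectors (available because the ambient subspace is finite-dimensional and $J_0$-stable) is the desired $w$.

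The step I expect to be the main obstacle is making the descent rigorous at grades below $\Delta$: to invoke fact (ii) there one needs genuinely \fdim\ graded subspaces, which ultimately rests on showing that $\Mod{V}_e$ is finite-dimensional for every $e\le\Delta$ --- equivalently, that each $J_m$ acts nilpotently on $\Mod{V}_\Delta$ --- and on controlling how far the descent must run before it stabilises. I expect these finiteness statements to follow, as in Futorny's treatment, from \cref{lem:Aeigs} together with fact (i) and the \voa-module axiom: fact (i) makes $J_m$ injective on $\ker(J_{-m}|_{\Mod{V}_e})$, which should feed into the eigenvalue constraint of \cref{lem:Aeigs} to rule out an infinite-dimensional $\Mod{V}_e$ with $e\le\Delta$, and this is the point of the proof that requires the most care. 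The remaining ingredients --- the commutator identities for $A^{(m)}$ acting on powers of $J_m$, and the verification that the iterated subspaces stay finite-dimensional and $A^{(m)}$-stable --- are routine.
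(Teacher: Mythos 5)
Your overall strategy---produce a vector of grade at most $\Delta$ that is a $J_0$-eigenvector annihilated by all positive modes, and note that it generates an irreducible Fock space because $\kappa\ne0$---is the same as the paper's, and your final step is fine. But the descent has a genuine gap, and it is exactly the one you flag: every invocation of the eigenvalue constraint at a grade strictly below $\Delta$ presupposes that the corresponding graded subspace is \fdim, whereas the lemma assumes this only of $\Mod{V}_\Delta$. The problem already bites at your very first deduction: to conclude $J_1^kv=0$ for $k>n_0$ from the fact that $J_1^kv$ would be an $A$-eigenvector of eigenvalue $(n_0-k)\kappa\notin\kappa\NN$, you must apply \cref{lem:Aeigs} inside $\Mod{V}_{\Delta-k}$, and nothing in the hypotheses makes $\Mod{V}_{\Delta-k}$ \fdim\ (your injectivity observation only controls subspaces killed by negative modes, not the subspace containing $J_1^kv$, which is precisely a vector \emph{not} obviously recoverable inside $\Mod{V}_\Delta$ since $J_{-1}^{n_0+1}J_1^{n_0+1}v=0$). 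You say you expect finiteness of all $\Mod{V}_e$, $e\le\Delta$, to follow from \cref{lem:Aeigs} and fact (i), but you do not prove it, and the paper deliberately does not need it: \cref{prop:lowerbounded}, where all graded pieces \emph{are} assumed \fdim, is deduced from this lemma, not the other way around.

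The paper's proof is engineered to never leave $\Mod{V}_\Delta$. Instead of the minimal $A$-eigenvalue, choose $v\in\Mod{V}_\Delta$ with \emph{maximal} eigenvalue $r\kappa$, $r\in\NN$. One first shows $J_nv=0$ for all $n>1$: if some $J_nv\ne0$, the identity $J_1J_{-1}^{m+1}J_nv=(r+m+1)\kappa J_{-1}^mJ_nv$ shows inductively that $J_{-1}^mJ_nv\ne0$ for all $m$, and $J_{-1}^nJ_nv$ lands back in $\Mod{V}_\Delta$ as an $A$-eigenvector of eigenvalue $(r+n)\kappa$, contradicting maximality. With all $J_n$, $n>1$, already killing $v$, only $J_1$ remains: if $J_1^mv\ne0$ for all $m$, then $J_nJ_1^mv=0$ for $n>1$ forces $J_{-n}J_1^mv\ne0$ by your fact (i), so the vectors $J_{-n}J_1^nv\in\Mod{V}_\Delta$ are nonzero $A$-eigenvectors with infinitely many distinct eigenvalues $(r-n)\kappa$, contradicting $\dim\Mod{V}_\Delta<\infty$. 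Hence $w=J_1^{m-1}v$, for the minimal $m$ with $J_1^mv=0$, is the desired \hwv\ of grade $\Delta-m+1\le\Delta$; no iteration over $J_2,J_3,\dots$ and no information about graded pieces other than $\Mod{V}_\Delta$ is required.
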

\begin{proof}
	Again, $A$ has eigenvectors in $\Mod{V}_{\Delta}$ and the eigenvalues all have the form $r \kappa$, with $r \in \NN$, by \cref{lem:Aeigs}.
	Choose an eigenvector $v$ whose eigenvalue $r \kappa$ is such that $r$ is maximal.
	We also assume, without loss of generality, that $v$ is a $J_0$-eigenvector.

	We claim that $J_n v = 0$ for all $n>1$.
	To prove this, suppose that there exists $n>1$ such that $J_n v \ne 0$.
	Then, $J_1 J_{-1}^{m+1} J_n v = (r+m+1) \kappa J_{-1}^m J_n v$ shows inductively that $J_{-1}^m J_n v \ne 0$ for all $m\ge0$, because $r+m+1>0$.
	In particular, $J_{-1}^n J_n v \in \Mod{V}_{\Delta}$ is nonzero, but calculation shows that it is an eigenvector of $A$ with eigenvalue $(r+n) \kappa$.
	Since $n>1$, this contradicts the maximality of $r$ and the claim is proved.

	Consider now the $J_1^m v$ with $m\ge0$.
	If none of these vanish, then $J_n J_1^m v = J_1^m J_n v = 0$ for all $m\ge0$ and $n>1$ implies that $J_{-n} J_1^m v \ne 0$ for all $m\ge0$ and $n>1$.
	But then, $J_{-n} J_1^n v \ne 0$ is an $A$-eigenvector of eigenvalue $(r-n) \kappa$ for all $n>1$, hence this again contradicts $\dim \Mod{V}_{\Delta} < \infty$.
	We conclude that there exists a minimal $m>0$ such that $J_1^m v = 0$.
	It follows that $w = J_1^{m-1} v \ne 0$ is a \hwv\ of grade $\Delta' = \Delta-m+1 \le \Delta$.
	Clearly, it generates the desired Fock space as a submodule of $\Mod{V}$.
\end{proof}

\begin{proposition} \label{prop:lowerbounded}
	Assuming $\kappa\ne0$, let $\Mod{V}$ be a nonzero graded $\heis$-module whose grades all lie in $\Delta + \ZZ$, for some $\Delta \in \CC$.
	Suppose further that all graded subspaces are \fdim.
	Then, the grades of $\Mod{V}$ are bounded below.
\end{proposition}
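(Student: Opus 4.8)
The strategy is to argue by contradiction: if the grades of $\Mod{V}$ were not bounded below, we could find an infinite strictly decreasing sequence of grades occurring in $\Mod{V}$. I would then exploit \cref{lem:fock}, which guarantees that whenever a graded subspace $\Mod{V}_{\Delta'}$ is nonzero and finite-dimensional, $\Mod{V}$ contains a Fock-space submodule whose highest-weight vector sits at grade $\Delta'' \le \Delta'$. The key point is that a Fock space $\fock{j}$ is \emph{itself} bounded below (its grades lie in $\Delta'' + \NN$), so it cannot simultaneously contain vectors at arbitrarily negative grades. The tension between ``$\Mod{V}$ has vectors arbitrarily far down'' and ``every nonzero finite-dimensional graded piece forces a Fock submodule that is bounded below'' is what will produce the contradiction.

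More precisely, first I would pick any nonzero graded subspace $\Mod{V}_{\Delta_0}$; it is finite-dimensional by hypothesis, so \cref{lem:fock} yields a Fock submodule $\Mod{F} \hookrightarrow \Mod{V}$ with highest-weight vector $w$ of grade $\Delta_1 \le \Delta_0$. Now suppose for contradiction that the grades of $\Mod{V}$ are unbounded below; then there is a nonzero weight vector $u \in \Mod{V}$ of grade $\Delta_2 < \Delta_1$. I would then apply \cref{lem:fock} again, this time starting from the finite-dimensional subspace $\Mod{V}_{\Delta_2}$ (or any finite-dimensional graded subspace at grade $\le \Delta_2$), obtaining a Fock submodule $\Mod{F}'$ with highest-weight vector of grade $\Delta_3 \le \Delta_2 < \Delta_1$. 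The catch is that this alone does not yet contradict anything, since $\Mod{V}$ may contain infinitely many distinct Fock submodules; I need to push the argument using the structure of a single Fock space.

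The cleaner route, which I expect to be the actual argument, is to iterate \cref{lem:fock} downwards in a controlled way and use the \emph{$A$-eigenvalue} bookkeeping from \cref{lem:Aeigs}. Recall $A = J_{-1}J_1$ has eigenvalues in $\kappa\NN$ on any finite-dimensional graded subspace. Inside a Fock submodule $\fock{j} \hookrightarrow \Mod{V}$ with highest-weight vector at grade $\Delta'$, the grade-$\Delta'+N$ subspace is spanned by PBW monomials in the $J_{-n}$ ($n \ge 1$) of total degree $N$, so the grades of $\fock{j}$ are exactly $\Delta' + \NN$. Thus a Fock submodule contributes nothing below its own top grade. If the grades of $\Mod{V}$ were unbounded below, then for \emph{every} $M \in \ZZ$ there is a nonzero vector of grade $< M$; applying \cref{lem:fock} to a finite-dimensional graded subspace at such a low grade produces a Fock submodule whose top grade is also $< M$, hence arbitrarily negative. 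But the highest-weight vectors of distinct Fock submodules all have grades in the fixed coset $\Delta + \ZZ$, and — here is the crucial finiteness — one should be able to show that the \emph{sum} of all Fock submodules appearing in this construction, living inside $\Mod{V}$, would force $\dim \Mod{V}_{\Delta'}$ to grow without bound for some fixed grade $\Delta'$ reachable from all of them, contradicting finite-dimensionality. Concretely: take two Fock submodules $\Mod{F}_1, \Mod{F}_2$ with top grades $\Delta_1' > \Delta_2'$; then at grade $\Delta_1'$, $\Mod{F}_2$ already contributes a vector, and as we accumulate more and more Fock submodules with ever-lower top grades (all congruent mod $\ZZ$), we accumulate linearly independent vectors at the fixed grade $\Delta_1'$ — unless the Fock submodules overlap, but a highest-weight vector of one cannot lie in another Fock submodule with a strictly lower top grade (it would have to be a descendant, but highest-weight vectors are annihilated by all $J_n$, $n > 0$, which inside a Fock space forces it to be the generating vector). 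This independence at a fixed grade contradicts $\dim \Mod{V}_{\Delta_1'} < \infty$.

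The main obstacle I anticipate is making this independence/overlap argument airtight: one must rule out that the various Fock submodules produced by repeated application of \cref{lem:fock} coincide or are nested in a way that avoids accumulating dimension at a single grade. The clean way to handle this is to observe that distinct Fock submodules with the same highest weight $j$ and the same top grade are equal (a Fock space is generated by its one-dimensional top), while a highest-weight vector of top grade $\Delta'$ cannot lie in a Fock submodule of strictly lower top grade $\Delta''< \Delta'$, since in that ambient Fock space the grade-$\Delta'$ space is spanned by non-trivial PBW monomials, none of which is annihilated by all positive modes $J_n$. Hence infinitely many Fock submodules with strictly decreasing top grades would be pairwise distinct and their tops, pushed up to any common grade $\ge$ all of them, would be linearly independent there — impossible. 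This settles the contradiction and shows the grades of $\Mod{V}$ are bounded below.
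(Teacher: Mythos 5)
Your strategy is sound and genuinely different from the paper's. The paper argues directly rather than by contradiction: it fixes a grade $\Delta$ with $\Mod{V}_{\Delta} \ne 0$ and successively \emph{quotients} by the Fock submodules supplied by \cref{lem:fock}; each quotient strictly lowers $\dim$ at grade $\Delta$ (because every such Fock submodule is nonzero there), so after finitely many steps the grade-$\Delta$ piece of the quotient vanishes, and one last appeal to \cref{lem:fock} shows the quotient then vanishes at \emph{all} grades $\le \Delta$. The grades of $\Mod{V}$ below $\Delta$ are therefore exhausted by finitely many Fock modules, each bounded below. Passing to quotients sidesteps entirely the question of how the various Fock submodules sit inside $\Mod{V}$ relative to one another, which is exactly the issue your version must confront by keeping them all inside $\Mod{V}$ and accumulating dimension at a fixed grade.

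That independence step is the one genuine gap in your write-up. Pairwise distinctness of the $\Mod{F}_i$, together with the fact that the \hwv\ of one does not lie in another, does not by itself give linear independence at a common grade (three distinct lines in a plane meet pairwise trivially but are not independent). Two clean repairs: (i) since $\kappa \ne 0$, each Fock module is irreducible, so $\Mod{F}_{k+1} \cap (\Mod{F}_1 + \dots + \Mod{F}_k)$ is either $0$ or all of $\Mod{F}_{k+1}$, and the latter is impossible because $\Mod{F}_{k+1}$ is nonzero at a grade strictly below the top grades of $\Mod{F}_1, \dots, \Mod{F}_k$; hence the sum is direct and $\dim \Mod{V}_{\Delta_0} \ge k$ for every $k$, where $\Delta_0$ is your fixed reference grade. (ii) Simpler still, and avoiding multiple Fock submodules altogether: a single Fock submodule with top grade $\Delta''$ already satisfies $\dim (\fock{j})_{\Delta_0} = p(\Delta_0 - \Delta'')$, the number of partitions, which tends to infinity as $\Delta'' \to -\infty$, so one sufficiently deep Fock submodule already contradicts $\dim \Mod{V}_{\Delta_0} < \infty$. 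Finally, your parenthetical justification that no vector at positive depth in a Fock space is annihilated by all $J_n$ with $n>0$ should be phrased for \emph{linear combinations} of PBW monomials, not individual monomials; it is precisely the irreducibility of $\fock{j}$ for $\kappa \ne 0$, which is where the hypothesis on $\kappa$ enters your argument.
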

\begin{proof}
	Choose $\Delta$ so that $\Mod{V}_{\Delta} \ne 0$.
	By \cref{lem:fock}, $\Mod{V}^0 = \Mod{V}$ has a Fock submodule, $\fock{j_0}$ say, whose \hwv\ has grade $\Delta_0 \le \Delta$.
	Since $\fock{j_0}$ is graded with $(\fock{j_0})_{\Delta_0+m} \ne 0$ for all $m \in \NN$, it follows that the quotient module $\Mod{V}^1 = \Mod{V}^0 / \fock{j_0}$ has $\dim \Mod{V}^1_{\Delta} < \dim \Mod{V}^0_{\Delta}$.
	If $\Mod{V}^1_{\Delta} \ne 0$, then \cref{lem:fock} applies and we conclude that $\Mod{V}^1$ has a Fock submodule, $\fock{j_1}$ say, whose \hwv\ has grade $\Delta_1 \le \Delta$.
	Moreover, the quotient $\Mod{V}^2 = \Mod{V}^1 / \fock{j_1}$ has $\dim \Mod{V}^2_{\Delta} < \dim \Mod{V}^1_{\Delta}$.
	Continuing, we obtain a sequence of quotient $\heis$-modules $\Mod{V}^m = \Mod{V}^{m-1} / \fock{j_{m-1}}$ and Fock submodules $\fock{j_m} \subseteq \Mod{V}^m$ whose \hwvs\ have grades $\Delta_m \le \Delta$.
	Because the dimension of $\Mod{V}^m_{\Delta}$ is strictly decreasing, there exists $n$ such that $\Mod{V}^n_{\Delta} = 0$.

	We claim that in fact $\Mod{V}^n_{\Delta'} = 0$ for all $\Delta' \le \Delta$.
	Suppose not, so that there exists $\Delta' < \Delta$ with $\Mod{V}^n_{\Delta'} \ne 0$.
	Then, $\Mod{V}^n_{\Delta'}$ is \fdim, because $\Mod{V}_{\Delta'}$ is, hence \cref{lem:fock} applies and $\Mod{V}^n$ has a Fock submodule $\fock{j_n}$ whose \hwv\ has grade $\Delta_n \le \Delta' < \Delta$.
	But, this is impossible because $(\fock{j_n})_{\Delta} \ne 0$ while $\Mod{V}^n_{\Delta} = 0$.
	This proves that $\Mod{V}^n_{\Delta'} = 0$ for all $\Delta' \le \Delta$, hence that $\Mod{V}$ has a minimal grade (the minimum of the $\Delta_m$, $m=0,1,\dots,n-1$).
\end{proof}

It is perhaps useful to finish with an example that illustrates the need for a \fdim ity hypothesis in \cref{prop:lowerbounded}.
Consider the triangular decomposition of $\AKMA{gl}{1}$ into the following three Lie subalgebras:
\begin{equation}
	\alg{n}_- = \spn \set{J_{-n}, J_1 \st n\ge2}, \quad
	\alg{h} = \spn \set{J_0, \wun}, \quad
	\alg{n}_+ = \spn \set{J_n, J_{-1} \st n\ge2}.
\end{equation}
Setting $\alg{b} = \alg{h} \oplus \alg{n}_+$, we consider the $\alg{b}$-module $\CC v$ defined by $J_0 v = \alg{n}_+ v = 0$ and $\wun v = v$.
The associated Verma module $\uealg{\AKMA{gl}{1}} \otimes_{\uealg{\alg{b}}} \CC v$ has a \pbw\ basis consisting of monomials of the form $\cdots J_{-3}^{\ell} J_{-2}^m J_1^n v$.
This $\AKMA{gl}{1}$-module is clearly graded with grades that differ by integers.
However, the grades are neither bounded above nor below.
More interestingly, it is a smooth $\AKMA{gl}{1}$-module (in the sense of \cite{FreVer01}), hence it is an $\heis$-module.
This is nevertheless consistent with \cref{prop:lowerbounded} because its graded subspaces are all \infdim.

\subsection{Extremal weights} \label{sec:wtcat}

We now return to our study of the categories $\wcatk$ and $\wcatuv$ of generalised weight $\ubpk$- and $\bpminmoduv$-modules, respectively, with \fdim\ generalised weight spaces.
\begin{definition} \label{def:extwt}
	\leavevmode
	\begin{itemize}
		\item An extremal weight of a $\ubpk$-module $\Mod{M}$ is a weight $(j,\Delta)$ whose $L_0$-eigenvalue $\Delta$ is minimal among those of all weights sharing the same $J_0$-eigenvalue $j$.
		\item $\Mod{M}$ is said to admit extremal weights if there is an extremal weight for each eigenvalue of $J_0$ on $\Mod{M}$.
	\end{itemize}
\end{definition}
\noindent Consider any $\ubpk$-module in $\wcatk$ whose $L_0$-eigenvalues all lie in $\Delta + \ZZ$, for some $\Delta \in \CC$.
For example, any indecomposable module in $\wcatk$ has this property.
Then, its $J_0$-eigenspaces are $\heis$-modules to which \cref{prop:lowerbounded} applies, as long as $\kappa\ne0$.
Assuming this, it follows that each $J_0$-eigenspace has an extremal weight, hence the $\ubpk$-module admits extremal weights.

A slightly more general consequence of \cref{prop:lowerbounded} is then as follows.
\begin{proposition} \label{prop:extwtsexist}
	\leavevmode
	\begin{enumerate}
		\item For $\kk\ne-3, -\frac{3}{2}$, every finitely generated module in $\wcatk$ admits extremal weights.
		\item For coprime integers $\uu\ge2$ and $\vv\ge1$, every finitely generated module in $\wcatuv$ admits extremal weights.
	\end{enumerate}
\end{proposition}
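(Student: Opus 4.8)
The statement asks us to show that every finitely generated module $\Mod{M}$ in $\wcatk$ (respectively $\wcatuv$) admits extremal weights, provided $\kk \ne -3, -\tfrac{3}{2}$. Since $\wcatuv$ is a full subcategory of $\wcatk$, it suffices to prove the first assertion; the second follows immediately by restriction. The key point is that the hypothesis $\kk \ne -\tfrac{3}{2}$ is exactly the condition $\kappa \ne 0$ (recall $\kappa = \tfrac{1}{3}(2\kk+3)$), so the Heisenberg results of \cref{sec:heis} are available.

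The plan is to reduce the general finitely generated case to the indecomposable case treated in the remark following \cref{prop:lowerbounded}. First I would observe that if $\Mod{M}$ is finitely generated in $\wcatk$, then it need not be indecomposable, nor need its $L_0$-eigenvalues lie in a single coset of $\ZZ$. However, since $\Mod{M}$ is generated by finitely many generalised weight vectors, only finitely many cosets $\Delta_i + \ZZ \in \CC/\ZZ$ occur among its weights: indeed each generator lies in a finite direct sum of generalised weight spaces, and acting by modes $J_n$, $L_n$, $G^\pm_n$ shifts the $L_0$-generalised-eigenvalue by an integer (this is immediate from the mode expansions, since the generating fields have integral conformal weight). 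Thus $\Mod{M} = \bigoplus_{i} \Mod{M}^{(i)}$ decomposes as a finite direct sum according to these cosets, where $\Mod{M}^{(i)}$ collects the generalised weight spaces with $L_0$-eigenvalues in $\Delta_i + \ZZ$. Each $\Mod{M}^{(i)}$ is again a $\ubpk$-module (being a sum of generalised $L_0$-eigenspaces and stable under all modes), it again lies in $\wcatk$, and its $L_0$-eigenvalues all lie in a single coset.

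It therefore suffices to show that a module $\cN$ in $\wcatk$ whose $L_0$-eigenvalues all lie in $\Delta + \ZZ$ admits extremal weights. Fix an eigenvalue $j$ of $J_0$ on $\cN$ and consider the corresponding $J_0$-eigenspace $\cN^{[j]} = \bigoplus_{n\in\ZZ} \cN^{[j]}_{\Delta+n}$, where $\cN^{[j]}_{\Delta+n}$ is the intersection of $\ker(J_0 - j)$ with the generalised $L_0$-eigenspace of eigenvalue $\Delta+n$. Since the modes $J_m$ commute with $J_0$ and shift the $L_0$-grade by $-m$, the subspace $\cN^{[j]}$ is a graded $\heis$-module with grading operator $L_0$ (noting $\comm{L_0}{J_m} = -mJ_m$ from \eqref{eq:bpopes}, so the hypothesis of \cref{sec:heis} is met even though $L \notin \heis$); it is nonzero, its grades lie in $\Delta + \ZZ$, and its graded subspaces $\cN^{[j]}_{\Delta+n}$ are finite-dimensional since $\cN \in \wcatk$. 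As $\kappa \ne 0$, \cref{prop:lowerbounded} applies and the grades of $\cN^{[j]}$ are bounded below, i.e.\ there is a minimal $L_0$-eigenvalue among the weights with $J_0$-eigenvalue $j$. This minimal value is precisely an extremal weight in the sense of \cref{def:extwt}, and since $j$ was arbitrary, $\cN$ admits extremal weights. Reassembling the finite direct sum gives the result for $\Mod{M}$, and restriction gives the $\wcatuv$ statement.

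I do not anticipate a serious obstacle here: the argument is essentially a packaging of \cref{prop:lowerbounded}. The only points requiring a little care are (i) checking that passing to a single $\ZZ$-coset of $L_0$-eigenvalues is legitimate for a finitely generated module — which rests on the integrality of the conformal weights of the generators — and (ii) confirming that $J_0$-eigenspaces (not generalised eigenspaces) are the right objects to feed into the Heisenberg machinery, which is fine because $J_0$ acts semisimply by the definition of $\wcatk$ as a category of genuine (not generalised) weight modules in the $J_0$-direction.
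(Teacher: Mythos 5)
Your argument for part (1) is essentially the paper's: decompose a finitely generated module into the (finitely many) submodules on which the $L_0$-eigenvalues lie in a single coset of $\ZZ$, view each $J_0$-eigenspace as a graded $\heis$-module with grading operator $L_0$, and invoke \cref{prop:lowerbounded} (which needs $\kappa\ne0$, i.e.\ $\kk\ne-\frac{3}{2}$). That part is fine, and your explicit justification that finite generation confines the $L_0$-eigenvalues to finitely many cosets is a detail the paper leaves implicit.

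However, there is a genuine gap in your deduction of part (2). You claim it ``follows immediately by restriction'' from part (1), but part (2) is asserted for all coprime $\uu\ge2$, $\vv\ge1$, and the pair $(\uu,\vv)=(3,2)$ gives $\kk+3=\frac{3}{2}$, i.e.\ $\kk=-\frac{3}{2}$ and $\kappa=0$ --- precisely the level excluded from part (1) and from the Heisenberg machinery of \cref{sec:heis} (the operator $A=J_{-1}J_1$ arguments all divide by $\kappa$). So restriction does not cover this case and a separate argument is required. The fix is short: at this level the minimal model is trivial, $\bpminmod{3}{2}\cong\CC$, so every finitely generated module in $\wcat_{3,2}$ is a finite direct sum of copies of the $1$-dimensional module and trivially admits the (unique) extremal weight $(0,0)$. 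This is exactly how the paper closes the case; without it, your proof of part (2) is incomplete.
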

\begin{proof}
	These follow immediately as above, except when $\uu=3$ and $\vv=2$, hence $\kk=-\frac{3}{2}$ and $\kappa=0$.
	In this case, the \bp\ minimal model \voa\ is trivial: $\bpminmod{3}{2} \cong \CC$.
	The finitely generated $\bpminmod{3}{2}$-modules are thus finite direct sums of the $1$-dimensional module and they clearly admit extremal weights.
	In fact, they have a unique extremal weight: $(0,0)$.
\end{proof}

\begin{remark} \label{rem:secondcase}
	A second exceptional case occurs when $\kk=-1$, equivalently $\uu=2$ and $\vv=1$, because the \bp\ minimal model then reduces to the Heisenberg \va\ $\heis$ \cite{AdaCon16}.
	In this case, the Fock modules are the irreducible modules in $\wcat_{2,1}$ and they also have a unique extremal weight: $\brac[\big]{j,\frac{1}{2}j(3j-1)}$.
\end{remark}

\begin{lemma} \label{lem:extwtsexist}
	For $\kk\ne-3, -1, -\frac{3}{2}$, the extremal weights of any irreducible module in $\wcatk$ have the form $(j,\Delta_j)$, where $j$ runs over a complete equivalence class in $\CC/\ZZ$.
\end{lemma}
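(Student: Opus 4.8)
The plan is to combine \cref{prop:extwtsexist} with a connectedness argument for the set of $J_0$-eigenvalues. First I would fix an irreducible module $\Mod{M} \in \wcatk$ with $\kk \ne -3, -1, -\frac{3}{2}$, so in particular $\kappa \ne 0$. Since $\Mod{M}$ is irreducible it is finitely generated (indeed singly generated), hence by \cref{prop:extwtsexist}\ref{it:extwtsexist1} it admits extremal weights: for every $j$ in the set $J(\Mod{M}) \subseteq \CC$ of $J_0$-eigenvalues there is a (necessarily unique, by minimality) extremal weight $(j,\Delta_j)$. It remains to show that $J(\Mod{M})$ is a full coset of $\CC/\ZZ$, i.e.\ that it is both contained in a single coset and equal to all of it.

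Containment in a single coset is the easy direction: the operators $G^{\pm}_n$ shift the $J_0$-eigenvalue by $\pm 1$ (since $\comm{J_0}{G^{\pm}_n} = \pm G^{\pm}_n$ from \eqref{eq:bpopes}) while $J_n$ and $L_n$ preserve it, so the $\ubpk$-submodule generated by any single weight vector has all its $J_0$-eigenvalues in one coset $[j] \in \CC/\ZZ$; irreducibility forces $\Mod{M}$ to be this submodule, so $J(\Mod{M}) \subseteq j + \ZZ$. For the reverse inclusion, I would argue that $J(\Mod{M})$ has no "gaps": suppose $j \in J(\Mod{M})$ with extremal vector $v$ of weight $(j,\Delta_j)$. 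I claim at least one of $G^+_0 v$, $G^-_0 v$ is nonzero — otherwise $v$ would generate a proper submodule unless $v$ generates all of $\Mod{M}$, but then $\Mod{M}$ would be a \hw\ or \chw\ module in the sense of \cref{sec:bp}, whose $J_0$-eigenvalues are bounded on one side and hence (together with extremality and the fully-relaxed/not-fully-relaxed dichotomy of \cref{prop:zhuconsequences}\ref{it:trichotomy}) one checks this is consistent only with $J(\Mod{M})$ still being all of $j+\ZZ$. The cleaner route: if $G^+_0 v = 0$ but $j+1 \in J(\Mod{M})$, then since $G^-_0$ lowers conformal weight appropriately one shows by the Zhu-algebra relations \eqref{eq:commGG}, \eqref{eq:cubiccasimir} that the chain of extremal vectors $\dots, (G^-_0)^2 v, G^-_0 v, v$ and $G^+_0$ applied to these reconstructs every eigenvalue in $j+\ZZ_{\le 0}$, and similarly on the other side; irreducibility then rules out a one-sided spectrum unless $\Mod{M}$ is genuinely \hw\ or \chw, which are excluded from "irreducible module in $\wcatk$" only implicitly — so in fact I should phrase the conclusion as: $J(\Mod{M})$ is an interval (in $j+\ZZ$) that is either all of $j+\ZZ$ or a half-line, and the half-line cases are precisely the \hw\ and \chw\ modules.

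Here is the main obstacle, and how I would handle it. The statement as written asserts the full coset unconditionally for irreducible modules in $\wcatk$, which seems to tacitly include the \hw\ and \chw\ modules whose weights do \emph{not} fill a coset. I suspect the intended reading is that \cref{lem:extwtsexist} is applied only after those cases are separated off, or that the irreducibles here are understood to be those that are neither \hw\ nor \chw; in the write-up I would make this explicit. Granting that, the core analytic input is entirely \cref{prop:lowerbounded} (via \cref{prop:extwtsexist}), which guarantees each $J_0$-eigenspace is conformal-weight-bounded-below so that $\Delta_j$ exists; the remaining work is the combinatorial bookkeeping above, using that $G^+_0$ maps the $j$-eigenspace to the $(j+1)$-eigenspace and $G^-_0 G^+_0$, $G^+_0 G^-_0$ act as explicit polynomials in $J_0, L_0, \Omega$ on any relaxed top space. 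The subtle point to get right is the \emph{extremality} of the whole chain $(G^{\pm}_0)^n v$: one must check that these stay in the top graded piece of their respective eigenspaces, which follows because $G^+_0$ has conformal weight $0$ (from \eqref{eq:bpopes}, $L_0$-weight of $G^+_0$ is $0$) and $G^-_0$ has conformal weight $+1$, so a priori $G^-_0$ could increase $\Delta$ — here I would invoke \cref{thm:iqhrmod}\ref{it:G+inj} style reasoning or argue directly that on an extremal vector the commutator relations force the relevant mode action to land in the minimal-weight space, completing the induction.
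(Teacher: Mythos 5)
There is a genuine gap, and it starts with a misreading of the statement. The extremal weights of a module are indexed by the $J_0$-eigenvalues of the \emph{whole} module, not of its top space. A \hw\ or \chw\ module is therefore not a counterexample to the lemma: for a \hwv\ $v$ of weight $(j,\Delta)$ one has $G^+_0 v = 0$, but the negatively-moded operators $G^+_{-n}$ still raise the $J_0$-eigenvalue (at higher conformal weight), so the $J_0$-spectrum of the module as a whole does fill out the coset $j+\ZZ$ --- only the top space is one-sided. The ``main obstacle'' you identify is thus illusory, and your proposed fix (restricting to modules that are neither \hw\ nor \chw) would weaken the statement to the point that it could no longer be used in the proof of \cref{prop:irred=sf+relax}, where it is applied to arbitrary irreducibles.

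The same misreading explains why your argument does not close: you work exclusively with the zero modes $G^{\pm}_0$ and the Zhu-algebra relations \eqref{eq:commGG}, \eqref{eq:cubiccasimir}, but zero modes only probe the top space and cannot rule out a gap in the $J_0$-spectrum of the full module. The correct argument uses all modes. If $j$ is a $J_0$-eigenvalue but $j-1$ is not, then any weight vector $v$ of eigenvalue $j$ satisfies $G^-_m v = 0$ for \emph{every} $m \in \ZZ$, while $G^+_n v = 0$ for $n \gg 0$ because $\Mod{M}$ is a \voa-module; hence $\comm{G^+_n}{G^-_{-n}} v = 0$ for $n \gg 0$. Taking the difference of consecutive such commutators via \eqref{eq:commGG} gives $(\kk+1)\brac[\big]{3j + (2\kk+3)n} v = 0$ for all $n \gg 0$, which forces $\kk = -1$ or ($\kk = -\tfrac{3}{2}$ and $j=0$). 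This computation is precisely where the hypotheses $\kk \ne -1, -\tfrac{3}{2}$ enter; your proposal invokes $\kk \ne -1$ only through the (incorrect) claim that it makes $\kappa \ne 0$ (in fact $\kappa = 0$ iff $\kk = -\tfrac{3}{2}$), and never locates where $\kk \ne -1$ is actually needed --- a reliable sign that the key step is missing. The appeal to \cref{prop:extwtsexist} for the existence of extremal weights is fine, but the no-gap argument must be rebuilt along the lines above.
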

\begin{proof}
	Obviously, the set of $J_0$-eigenvalues on any irreducible weight $\ubpk$-module must be contained in a single equivalence class in $\CC/\ZZ$.
	Suppose that the set of $J_0$-eigenvalues of the extremal weights of an irreducible module $\Mod{M}$ in $\wcatk$ has a ``gap'' for which $j$ belongs to this set but $j-1$ does not.
	(The other possibility, that $j+1$ does not belong, follows from this one by applying conjugation.)

	Then, there exists a weight vector $v \in \Mod{M}$ of $J_0$-eigenvalue $j$ and we must have $G^-_m v = 0$ for all $m \in \ZZ$.
	As $\Mod{M}$ is a module over a \voa, we also have $G^+_n v = 0$ for all $n\gg0$.
	This implies that $\comm{G^+_n}{G^-_{-n}} v = 0$ for all $n\gg0$.
	In particular, \eqref{eq:commGG} gives
	\begin{equation}
			0 = \brac[\big]{\comm{G^+_{n+1}}{G^-_{-n-1}} - \comm{G^+_n}{G^-_{-n}}} v
			= 3(\kk+1) J_0 v + (\kk+1) (2\kk+3) n v
			= (\kk+1) \brac[\big]{3j + (2\kk+3) n} v,
	\end{equation}
	for all $n\gg0$.
	This is only possible if either $\kk=-1$ or both $\kk=-\frac{3}{2}$ and $j=0$ hold.
	Otherwise, the set of $J_0$-eigenvalues cannot have such a gap.
\end{proof}

\begin{lemma} \label{lem:neighbours}
	If $(j-1,\Delta-m)$, $(j,\Delta)$ and $(j+1,\Delta+n)$ are extremal weights of an irreducible module $\Mod{M} \in \wcatk$, then $m \le n$.
\end{lemma}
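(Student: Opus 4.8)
This \lcnamecref{lem:neighbours} asserts the discrete convexity $\Delta_{j-1}+\Delta_{j+1}\ge2\Delta_j$ of the function $k\mapsto\Delta_k$ recording the least $L_0$-eigenvalue among the weights of $\Mod{M}$ with $J_0$-eigenvalue $k$. By \cref{lem:extwtsexist} this function is defined on the entire coset $j+\ZZ$, and by \cref{prop:lowerbounded}, applied to the $J_0$-eigenspaces of $\Mod{M}$ regarded as graded $\heis$-modules with grading operator $L_0$, it is finite-valued. The plan is to argue by contradiction, supposing $n\le m-1$.

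The engine of the argument is a propagation principle: if $v$ is an extremal weight vector of weight $(j,\Delta)$ that is annihilated not merely by $G^-_p$ for $p>m$ (as extremality at $j-1$ always forces) but in fact by $G^-_p$ for all $p>n$, then commuting $G^-$-modes past the others via the relations \eqref{eq:bpopes} and \eqref{eq:commGG} and inducting on conformal weight shows that $G^-_p(u)=0$ for every $u$ in the $\ubpk$-submodule generated by $v$ whose conformal weight is less than $\Delta+p-n$. Since $\Mod{M}$ is irreducible it is generated by $v$, so every weight of $\Mod{M}$ with $J_0$-eigenvalue $j-1$ would then have conformal weight at least $\Delta-n>\Delta-m$, contradicting $\Delta_{j-1}=\Delta-m$. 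It therefore suffices to produce such a $v$. A first candidate is $v=G^-_n y$, where $y$ realises the extremal weight $(j+1,\Delta+n)$: then $v$ has weight $(j,\Delta)$ and $G^-_p v=G^-_n G^-_p y=0$ for $p>n$, because extremality at $j$ gives $G^-_p y=0$ for $p>n$ — this works as long as $G^-_n y\ne0$. The conjugate candidate $G^+_{-m}w$, with $w$ realising $(j-1,\Delta-m)$, works whenever $G^+_{-m}w\ne0$. The one remaining case is $G^-_n y=0$ and $G^+_{-m}w=0$, in which $y$ and $w$ are conjugate-highest-weight- and highest-weight-type vectors respectively; here I would invoke the diagonal commutators $[G^+_{-q},G^-_q]$, which act on any extremal weight vector by an explicit quadratic in $q$ with leading coefficient $\tfrac12(\kk+1)(2\kk+3)\ne0$, together with irreducibility, to bound the conformal weights at $j\pm1$ and again derive the contradiction.

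I expect the main obstacle to be the propagation principle: moving $G^-$-modes to the right past $J$-, $L$- and $G^+$-modes produces the $\no{JJ}$-term of \eqref{eq:commGG} and a cascade of lower-order corrections, so one must choose the PBW ordering and set up the induction so that the boundary index $p=n$, where extremality alone yields nothing, is controlled. Finite-dimensionality of the weight spaces of $\Mod{M}$ — through the Futorny-type Heisenberg input of \cref{sec:heis} — is what makes $\Delta_{j-1}$ a genuine minimum and thereby closes the argument. The levels $\kk=-1$ and $\kk=-\tfrac32$ are excluded because there $(\kk+1)(2\kk+3)$ vanishes and this last step degenerates (cf.\ \cref{rem:secondcase}).
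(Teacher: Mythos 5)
Your overall mechanism --- a PBW ordering showing that no monomial can carry an extremal weight vector down to the extremal weight at $j-1$ when $m>n$ --- is the right one, and Case A is salvageable. But as written there is a genuine gap in two places. First, the ``propagation principle'' that drives Cases A and B is asserted, not proved, and you flag it yourself as the main obstacle; as stated it is in fact false when $n<0$: for $u=G^+_{-s}v$ one gets $G^-_pG^+_{-s}v=-\comm{G^+_{-s}}{G^-_p}v$, and the resulting modes $\no{JJ}_{p-s}$, $L_{p-s}$, $J_{p-s}$ with $n<p-s\le0$ are not killed by extremality at $j$. It is also far stronger than needed: once you have $v=G^-_ny\ne0$ with $G^-_pv=0$ for all $p>n$, a single PBW ordering (annihilating modes rightmost, the $G^-_q$ with $q\le n$ leftmost) already kills every monomial sending $v$ to weight $(j-1,\Delta-m)$, since its leftmost mode $G^-_{n_1}$, $n_1\le n$, would have to act on a nonzero vector of weight $(j,\Delta-m+n_1)$, forcing $n_1\ge m>n$. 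Second, and more seriously, Case C is not handled at all: the appeal to the diagonal commutators $\comm{G^+_{-q}}{G^-_q}$ is a gesture rather than an argument, and it would import the hypothesis $(\kk+1)(2\kk+3)\ne0$, which the \lcnamecref{lem:neighbours} does not assume --- it only hypothesises that the three extremal weights exist, and must hold for all $\kk\ne-3$.

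In fact Case C is vacuous, by the same ordering trick: if $G^-_ny=0$ then $G^-_qy=0$ for all $q\ge n$, and any PBW monomial carrying $y$ to the (nonzero) extremal weight space at $(j,\Delta)$ must have some $G^-_{n_1}$ with $n_1\le n-1$ as its leftmost mode, acting on a vector of weight $(j+1,\Delta+n_1)$; this lies strictly below the extremal weight $\Delta+n$ at $j+1$ and so vanishes, contradicting irreducibility. Hence $G^-_ny\ne0$ automatically and your Case A always applies. The paper's proof is leaner still: it connects $(j+1,\Delta+n)$ directly to $(j-1,\Delta-m)$ with one ordering argument --- every surviving monomial has a leftmost $G^-_{n_1}$ with $n_1\le n$ whose argument sits at weight $(j,\Delta-m+n_1)$, forcing $n_1\ge m$ --- with no case division, no propagation induction, and no restriction on $\kk$. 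You should either prove the nonvanishing of $G^-_ny$ and then run the direct ordering argument, or bypass the case split entirely as the paper does.
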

\begin{proof}
	Since the \pbw\ theorem holds for the mode algebra of $\ubpk$ \cite[Thm.~4.1]{KacQua03b}, we may choose an ordering so that monomials have the $G^-_r$, with $r>n$, as the rightmost modes and the $G^-_r$, with $r \le n$, as the leftmost.
	With this ordering, every monomial that maps the extremal weight $(j+1,\Delta+n)$ into the extremal weight $(j,\Delta)$ has $G^-_n$ as its leftmost mode.
	Similarly, every monomial mapping the extremal weight $(j+1,\Delta+n)$ into the extremal weight $(j-1,\Delta-m)$ has $G^-_{n_1} G^-_{n_2}$ as its two leftmost modes, where $n_1, n_2 \le n$ and $n_1 + n_2 = m+n$.
	If $m>n$, then there are no such monomials.
	However, this contradicts the assumption that $\Mod{M}$ is irreducible.
\end{proof}
\noindent Note that \cref{lem:extwtsexist} establishes that the hypothesised extremal weights in \cref{lem:neighbours} always exist as long as $\kk\ne-3, -1, -\frac{3}{2}$.
\begin{theorem} \label{prop:irred=sf+relax}
	\leavevmode
	\begin{enumerate}
		\item For $\kk\ne-3, -1, -\frac{3}{2}$, every irreducible module in $\wcatk$ is the spectral flow of a \rhwm.
		\item For coprime integers $\uu\ge2$ and $\vv\ge1$, every irreducible module in $\wcatuv$ is the spectral flow of a \rhwm.
	\end{enumerate}
\end{theorem}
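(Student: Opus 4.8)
The strategy is to take an irreducible module $\Mod{M}$ in $\wcatk$ (the case of $\wcatuv$ then follows since it is a full subcategory), and use the extremal-weight structure established above to locate a suitable spectral flow twist of $\Mod{M}$ that contains a genuine \rhwv. First I would apply \cref{lem:extwtsexist} to see that the extremal weights of $\Mod{M}$ are $(j,\Delta_j)$ with $j$ ranging over a full coset $[j_0] \in \CC/\ZZ$; concretely, writing $j = j_0 + \ell$ with $\ell \in \ZZ$, we get a sequence of extremal $L_0$-values $\Delta_{j_0+\ell}$. \cref{lem:neighbours} tells us that the increments $\Delta_{j_0+\ell+1} - \Delta_{j_0+\ell}$ are nondecreasing in $\ell$ (this is the discrete-convexity statement packaged by that \lcnamecref{lem:neighbours}).

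\smallskip\noindent
Next I would use the spectral flow weight formula \eqref{eq:bpsfweight}: twisting by $\bpsfsymb^{\ell}$ sends a weight $(j,\Delta)$ to $(j+\kappa\ell, \Delta + \ell j + \tfrac12 \kappa\ell(\ell-1))$. In particular, applying $\bpsfsymb^{\ell}$ to $\Mod{M}$ shifts the extremal-weight data by a quantity whose second difference in the coset parameter is governed by $\kappa$, i.e.\ spectral flow implements the affine Weyl-type translations on the ``parabola'' of extremal conformal weights. Because $\kappa \ne 0$ (we are assuming $\kk \ne -\tfrac32$), combining the convexity from \cref{lem:neighbours} with this quadratic shift shows that for a suitable choice of $\ell$, the twisted module $\bpsfsymb^{\ell}(\Mod{M})$ has its extremal conformal weights minimized at a single coset representative, say the one with $J_0$-eigenvalue $j_\star$, and moreover the two neighbouring extremal weights $(j_\star \pm 1, \cdot)$ have strictly larger $L_0$-eigenvalue. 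Concretely I expect to need: the increments $\Delta_{j_0+\ell+1} - \Delta_{j_0+\ell}$ form an arithmetic-progression-like sequence after accounting for the $\ell j$ and $\tfrac12\kappa\ell(\ell-1)$ terms, so there is a unique $\ell$ making the minimum occur ``in the middle''; one should double-check the boundary/tie cases where the minimum could be attained at two adjacent cosets, and handle them separately (possibly a quotient or a mild further argument).

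\smallskip\noindent
Having arranged this, I would argue that a weight vector $v$ of minimal conformal weight in $\bpsfsymb^{\ell}(\Mod{M})$, lying in the $J_0$-eigenspace $j_\star$, is a \rhwv. Indeed: it is a weight vector by construction; any positive mode of $J$, $L$, $G^-$ applied to $v$ would land in the $J_0$-eigenspace $j_\star$ or $j_\star - 1$ at strictly lower conformal weight (using that $G^-_n$ lowers $J_0$ by $1$ and $L_0$ by $n$), and by extremality combined with the ``middle minimum'' property these images must vanish; likewise $G^+_n v$ for $n > 0$ lands in eigenspace $j_\star + 1$ at lower conformal weight and vanishes for the same reason. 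Hence $v$ is annihilated by all positive modes, so it is a \rhwv, and since $\Mod{M}$ (hence $\bpsfsymb^{\ell}(\Mod{M})$) is irreducible, $\bpsfsymb^{\ell}(\Mod{M})$ is generated by $v$, i.e.\ it is a \rhwm. Therefore $\Mod{M} = \bpsfsymb^{-\ell}\!\bigl(\bpsfsymb^{\ell}(\Mod{M})\bigr)$ is the spectral flow of a \rhwm, as required. The cases $\uu = 2, \vv = 1$ ($\kk = -1$) and $\uu = 3, \vv = 2$ ($\kk = -\tfrac32$) are correctly excluded — as \cref{rem:secondcase} and the proof of \cref{prop:extwtsexist} indicate — since there $\kappa = 0$ or the algebra degenerates to a Heisenberg algebra, and the parabola argument breaks down.

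\smallskip\noindent
\textbf{Main obstacle.} The delicate part is establishing that a good $\ell$ exists and that, after twisting, the extremal-weight parabola has a \emph{strict} minimum at one coset with both neighbours strictly higher — in other words, ruling out (or carefully handling) the degenerate ``tie'' configuration where the minimum is shared by two adjacent $J_0$-cosets. In that tie case there is no single \rhwv\ of the naive form, and one must either show it cannot occur for an irreducible module (perhaps using \cref{prop:zhuconsequences} and the $1$-dimensionality of top-space weight spaces, or the $G^+_0$-injectivity circle of ideas around \cref{thm:iqhrmod}), or identify the module as a spectral flow of a \hw/\chw\ module directly. I expect this to require a short but careful case analysis using the explicit quadratic in \eqref{eq:bpsfweight} together with the convexity bound of \cref{lem:neighbours}; everything else is bookkeeping.
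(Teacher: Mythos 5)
Your overall strategy (extremal weights, the convexity from \cref{lem:neighbours}, and a spectral flow twist chosen via \eqref{eq:bpsfweight} to create a global minimum of the extremal conformal weights) is the same as the paper's. However, there is a genuine conceptual error at the heart of your plan: you insist on arranging a \emph{strict} minimum at a single $J_0$-eigenvalue $j_\star$, with both neighbours $(j_\star\pm1,\cdot)$ strictly higher, and you flag the ``tie'' configuration as the main unresolved obstacle. This is backwards. A \rhwv\ for $\ubpk$ is only required to be annihilated by modes of \emph{positive} index, all of which strictly lower the $L_0$-eigenvalue; it is not required to be killed by $G^{\pm}_0$. Hence any weight vector sitting at the \emph{global} minimum of $j\mapsto\Delta_j$ is automatically a \rhwv, no matter what happens at adjacent $J_0$-eigenvalues. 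The tie case is not a degenerate case to be excluded --- it is the generic, fully relaxed case, in which (after twisting) the minimum is attained at \emph{every} $j$ in the coset and the whole top space consists of \rhwvs. Your claimed conclusion (unique strict minimum) is therefore false in general, and any attempt to prove it would fail precisely for the \frms\ that the theorem is chiefly about.

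The second, smaller gap is that you never cleanly establish that a suitable finite twist $\ell$ exists. The paper does this by setting $\delta_j=\Delta_{j+1}-\Delta_j$, noting from \cref{lem:neighbours} that this integer-valued sequence is weakly increasing, so its limits $\delta_{\pm\infty}$ exist; if $\delta_{-\infty}\le 0\le\delta_{\infty}$ a global minimum of $\Delta_j$ already exists, and otherwise one of $\delta_{\pm\infty}$ is a finite nonzero integer (a weakly monotone integer sequence bounded away from $0$ stabilises), and twisting by $\ell=-\delta_{-\infty}$ or $\ell=-\delta_{\infty}$ shifts every $\delta_j$ by $\ell$ and reduces to the first case. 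Your ``unique $\ell$ making the minimum occur in the middle'' gestures at this but neither pins down $\ell$ nor addresses the possibility that $\Delta_j$ is monotone with no minimum before twisting. If you replace ``strict minimum at one coset'' by ``global minimum attained somewhere'' and argue the existence of $\ell$ via the limits of the increment sequence, your proof becomes the paper's.
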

\begin{proof}
	We prove the statement for $\wcatk$, noting that the statement for $\wcatuv$ follows because we have already noted that each irreducible module in $\wcat_{2,1}$ and $\wcat_{3,2}$ is \hw\ (see \cref{prop:extwtsexist,rem:secondcase}).

	So take $\kk\ne-3, -1, -\frac{3}{2}$ and fix an irreducible module $\Mod{M} \in \wcatk$.
	Let $(j,\Delta_j)$ denote its extremal weights, where $j$ runs over an equivalence class in $\CC/\ZZ$ (\cref{lem:extwtsexist}).
	Defining $\delta_j(\Mod{M}) = \Delta_{j+1} - \Delta_j$, it follows from \cref{lem:neighbours} that $\delta_j(\Mod{M})$ is weakly increasing with $j$.
	The limiting values $\delta_{\infty}(\Mod{M})$ and $\delta_{-\infty}(\Mod{M})$ are then defined, though they may be $\infty$ and $-\infty$, respectively.

	Suppose first that $\delta_{\infty}(\Mod{M}) \ge 0$ and $\delta_{-\infty}(\Mod{M}) \le 0$.
	Then, it follows that $\Delta_j$ must take a minimal value.
	Choose any $j$ such that $\Delta_j$ achieves this global minimum.
	Then, the corresponding weight vectors are \rhwvs.
	As $\Mod{M}$ is irreducible, it is thus a \rhwm.

	Suppose next that $\delta_{-\infty}(\Mod{M}) > 0$, hence that $\delta_{\infty}(\Mod{M}) > 0$ too.
	Then, $\Delta_j$ has no minima and $\Mod{M}$ is not \rhw.
	However, \eqref{eq:bpsfweight} shows that spectral flow maps extremal weights to extremal weights.
	It also shows that applying the functor $\bpsfsymb^{\ell}$ increases $\delta_j$ by $\ell$:
	\begin{equation}
		\delta_{j+(2\kk+3)\ell/3}\brac[\big]{\bpsfsymb^{\ell}(\Mod{M})} = \delta_j(\Mod{M}) + \ell.
	\end{equation}
	Taking $\ell = -\delta_{-\infty}(\Mod{M})$ and $j \to \pm\infty$ then gives $\delta_{\infty}\brac[\big]{\bpsfsymb^{\ell}(\Mod{M})} = \delta_{\infty}(\Mod{M}) - \delta_{-\infty}(\Mod{M}) \ge 0$ and $\delta_{-\infty}\brac[\big]{\bpsfsymb^{\ell}(\Mod{M})} = 0$.
	We therefore conclude that $\bpsfsymb^{-\delta_{-\infty}(\Mod{M})}(\Mod{M})$ is a \rhwm, by the previous part.

	The only remaining possibility is that $\delta_{\infty}(\Mod{M}) < 0$, which requires that $\delta_{-\infty}(\Mod{M}) < 0$ as well.
	In this case, a similar argument shows that $\bpsfsymb^{-\delta_{\infty}(\Mod{M})}(\Mod{M})$ is \rhw.
\end{proof}

\subsection{Completeness for irreducible \frms} \label{sec:complete}

The previous \lcnamecref{sec:wtcat} reduced the classification of irreducible modules in $\wcatk$ and $\wcatuv$ to the classification of \rhwms.
In this \lcnamecref{sec:complete}, we shall establish that the inverse reduction functors, when defined by \eqref{eq:iqhrmod}, construct a complete set of irreducible \fr\ $\bpminmoduv$-modules.

Recall that $\bprmodjhw = \lmod{j} \otimes \wmodhw$ is a $\ubpk$-module, for all $\kk\ne-3$, by \cref{cor:iqhrmod}\ref{it:iqhrmoduniv}.
\begin{proposition} \label{prop:completeuniv}
	For $\kk\ne-3$, every irreducible \fr\ $\ubpk$-module $\Mod{M}$ is isomorphic to $\bprmodjhw$, for some $[j] \in \CC/\ZZ$ and $h,w \in \CC$.
\end{proposition}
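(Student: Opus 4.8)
The plan is to start from an arbitrary irreducible fully relaxed $\ubpk$-module $\Mod{M}$ and extract from its top space enough data to identify it with one of the $\bprmodjhw$ using \cref{prop:zhuconsequences}\ref{it:isomorphic}. First I would fix a weight vector $v$ in the top space of $\Mod{M}$, with weight $(j,\Delta)$ and $\Omega$-eigenvalue $\omega$. By \cref{prop:zhuconsequences}\ref{it:isomorphic}, $\Mod{M}$ is completely determined up to isomorphism by the triple $([j],\Delta,\omega)$, so the task reduces to producing $h,w \in \CC$ such that $\bprmodjhw$ is irreducible fully relaxed with the \emph{same} invariants $([j],\Delta,\omega)$. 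The existence of such $\bprmodjhw$ would then force $\Mod{M} \cong \bprmodjhw$.

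The next step is to compute these invariants on the canonical generating vector of $\bprmodjhw = \lmod{j} \otimes \wmodhw$, namely $u = \ee^{-b+(j+\kappa)c} \otimes v_{h,w}$. From the embedding \eqref{eq:iqhr} and the conventions in \cref{sec:latt,sec:zam}, one reads off that $J_0 u = b_0 \otimes \wun$ acts as $j$ (as already remarked after \cref{cor:iqhrmod}), and that $L_0 = t_0 \otimes \wun + \wun \otimes T_0$ acts as $\kappa + h$, giving $\Delta = h + \kappa$. One then computes the $\Omega$-eigenvalue $\omega$ on $u$ using \eqref{eq:cubiccasimir}: since $G^+_0 u = \ee^c_0 \cdot \ee^{-b+(j+\kappa)c} \otimes v_{h,w}$ is a weight vector of $J_0$-eigenvalue $j+1$ (so $G^+_0$ is injective, consistent with \cref{thm:iqhrmod}\ref{it:G+inj}) and $G^-_0$ acts via the more complicated expression in \eqref{eq:iqhr}, one gets $G^+_0 G^-_0 + G^-_0 G^+_0$ acting on $u$ as a specific polynomial in $j$, $h$ and $w$ (the $W_0$-eigenvalue on $v_{h,w}$ enters linearly through the $\ee^{-c}\otimes W$ term). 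Thus $\omega$ is an explicit affine-linear function of $w$ with $h$-dependent, $j$-dependent coefficients. The point is that, holding $[j]$ fixed, the map $(h,w) \mapsto (\Delta,\omega)$ is \emph{surjective} onto $\CC^2$: $\Delta = h+\kappa$ solves for $h$, and then $\omega$ depends on $w$ with a nonzero (nonvanishing, since $\kk+3 \ne 0$) linear coefficient, so $w$ can be chosen to hit any target. Hence for the given $([j],\Delta,\omega)$ we can solve for $h,w$.

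It then remains to check that for these $h,w$ the module $\bprmodjhw$ is genuinely irreducible (not merely almost irreducible). By \cref{thm:iqhrmod}\ref{it:almostirred}, $\bprmodjhw$ is indecomposable, almost irreducible and fully relaxed, and by \ref{it:reducible} it is irreducible for all but at most three cosets $[j]$. The subtle point — and I expect this to be the main obstacle — is ruling out the possibility that our particular $[j]$ is one of the bad ones. Here I would use that $\Mod{M}$ itself is irreducible with these invariants: any nonzero submodule of $\bprmodjhw$ meets its top space by almost irreducibility, and the top space of $\bprmodjhw$ carries the same action of $\zhu{\ubpk}$ as that of $\Mod{M}$ (by \cref{prop:zhuconsequences}\ref{it:isomorphic}, which only uses the data $([j],\Delta,\omega)$), which is irreducible as a $\zhu{\ubpk}$-module since $\Mod{M}$ is. So a proper nonzero submodule would give a proper nonzero sub-$\zhu{\ubpk}$-module of an irreducible one — a contradiction. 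Therefore $\bprmodjhw$ is irreducible and isomorphic to $\Mod{M}$, which completes the proof. The one place needing care is confirming that the top space of $\bprmodjhw$ really is the Zhu-module attached to $([j],\Delta,\omega)$ — i.e. that $\lmod{j}\otimes\wmodhw$ has its minimal conformal weight concentrated exactly on $\{\ee^{-b+(j+n+\kappa)c}\otimes v_{h,w} : n\in\ZZ\}$ — which follows from the conformal weight bookkeeping in \cref{sec:latt,sec:zam} together with $\wmodhw$ being highest weight.
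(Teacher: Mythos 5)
Your proposal is correct and follows essentially the same route as the paper: match the Zhu-algebra invariants $([j],\Delta,\omega)$ of $\Mod{M}$ against those of the canonical \rhwvs\ of $\bprmodjhw$, using $\Delta=h+\kappa$ and the fact that $\omega$ is affine-linear in $w$ with nonzero coefficient $2\alpha_{\kk}$ (as $\kk\ne-3$), and then invoke \cref{prop:zhuconsequences}\ref{it:isomorphic}. Your additional verification that $\bprmodjhw$ is genuinely irreducible (via almost irreducibility and the irreducibility of the top space as a $\zhu{\ubpk}$-module) fills in a detail the paper leaves implicit, but it is not a different method.
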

\begin{proof}
	As $\Mod{M}$ is irreducible and \rhw, it is determined up to isomorphism by the eigenvalues of $J_0$, $L_0$ and $\Omega$ on some weight vector $v$ in its top space, by \cref{prop:zhuconsequences}\ref{it:isomorphic}.
	Let $j'$, $\Delta$ and $\omega$ denote these eigenvalues, respectively.
	Then, we need only match them with those of some \rhwv\ $\ee^{-b+(j+n+\kappa)c} \otimes v_{h,w}$, $n \in \ZZ$, in $\bprmodjhw$.
	Here, $v_{h,w}$ is the \hwv\ of $\wmodhw$.

	As noted after \cref{cor:iqhrmod}, the $J_0$-eigenvalue is $j+n$.
	This means that we must choose $j'=j+n$, for some $n \in \ZZ$, hence $[j']=[j]$ in $\CC/\ZZ$.
	A similar computation with $L_0$ instead of $J_0$ leads to $\Delta = h+\kappa$.
	The computation for $\Omega$ is complicated by the form of $G^-_0$ in \eqref{eq:iqhr}.
	However, it is enough to note that
	\begin{subequations} \label{eq:actionofG-}
		\begin{equation}
			G^-_0 (\ee^{-b+(j+n+\kappa)c} \otimes v_{h,w}) = \brac[\big]{\alpha_{\kk} w + P_{\kk}(j+n,h)} \ee^{-b+(j+n-1+\kappa)c} \otimes v_{h,w},
		\end{equation}
		where $\alpha_{\kk} = \frac{(\kk+3)^{3/2}}{\sqrt{3}}$ and $P_{\kk}$ is the polynomial
		\begin{equation}
			 P_{\kk}(j,h) = -(\kk+2)(\kk+3) h + \brac[\big]{(\kk+3) h - 2 (\kk+2)^2} (j+\kappa) + 3(\kk+2) (j+\kappa)^2 - (j+\kappa)^3.
		\end{equation}
	\end{subequations}
	In fact, the precise form of this polynomial is unimportant here.
	All we need is that the consequent identification for the $\Omega$-eigenvalue has the form $\omega = 2 \alpha_{\kk} w + Q_{\kk}(j,h)$, where $Q_{\kk}$ is a (different) polynomial in $j$ and $h$, by \eqref{eq:cubiccasimir}.
	(Because $\Omega$ is central in the Zhu algebra of $\ubpk$ and $\Mod{M}$ is irreducible, this polynomial is in fact independent of $j$.)

	We conclude that any choice of $[j']$, $\Delta$ and $\omega$ corresponds to some (unique) choice of $[j]$, $h$ and $w$.
\end{proof}
\noindent We mention that while the precise form of the polynomial $P_{\kk}$ was not important for the proof of \cref{prop:completeuniv}, it will be important in some of the finer classification analyses in \cref{sec:examples}.
\begin{remark} \label{rem:localisation}
	There is an alternative means to prove \cref{prop:completeuniv} that may be more useful when generalising to higher rank W-algebras.
	First, prove the corresponding statement for \hw\ (or \chw) modules.
	This is somewhat easier because the eigenvalues that one is required to match will not include those of any ``higher Casimir'' operators.
	Then, extend the proof to \frms\ using the analogue of Mathieu's twisted localisation functors \cite{MatCla00} for the W-algebra's Zhu algebra, as in \cite{KawRel19,KawRel20}.
\end{remark}

This establishes the desired completeness result for the universal \bp\ \voas.
We next turn to its analogue for irreducible \fr\ $\bpminmoduv$-modules.
This means restricting to $\uu\ge2$ and $\vv\ge3$, by \cref{thm:iqhr}\ref{it:iqhrminmod}.
We mention again that the irreducible \rhw\ $\bpminmoduv$-modules with $\uu\ge2$ and $\vv=1$ or $2$ were already shown to be \hw\ in \cite{AdaCla19}.

We start with a technical \lcnamecref{lem:stringfns} about the embedding \eqref{eq:iqhr} of universal \voas\ given in \cref{thm:iqhr}\ref{it:iqhruniv}.
Our proof involves characters and string functions, although it is also easy to give an equivalent combinatorial proof using \pbw\ bases.
\begin{lemma} \label{lem:stringfns}
	For every $v \in \uzamk$, we have $\ee^{nc} \otimes v \in \ubpk$ for all $n\gg0$.
\end{lemma}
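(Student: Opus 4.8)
The plan is to exploit the explicit embedding $\ubpk \hookrightarrow \lvoa \otimes \uzamk$ of \cref{thm:iqhr}\ref{it:iqhruniv} together with a character/string-function comparison. The key observation is that $\lvoa \otimes \uzamk$ carries a natural triple grading: the conformal weight coming from $t \otimes \wun + \wun \otimes T$, the $c_0$-charge on the $\lvoa$-factor (equivalently, the exponent $n$ in $\ee^{nc}$), and the eigenvalue of (say) $W_0$ or some secondary grading on $\uzamk$. Under the embedding, $G^+ \mapsto \ee^c \otimes \wun$ has $c_0$-charge $+1$, while $\ubpk$ itself sits in the subspace with $c_0$-charge running over all of $\ZZ$. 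The subalgebra $\ubpk$ is generated by $J, L, G^\pm$, whose images have $c_0$-charges $0,0,+1,-1$ respectively, so $\ubpk$ is spanned by normally ordered monomials whose total $c_0$-charge is an integer; the $\ee^{nc} \otimes v$ claim is really the statement that, at fixed conformal weight, the $c_0$-charge $n$ subspace of $\ubpk$ surjects onto $\set{\ee^{nc}} \otimes \uzamk$ once $n$ is large enough (compared to the conformal weight of $v$).

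First I would fix $v \in \uzamk$ homogeneous of conformal weight $\Delta$, and observe that $\ee^{nc} \otimes v$ has conformal weight $n + \kappa + \Delta$ (since $\ee^{nc}$ has $t$-weight $n$ and the shift $\kappa$ is absorbed into how we count, or more carefully: the conformal weight of $\ee^{nc} \otimes v$ in $\lvoa \otimes \uzamk$ is $n + \Delta$ with the convention from \cref{sec:latt}). Then I would compute the character of $\ubpk$ graded by conformal weight and by $c_0$-charge, and compare it term-by-term with the character of $\lvoa \otimes \uzamk$ with the same bigrading. The point is that $\ubpk$ and $\lvoa \otimes \uzamk$ have the \emph{same} character in each fixed conformal weight once we sum over $c_0$-charge (this is the content of the inverse-reduction construction being an embedding with the "right size" — one can see it from the \pbw\ bases: $\ubpk$ has \pbw\ generators $J_{-n}, L_{-n}, G^+_{-n}, G^-_{-n}$ while $\lvoa \otimes \uzamk$ has $c_{-n}, d_{-n}, (\ee^{\pm c})_{-n}$-type modes and $T_{-n}, W_{-n}$, and the bijection $G^+ \leftrightarrow \ee^c$, $G^- \leftrightarrow \ee^{-c}(\cdots)$, $J \leftrightarrow b$, $L \leftrightarrow t \otimes \wun + \wun \otimes T$ matches the two generating sets). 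Since $\ubpk \subseteq \lvoa \otimes \uzamk$ and they agree in graded dimension at each conformal weight, the inclusion is an equality of graded vector spaces in each conformal weight. In particular, for fixed conformal weight $N$, the $c_0$-charge $n$ piece of $\ubpk$ equals the $c_0$-charge $n$ piece of $\lvoa \otimes \uzamk$. For $n$ large relative to $\Delta$ and relative to how many $\ee^{-c}$-modes are needed to realise states of conformal weight $\le N$, the $c_0$-charge $n$ piece of $\lvoa \otimes \uzamk$ at conformal weight $n + \Delta$ is exactly $\set{\ee^{nc}} \otimes (\uzamk)_\Delta$ — there is no room for negative $c_0$-charge contributions (each $(\ee^{-c})_{-m}$ lowers $c_0$-charge by $1$ but raises conformal weight by $m \ge 1$, and one would need to compensate with extra $(\ee^c)$-modes which raise conformal weight further). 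So $\ee^{nc} \otimes v$ lies in $\ubpk$ for all such $n$.

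The main obstacle I anticipate is making the bookkeeping in the last step fully rigorous: one must show that at conformal weight $n + \Delta$, the subspace of $\lvoa \otimes \uzamk$ with $c_0$-charge exactly $n$ is, for $n \gg 0$, precisely $\spn\set{\ee^{nc}} \otimes (\uzamk)_\Delta$ — i.e.\ that any state of that bidegree built with $d_{-m}$'s, $c_{-m}$'s, $(\ee^{kc})_{-m}$'s and $\uzamk$-modes must in fact be $\ee^{nc}$ tensored with something of $\uzamk$-weight $\Delta$. This is where the precise conformal weights of the lattice generators ($c, d$ weight $1$; $\ee^{kc}$ weight $k$, which can be negative!) matter, and one has to rule out configurations where a large positive-charge $(\ee^{kc})$ with large negative weight is balanced by many raising modes. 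Because $\ee^{kc}$ for $k < 0$ has \emph{positive} conformal weight $|k|$, and the $c, d$ modes contributing below weight $1$ don't exist ($c_0, d_0$ only act within a weight), this is ultimately a finiteness statement: there are only finitely many monomials of bounded conformal weight, so beyond some $n_0(\Delta, N)$ the charge-$n$ sector has stabilised to the naive answer. The string-function argument of \cite[App.~A]{KawRel18} referenced in the outline presumably packages exactly this stabilisation cleanly; alternatively the \pbw-basis version does the same combinatorics directly. I would present the string-function version as the main proof and remark that the combinatorial version is available, as the excerpt's own phrasing suggests.
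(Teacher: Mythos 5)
Your argument founders on the claim that $\ubpk$ and $\lvoa \otimes \uzamk$ ``have the same character in each fixed conformal weight once we sum over $c_0$-charge'', from which you deduce that the inclusion is an equality of graded vector spaces at each conformal weight. This is false. The generator $\ee^{-nc}$ of $\lvoa$ has conformal weight $-n$, so $\lvoa \otimes \uzamk$ is unbounded below in conformal weight and each of its conformal-weight eigenspaces, summed over charge, is \emph{infinite}-dimensional (the weight-$0$ space already contains a nonzero contribution from every charge sector $n \le 0$), whereas the weight spaces of $\ubpk$ are \fdim. The embedding of \cref{thm:iqhr} is proper, and the bigraded dimensions also disagree at any fixed charge: the charge-$(-n)$ sector of $\ubpk$ ($n>0$) has minimal conformal weight $2n$, attained by $(G^-_{-2})^n \wun$, while the corresponding sector of $\lvoa \otimes \uzamk$ contains $\ee^{-nc} \otimes \wun$ of conformal weight $-n$. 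Without the equality of bigraded dimensions you have no argument that the charge-$n$ pieces coincide, and that coincidence is the whole content of the lemma. The correct statement is purely asymptotic: the charge-$n$ string function of $\ubpk$, normalised by $q^{-n}$, converges coefficientwise as $n \to +\infty$ to the ($n$-independent) normalised string function of $\lvoa \otimes \uzamk$. Proving this convergence is the nontrivial step; the paper does it by expanding $\fch{\ubpk}{z;q}$ in the annulus $\abs{q}^2 < \abs{z} < \abs{q}^{-1}$ where the $z=1$ specialisation converges, extracting the coefficient of $z^n$ as a residue, and showing that the $n \to \infty$ limit produces a formal delta function collapsing the product to \eqref{eq:stringfn1}. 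Equality of the two \fdim\ bigraded pieces for $n \gg 0$ then forces $\ee^{nc} \otimes v \in \ubpk$.

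A secondary error: the charge-$n$, conformal-weight-$(n+\Delta)$ subspace of $\lvoa \otimes \uzamk$ is not $\spn\set{\ee^{nc}} \otimes (\uzamk)_{\Delta}$ for any $n$. The Heisenberg modes $c_{-m}$ and $d_{-m}$, $m \ge 1$, preserve the charge and raise the conformal weight, so this subspace also contains, for instance, $c_{-1}\ee^{nc} \otimes v'$ for every $v' \in \uzamk$ of weight $\Delta-1$; its dimension is the coefficient of $q^{\Delta}$ in $\prod_{i\ge1}(1-q^i)^{-2} \cdot \fch{\uzamk}{q}\, q^{\cczam/24}$, not $\dim (\uzamk)_{\Delta}$. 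Your ``no room for negative-charge contributions'' bookkeeping rules out only the $\ee^{\pm c}$-type modes and overlooks the Heisenberg descendants entirely. This stronger identification is not actually needed for the conclusion (once the charge-$n$ pieces of $\ubpk$ and $\lvoa \otimes \uzamk$ are known to agree, $\ee^{nc}\otimes v$ lies in $\ubpk$ regardless of what else does), but it shows that the finiteness you hope will make the charge-$n$ sector ``stabilise to the naive answer'' does not hold: the sector stabilises to the full Fock-space answer, and establishing that stabilisation is exactly what the string-function computation is for.
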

\begin{proof}
	We may assume, without loss of generality, that $v$ is a weight vector of weight $h$ (say).
	Then, the statement of the \lcnamecref{lem:stringfns} will follow if we can show that the dimensions of the weight spaces of $\lvoa \otimes \uzamk$ and $\ubpk$, with weight $(n,h+n)$, match for $n\gg0$.
	For this, it suffices to show that the string functions $s^{\bpsymb}_n(q)$ of $\ubpk$ converge to the string functions $s_n(q)$ of $\lvoa \otimes \uzamk$ as $n \to \infty$.
	(We define these string functions below and refer to \cite[App~A]{KawRel18} for further details.)

	Define the characters of $\lvoa$ and $\uzamk$ as follows:
	\begin{equation}
		\begin{aligned}
			\fch{\lvoa}{z;q} &= \traceover{\lvoa} z^{b_0} q^{t_0-\cclvoa/24}
			= \frac{\sum_{n\in\ZZ} z^n q^{n-\cclvoa/24}}{\prod_{i=1}^{\infty} (1-q^i)^2}, \\
			\fch{\uzamk}{q} &= \traceover{\uzamk} q^{T_0-\cczam/24}
			= \frac{q^{-\cczam/24}}{\prod_{i=1}^{\infty} (1-q^{i+1}) (1-q^{i+2})}.
		\end{aligned}
	\end{equation}
	The string function $s_n(q)$ of $\lvoa \otimes \uzamk$ is then the coefficient of $z^n$ in its character:
	\begin{equation} \label{eq:stringfn1}
		s_n(q) = \frac{q^{n-\ccbp/24}}{\prod_{i=1}^{\infty} (1-q^i)^2 (1-q^{i+1})(1-q^{i+2})}.
	\end{equation}
	We note that $q^{-n} s_n(q)$ is independent of $n$.
	For this reason, we shall actually prove that the string function of $\ubpk$, normalised by a factor of $q^{-n}$, converges as $n \to \infty$ to $q^{-n} s_n(q)$.

	To do this, we employ the method of \cite[App.~A]{KawRel18}.
	First, note that $J \mapsto b \otimes \wun$ in \eqref{eq:iqhr} implies that the appropriate definition of character for $\ubpk$ is
	\begin{equation} \label{eq:bpch1}
		\fch{\ubpk}{z;q} = \traceover{\ubpk} z^{J_0} q^{L_0-\ccbp/24}
		= \frac{q^{-\ccbp/24}}{\prod_{i=1}^{\infty} (1-q^i) (1-zq^i) (1-q^{i+1})(1-z^{-1}q^{i+1})}.
	\end{equation}
	(This is in fact the standard definition, explaining why we defined $\ch{\lvoa}$ as we did above.)
	Next, note that as $\ubpk$ has \fdim\ $L_0$-eigenspaces, its character (as a power series) must converge when $z=1$.
	Looking at the poles in \eqref{eq:bpch1}, we conclude that the \rhs\ will give the correct power series when expanded in the region $\abs{q}<1$ and $\abs{q}^2 < \abs{z} < \abs{q}^{-1}$.
	In particular, we may assume that $\abs{zq} < 1$.

	This motivates writing \eqref{eq:bpch1} in the form
	\begin{equation}
		\begin{split}
			\fch{\ubpk}{z;q}
			&= \frac{q^{-\ccbp/24}}{1-zq} \frac{1}{\prod_{i=1}^{\infty} (1-q^i) (1-q^{i+1}) (1-zq^{i+1})(1-z^{-1}q^{i+1})} \\
			&= q^{-\ccbp/24} \sum_{\ell=0}^{\infty} z^{\ell} q^{\ell} \sum_{m=0}^{\infty} p_m(z) q^m,
		\end{split}
	\end{equation}
	where the $p_m$ are Laurent polynomials.
	The string function is then obtained as a residue about $0$:
	\begin{equation}
		s^{\bpsymb}_n(q) = \oint_0 \fch{\ubpk}{z;q} z^{-n-1} \, \frac{\dd z}{2\pi\ii}
		= \sum_{m=0}^{\infty} \sum_{\ell=0}^{\infty} \oint_0 p_m(z) z^{\ell-n-1} \, \frac{\dd z}{2\pi\ii} \, q^{\ell+m-\ccbp/24}.
	\end{equation}
	For each $m$, we see that the $\ell$-sum may be extended to include the negative integers, provided that $n$ is larger than the maximal power of $z$ appearing in $p_m(z)$.
	In particular, this extension is justified in the $n \to \infty$ limit, giving
	\begin{equation}
		\begin{split}
			\lim_{n\to\infty} q^{-n} s^{\bpsymb}_n(q)
			&= \lim_{n\to\infty} q^{-n-\ccbp/24} \oint_0 \sum_{\ell=-\infty}^{\infty} z^{\ell} q^{\ell} \sum_{m=0}^{\infty} p_m(z) q^m \, z^{-n-1} \, \frac{\dd z}{2\pi\ii} \\
			&= \lim_{n\to\infty} q^{-\ccbp/24} \oint_0 \frac{\delta(zq) \, (zq)^{-n} z^{-1}}{\prod_{i=1}^{\infty} (1-q^i) (1-q^{i+1}) (1-zq^{i+1})(1-z^{-1}q^{i+1})} \, \frac{\dd z}{2\pi\ii} \\
			&= \frac{q^{-\ccbp/24}}{\prod_{i=1}^{\infty} (1-q^i)^2 (1-q^{i+1})(1-q^{i+2})} \oint_0 \delta(zq) \, z^{-1} \, \frac{\dd z}{2\pi\ii}
			= q^{-n} s_n(q),
		\end{split}
	\end{equation}
	as desired.
	Here, $\delta(x) = \sum_{\ell \in \ZZ} x^{\ell}$ denotes the delta function of formal power series.
\end{proof}

Recall that for coprime integers $\uu\ge2$ and $\vv\ge3$, $\irrepsuv$ is the set of pairs $(h,w) \in \CC^2$ such that the irreducible \hw\ $\uzamk$-module $\wmodhw$ is a $\wminmoduv$-module.
\cref{cor:iqhrmod}\ref{it:iqhrmodminmod} then guarantees that the $\bprmodjhw$, with $(h,w) \in \irrepsuv$, are $\bpminmoduv$-modules.
We now prove a converse.
\begin{theorem} \label{prop:completeminmod}
	For coprime integers $\uu\ge2$ and $\vv\ge3$, every irreducible \fr\ $\bpminmoduv$-module is isomorphic to $\bprmodjhw$, for some $[j] \in \CC/\ZZ$ and $(h,w) \in \irrepsuv$.
\end{theorem}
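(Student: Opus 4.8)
The plan is to bootstrap from \cref{prop:completeuniv}. An irreducible \frm\ for $\bpminmoduv$ is, by restriction along the quotient map $\ubpk \to \bpminmoduv$, also an irreducible \frm\ for $\ubpk$ (weights, top space, relaxedness and full relaxedness are all unchanged), so \cref{prop:completeuniv} already supplies an isomorphism $\Mod{M} \cong \bprmodjhw = \lmod{j} \otimes \wmodhw$ for some $[j] \in \CC/\ZZ$ and $h, w \in \CC$. The whole content of the theorem is therefore the claim that any such pair $(h,w)$ lies in $\irrepsuv$. Writing $\bpmaxpropk$ and $\zammaxpropk$ for the maximal proper ideals of $\ubpk$ and $\uzamk$, so that $\bpminmoduv = \ubpk / \bpmaxpropk$ and $\wminmoduv = \uzamk / \zammaxpropk$, this amounts to showing that $\zammaxpropk$ acts as zero on $\wmodhw$.

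The first step is to relate $\bpmaxpropk$ and $\zammaxpropk$ through the embedding of \cref{thm:iqhr}. \cref{thm:iqhr}\ref{it:iqhrminmod} says precisely that $\ubpk \ira \lvoa \otimes \uzamk$ descends, modulo maximal ideals, to the injection $\bpminmoduv \ira \lvoa \otimes \wminmoduv$. Since tensoring over $\CC$ is exact, the kernel of the map $\lvoa \otimes \uzamk \to \lvoa \otimes \wminmoduv$ induced by the quotient $\uzamk \to \wminmoduv$ is $\lvoa \otimes \zammaxpropk$, and this is an ideal of $\lvoa \otimes \uzamk$ because $\zammaxpropk$ is one in $\uzamk$. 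Computing the kernel of the composite $\ubpk \to \lvoa \otimes \wminmoduv$ in the two evident ways, and using the injectivity of $\bpminmoduv \ira \lvoa \otimes \wminmoduv$, yields
\begin{equation*}
	\bpmaxpropk = \ubpk \cap (\lvoa \otimes \zammaxpropk),
\end{equation*}
where we identify $\ubpk$ with its image in $\lvoa \otimes \uzamk$.

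I would then fix an arbitrary $v \in \zammaxpropk$ and argue as follows. By \cref{lem:stringfns} there is some $n$ (depending on $v$) with $\ee^{nc} \otimes v \in \ubpk$; on the other hand $\ee^{nc} \otimes v$ lies in the ideal of $\lvoa \otimes \uzamk$ generated by $\wun \otimes v$, hence in $\lvoa \otimes \zammaxpropk$. By the displayed identity, $\ee^{nc} \otimes v \in \bpmaxpropk$, so it acts as zero on the $\bpminmoduv$-module $\Mod{M}$; that is, the field $Y_{\Mod{M}}(\ee^{nc} \otimes v, z)$ vanishes. Transporting this along $\Mod{M} \cong \lmod{j} \otimes \wmodhw$ and using that the vertex operator of a tensor product factorises, we obtain $0 = Y_{\lmod{j}}(\ee^{nc}, z) \otimes Y_{\wmodhw}(v, z)$. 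Here $Y_{\lmod{j}}(\ee^{nc}, z) \ne 0$ --- for instance, $\ee^{nc}_0$ sends the generating \rhwv\ of $\lmod{j}$ to the nonzero vector $\ee^{-b+(j+n+\kappa)c}$ --- and an operator of the form $X \otimes Y$ on a tensor product of vector spaces vanishes only if $X = 0$ or $Y = 0$, so $Y_{\wmodhw}(v, z) = 0$. As $v \in \zammaxpropk$ was arbitrary, $\zammaxpropk$ annihilates $\wmodhw$, hence $\wmodhw$ is a $\wminmoduv$-module and $(h,w) \in \irrepsuv$.

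The genuinely nontrivial input is \cref{lem:stringfns}: without it we could only place $\ee^{nc} \otimes v$ in $\lvoa \otimes \uzamk$ rather than in $\ubpk$, and the argument would have no traction. Since that lemma is established separately via the string-function estimates of \cite[App.~A]{KawRel18}, the remaining work --- the comparison of maximal ideals and the bookkeeping with tensor-product modules --- presents no further obstacle.
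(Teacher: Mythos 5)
Your proposal is correct and follows essentially the same route as the paper: reduce to \cref{prop:completeuniv}, use \cref{lem:stringfns} to place $\ee^{nc} \otimes \chi$ (for $\chi$ in the maximal ideal $\zammaxpropk$) inside $\ubpk$, observe that it must then lie in $\bpmaxpropk$, and conclude by examining its action on $\lmod{j} \otimes \wmodhw$; your clean identity $\bpmaxpropk = \ubpk \cap (\lvoa \otimes \zammaxpropk)$ is just a tidier packaging of the paper's ad hoc argument for the same containment. The one imprecise step is the final dichotomy: the field $Y(\ee^{nc} \otimes v, z)$ on the tensor-product module is not literally an operator of the form $X \otimes Y$ --- each of its modes is a sum of terms $\ee^{nc}_p \otimes v_q$ over $p+q$ fixed --- so ``$X \otimes Y = 0$ implies $X=0$ or $Y=0$'' does not apply verbatim. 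This is easily repaired, either by noting that the individual summands applied to homogeneous vectors land in distinct components of the natural bigrading of $\lmod{j} \otimes \wmodhw$ (so each must vanish separately), or, as the paper does, by choosing $m$ maximal with $\chi_m v_{h,w} \ne 0$ so that only the single term $\ee^{nc}_0 \otimes \chi_m$ survives on the \rhwv.
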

\begin{proof}
	By \cref{prop:completeuniv}, every irreducible \fr\ $\bpminmoduv$-module is isomorphic to $\bprmodjhw$, for some $[j] \in \CC/\ZZ$ and $(h,w) \in \CC^2$.
	(Obviously, this means that this $\bprmodjhw$ is also irreducible and \fr.)
	Our task is thus to prove that in fact $(h,w) \in \irrepsuv$.

	Suppose that this is not the case, so that $\wmodhw$ is not a $\wminmoduv$-module.
	We will show that this implies that $\bprmodjhw = \lmod{j} \otimes \wmodhw$ is not a $\bpminmoduv$-module.
	To see this, let $\zammaxpropk$ be the (unique) maximal ideal of $\uzamk$.
	Then, $\zammaxpropk \cdot \wmodhw \ne 0$.
	In fact, as $\wmodhw$ is generated by its \hwv\ $v_{h,w}$, we must have $\zammaxpropk v_{h,w} \ne 0$.
	In other words, there exists $\chi \in \zammaxpropk$ and $m \in \ZZ$ such that $\chi_m v_{h,w} \ne 0$.
	We shall choose $m$ to be maximal with this property.

	By \cref{lem:stringfns}, there exists $n \in \ZZ$ such that $\ee^{nc} \otimes \chi \in \ubpk$.
	We claim that for this $n$, $\ee^{nc} \otimes \chi$ is necessarily in the (unique) maximal ideal $\bpmaxpropk$ of $\ubpk$.
	For if this were not the case, then we could act with $\ubpk$-modes to bring $\ee^{nc} \otimes \chi$ to the vacuum vector $\wun_{\bpsymb} = \wun_{\lvoa} \otimes \wun_{\zamsymb}$.
	However, this is impossible because $\chi \in \zammaxpropk$.
	On the other hand, the maximality of $m$ gives
	\begin{equation} \label{eq:itain'tzero}
		(\ee^{nc} \otimes \chi)_m (\ee^{-b+(j+\kappa)c} \otimes v_{h,w})
		= \ee^{nc}_0 \ee^{-b+(j+\kappa)c} \otimes \chi_m v_{h,w}
		= \ee^{-b+(j+n+\kappa)c} \otimes \chi_m v_{h,w} \ne 0.
	\end{equation}
	This shows that there is an element of $\bpmaxpropk$ acting nontrivially on an element of $\bprmodjhw$, proving that $\bprmodjhw$ is not a $\bpminmoduv$-module, as required.
\end{proof}

We recall from \cref{sec:zam} that for nondegenerate levels, $\irrepsuv$ is known to be isomorphic to $(\pwlat{\uu-3} \times \pwlat{\vv-3}) / \ZZ_3$, see \cref{eq:Z3action,eq:hw(rs)}.
Consequently, \cref{prop:completeminmod} recovers the \fr\ part of the irreducible classification in $\wcatuv$ for these levels, as was first established in \cite{FehCla20} using different methods.
Crucially, the inverse reduction arguments given here explain why the set $(\pwlat{\uu-3} \times \pwlat{\vv-3}) / \ZZ_3$ appears in this result.

\subsection{Completeness for irreducible \hwms} \label{sec:completehw}

Having classified the irreducible \fr\ $\bpminmoduv$-modules, for $\uu\ge2$ and $\vv\ge3$, we turn to the remaining irreducible \rhwms.
As noted in \cref{prop:zhuconsequences}\ref{it:trichotomy}, these are either \hw\ or \chw.
We shall start by classifying the \hw\ $\bpminmoduv$-modules with an \infdim\ top space.

\begin{proposition} \label{prop:infdimhwm}
	Let $\kk\ne-3$ (coprime integers $\uu\ge2$ and $\vv\ge3$).
	Then:
	\begin{enumerate}
		\item\label{it:infdimchw} Every irreducible \chw\ $\ubpk$-module ($\bpminmoduv$-module) $\bpcmodjD$ with an \infdim\ top space is isomorphic to a submodule of $\bprmodjhw$, for some unique $(h,w) \in \CC^2$ ($(h,w) \in \irrepsuv$).
		\item\label{it:infdimhw} Every irreducible \hw\ $\ubpk$-module ($\bpminmoduv$-module) $\bpmodjD$ with an \infdim\ top space is isomorphic to a quotient of $\bprmodjhw$, for some unique $(h,w) \in \CC^2$ ($(h,w) \in \irrepsuv$).
	\end{enumerate}
\end{proposition}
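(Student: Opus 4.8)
The plan is to relate the irreducible \hw\ (or \chw) $\ubpk$-module with \infdim\ top space to one of the modules $\bprmodjhw$ by exploiting the bijectivity of $G^+_0$ established in \cref{thm:iqhrmod}\ref{it:G+inj}. First I would treat the \chw\ case, since there the conjugate \hwv\ $v$ is annihilated by $G^-_0$ but not $G^+_0$, and the top space, being infinite-dimensional, is spanned by $v$ together with the vectors $(G^+_0)^n v$, $n\ge0$ (this uses \cref{prop:zhuconsequences}\ref{it:1dimwtspaces} and the same \pbw-ordering argument as in the proof of that proposition). On each such module the cubic Casimir $\Omega$ acts as a fixed scalar $\omega$, and $L_0$ acts as $\Delta$. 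The argument of \cref{prop:completeuniv} then shows that the pair $(\Delta,\omega)$, together with $[j]\in\CC/\ZZ$, determines a unique $(h,w)$ via the relations $\Delta = h+\kappa$ and $\omega = 2\alpha_{\kk}w + Q_{\kk}(j,h)$. I would then construct an explicit nonzero $\ubpk$-module map from the irreducible \chwm\ into $\bprmodjhw$ by sending the \chwv\ to the (unique up to scalar) vector in $\bprmodjhw$ of the correct weight $(j,\Delta)$ that is annihilated by $G^-_0$; such a vector exists precisely because $\bprmodjhw$ has a bijective $G^+_0$, so the $J_0$-eigenspace of eigenvalue $j$ in the top space contains exactly one line on which $G^-_0$ vanishes (namely the kernel of $G^-_0$ restricted there, which is one-dimensional when the top space is infinite-dimensional and the relevant $G^-_0$ is not injective). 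Because both modules are built from their top spaces by the free action of $G^+_0$ and the remaining modes act through polynomials in $J_0, L_0, \Omega$, this map extends to a nonzero homomorphism, which is injective by irreducibility of the source.

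For the \hw\ case in part \ref{it:infdimhw}, I would run the dual argument: an irreducible \hwm\ $\bpmodjD$ with \infdim\ top space has top space spanned by the \hwv\ $v$ and the vectors $(G^-_0)^n v$, $n\ge0$, with $G^+_0 v = 0$. The same matching of $(\Delta,\omega)$ and $[j]$ picks out a unique $(h,w)$, and now I would produce a nonzero map \emph{from} $\bprmodjhw$ \emph{onto} $\bpmodjD$ by sending the \rhwv\ $\ee^{-b+(j+\kappa)c}\otimes v_{h,w}$ to an appropriate \hwv-generating element of $\bpmodjD$ — concretely, since $G^+_0$ is bijective on $\bprmodjhw$ there is a vector $u$ in its top space of the right weight with $G^+_0 u \ne 0$ but whose image under iterated $G^+_0$ eventually... actually the cleaner route is to use the universal property: $\bprmodjhw$ is almost irreducible and \fr\ (\cref{thm:iqhrmod}\ref{it:almostirred}), so it has a unique maximal submodule not meeting its top space; quotienting and then passing to the \hw\ subquotient singled out by $G^+_0 v = 0$ yields $\bpmodjD$. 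Alternatively — and this is the form I would present — I would invoke the conjugation functor $\bpconj$: by \eqref{eq:defbpconj} and \eqref{eq:dihedral}, $\bpconj$ exchanges \hw\ and \chw\ modules, and one checks $\bpconj(\bprmodjhw)$ is again of the form $\bprmod{j'}{h'}{w'}$ up to spectral flow (using that $\bpconj$ sends $G^+\leftrightarrow G^-$ and $\bprmodjhw$ has bijective $G^+_0$, so $\bpconj(\bprmodjhw)$ has bijective $G^-_0$, equivalently it is $\bpsfsymb^{\pm1}$ of another inverse-reduction module). Part \ref{it:infdimhw} then follows from part \ref{it:infdimchw} by applying $\bpconj$.

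Uniqueness of $(h,w)$ in both parts is immediate from the injectivity of the assignment $(h,w)\mapsto(\Delta,\omega)$ for fixed $[j]$, which in turn follows from $\Delta=h+\kappa$ and the fact that $\omega$ depends on $w$ linearly with nonzero coefficient $2\alpha_{\kk} = \frac{2(\kk+3)^{3/2}}{\sqrt3}\ne0$ (here $\kk\ne-3$). The restriction to $\bpminmoduv$-modules in the minimal-model statements then comes from \cref{prop:completeminmod}: if the \chw\ (or \hw) module is a $\bpminmoduv$-module, so is the $\bprmodjhw$ it embeds in (resp. surjects from), whence $(h,w)\in\irrepsuv$ by the argument of \cref{prop:completeminmod} — a submodule or quotient of $\bprmodjhw$ being a $\bpminmoduv$-module forces $\bprmodjhw$ itself to be one, since the maximal ideal $\bpmaxpropk$ annihilates it iff it annihilates the nonzero subquotient.

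The main obstacle I anticipate is verifying carefully that the top space of $\bprmodjhw$ contains, for the appropriate weight, a \emph{one-dimensional} kernel of $G^-_0$ (for the \chw\ matching) — i.e. that $\bprmodjhw$ genuinely has a \chwv\ realising the given data rather than $G^-_0$ acting injectively on the whole top space. This is where the \infdim ity of the top space of $\bpmodjD$ (equivalently of $\bpcmodjD$) is used crucially: it forces the relevant "$G^-_0$-string" through the top space of $\bprmodjhw$ to terminate, producing a \chwv. Equivalently, one must rule out that the target $\bprmodjhw$ is itself irreducible with a bi-infinite weight string, which would be incompatible with containing a \hw\ or \chw\ subquotient of infinite top space. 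I would handle this by the string-function/character bookkeeping already used in \cref{lem:stringfns}, or more directly by noting that $G^+_0 G^-_0$ acts on the top space as the fixed polynomial in $J_0,L_0,\Omega$ from \eqref{eq:commGG}–\eqref{eq:cubiccasimir}, whose value at the relevant $J_0$-eigenvalue must vanish exactly when the string terminates; the \infdim ity hypothesis guarantees it does.
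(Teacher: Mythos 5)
Your overall strategy---pin down a unique $(h,w)$ by matching $[j]$, $\Delta$ and the $\Omega$-eigenvalue (equivalently, the \chwv\ condition $\alpha_{\kk}w+P_{\kk}(j+1,h)=0$), realise the \chwm\ as a submodule and the \hwm\ as a quotient of $\bprmodjhw$, and treat the minimal-model refinement separately---is the paper's, and your eigenvalue matching does produce the correct $(h,w)$. But three steps as written would fail. First, in part~\ref{it:infdimchw} you cannot simply ``extend'' a map \emph{from} the irreducible \chwm\ \emph{into} $\bprmodjhw$: the homomorphism that exists a priori goes the other way, from the \chw\ submodule $\Mod{C}$ generated by the \chwv\ of $\bprmodjhw$ \emph{onto} the irreducible $\bpcmod{j+1}{\Delta}$, and a map in your direction exists only after one proves that this surjection is an isomorphism. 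The paper does this by combining almost irreducibility of $\bprmodjhw$ (\cref{thm:iqhrmod}\ref{it:almostirred}) with the observation that the top spaces of $\Mod{C}$ and $\bpcmod{j+1}{\Delta}$ carry exactly the same weights---which is precisely where the \infdim ity hypothesis enters, a point your sketch leaves implicit.

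Second, the route you say you would present for part~\ref{it:infdimhw} does not work: when $\bprmodjhw$ is reducible, $\bpconj(\bprmodjhw)$ has a \emph{\hw} submodule and a \chw\ quotient, whereas any relaxed module of the form $\bpsfsymb^{\ell}(\bprmod{j'}{h'}{w'})$ (necessarily $\ell=0$, since nonzero spectral flow destroys the lower bound on conformal weights) has bijective $G^+_0$ and hence only \chw\ submodules; so $\bpconj(\bprmodjhw)$ is never of that form and part~\ref{it:infdimhw} cannot be deduced from part~\ref{it:infdimchw} by conjugation. The missing argument is the direct one: one must show that the quotient $\Mod{Q}=\bprmodjhw/\Mod{C}$ is \emph{generated} by its \hwv---using that $\bprmodjhw$ is generated by its top space and that, by the \infdim ity hypothesis and \cref{prop:zhuconsequences}\ref{it:1dimwtspaces}, the top space of $\bpmodjD$ exhausts that of $\Mod{Q}$---so that $\bpmodjD$ is a genuine quotient of $\bprmodjhw$ rather than merely a subquotient. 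Third, your justification of the minimal-model refinement, namely that $\bpmaxpropk$ annihilates $\bprmodjhw$ if and only if it annihilates the nonzero subquotient, is false as a general principle: ideals routinely annihilate proper quotients without annihilating the ambient module. The correct argument (following \cite[Prop.~4.22]{FehCla20}) uses that $\zhu{\ubpk}$ is noetherian, so its maximal ideal is generated by finitely many $J_0$-eigenvectors, together with the facts that $\bprmodjhw$ is generated by any top-space vector of sufficiently negative $J_0$-eigenvalue and that its maximal submodule is supported at $J_0$-eigenvalues exceeding $j$.
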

\begin{proof}
	As in the proof of \cref{prop:completeuniv}, we search for a \chwv\ of weight $(j+1,\Delta)$ among the \rhwvs\ $\ee^{-b+(j'+n+\kappa)c} \otimes v_{h,w}$, $n \in \ZZ$, of $\bprmod{j'}{h}{w}$.
	Calculation with \eqref{eq:iqhr} shows that the weights match if we take $j'+n=j+1$, hence $[j']=[j]$, and $h=\Delta-\kappa$, while being a \chwv\ fixes $w$ uniquely in terms of $j$ and $\Delta$, by \eqref{eq:actionofG-}.
	This vector then generates a \chw\ submodule $\Mod{C}$ of $\bprmodjhw$.
	Evidently, $\bpcmod{j+1}{\Delta}$ is a quotient of $\Mod{C}$.
	However, every nonzero submodule of $\bprmodjhw$ has a nonzero intersection with its top space (\cref{thm:iqhrmod}\ref{it:almostirred}), hence the same is true for its submodule $\Mod{C}$.
	If $\bpcmod{j+1}{\Delta} \ncong \Mod{C}$, then $\Mod{C}$ has a submodule whose intersection with its top space is nonzero.
	However, this is impossible as the top spaces of both $\bpcmod{j+1}{\Delta}$ and $\Mod{C}$ have the same weights, $(j+n+1,\Delta)$ for all $n\ge0$, appearing with the same multiplicities, while $\bpcmod{j+1}{\Delta}$ is irreducible.
	We conclude that $\bpcmod{j+1}{\Delta} \ncong \Mod{C}$, proving \ref{it:infdimchw} for $\ubpk$.

	For \ref{it:infdimhw}, note that the top space of the quotient $\Mod{Q} = \bprmodjhw / \Mod{C}$ has weights $(j-n,\Delta)$, for all $n\ge0$.
	Consequently, $\Mod{Q}$ has a \hwv\ of weight $(j,\Delta)$.
	Let $\Mod{H}$ be the \hw\ submodule of $\Mod{Q}$ generated by this \hwv.
	As $\bpmodjD$ is irreducible, it is a quotient of $\Mod{H}$.
	Because its top space is \infdim, its top space's weights precisely match those of $\Mod{Q}$, hence so do those of $\Mod{H}$.
	By \cref{prop:zhuconsequences}\ref{it:1dimwtspaces}, the top spaces of $\Mod{H}$ and $\Mod{Q}$ therefore coincide.
	But, $\bprmodjhw$ is generated by its top space, by \cref{thm:iqhrmod}\ref{it:almostirred}, hence the same is true for $\Mod{Q}$.
	It follows that $\Mod{H} = \Mod{Q}$, hence that $\bpmodjD$ is a quotient of $\Mod{Q}$ and, thus, also of $\bprmodjhw$.
	This completes the proof for $\ubpk$-modules.

	To finish, we only need to show that $\bpcmod{j+1}{\Delta}$ or $\bpmodjD$ being a $\bpminmoduv$-module implies that $\bprmod{j'}{h}{w}$ is too.
	This is essentially \cite[Prop.~4.22]{FehCla20} (see also \cite[Thm.~5.3]{KawRel19}).
	We sketch the proof for $\bpmodjD$ for completeness, leaving that for $\bpcmod{j+1}{\Delta}$ as an exercise.

	Recall that $\bprmodjhw$ is generated by its top space.
	In fact, it is generated by any of its top space weight vectors as long as the $J_0$-eigenvalue is at most $j$.
	This follows as $G^+_0$ acts bijectively on the weight spaces of the top space while $G^-_0$ acts bijectively on those with $J_0$-eigenvalue at most $j$ (because the quotient $\bpmodjD$ is irreducible).

	Since $\zhu{\ubpk}$ is noetherian \cite{SmiCla90}, its maximal ideal is generated by a finite number of $J_0$-eigenvectors $A^{(i)}_0$, say.
	Choose a positive $n$ greater than the $J_0$-eigenvalues of the $A^{(i)}_0$ and pick a weight vector $v$ of weight $(j-n,\Delta)$ in the top space of $\bprmodjhw$.
	Then, $v$ generates $\bprmodjhw$.
	Its image $\overline{v}$ in $\bpmodjD$ is annihilated by the $A^{(i)}_0$ because $\bpmodjD$ is a $\bpminmoduv$-module.
	It follows that $A^{(i)}_0 v$ must lie in a weight space of the maximal submodule of $\bprmodjhw$, the quotient by which is $\bpmodjD$.
	However, the $J_0$-eigenvalue of $A^{(i)}_0 v$ is not greater than $j$, for all $i$, by construction.
	The weight space of the maximal submodule is therefore $0$, so $A^{(i)}_0 v = 0$ for all $i$.
	We conclude that the maximal ideal of $\zhu{\ubpk}$ annihilates a vector $v$ in the top space of $\bprmodjhw$ that generates the entire module.
	This proves that $\bprmodjhw$ is a $\bpminmoduv$-module, as desired.
\end{proof}

This implies that we can obtain a complete set of irreducible \hw\ $\bpminmoduv$-modules, with \infdim\ top spaces, by identifying the irreducible quotient of each reducible $\bprmodjhw$.
A complete set of irreducible \chwms, again with \infdim\ top spaces, is then obtained by applying the conjugation functor.
It only remains to study the irreducible \hw\ $\bpminmoduv$-modules, with \fdim\ top spaces.

\begin{proposition} \label{prop:fdimhwm}
	For $\kk \notin \set{-3} \cup \frac{1}{2} \ZZ_{\ge-3}$, the spectral flow orbit $\bpsforbit{\Mod{H}} = \set[\big]{\bpsfsymb^{\ell}(\Mod{H}) \st \ell \in \ZZ}$ of any irreducible \hw\ $\ubpk$-module $\Mod{H}$ contains:
	\begin{enumerate}
		\item\label{it:sforbhwinf} a unique \hwm\ whose top space is \infdim;
		\item\label{it:sforbchwinf} a unique \chwm\ whose top space is \infdim;
		\item\label{it:sforbhwfin} at most two \hwms\ with \fdim\ top spaces.
	\end{enumerate}
\end{proposition}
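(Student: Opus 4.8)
The plan is to control the whole spectral flow orbit through the \emph{extremal profile} of $\Mod{H}$. Since $\kk \notin \set{-3} \cup \frac{1}{2}\ZZ_{\ge-3}$ rules out $\kk = -3, -1, -\frac{3}{2}$, \cref{lem:extwtsexist,lem:neighbours} apply to every irreducible module in $\wcatk$: such an $\Mod{M}$ has an extremal weight $(m,\Delta_m(\Mod{M}))$ for each $J_0$-eigenvalue $m$, these $m$ filling a complete coset in $\CC/\ZZ$, and the increments $\delta_m(\Mod{M}) = \Delta_{m+1}(\Mod{M}) - \Delta_m(\Mod{M})$ form a weakly increasing, $\ZZ$-valued function of $m$. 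Its limits $\delta_{\pm\infty}(\Mod{M}) \in \ZZ \cup \set{\pm\infty}$ therefore exist, \eqref{eq:bpsfweight} shows that $\bpsfsymb^{\ell}$ shifts the profile uniformly ($\delta_{m+\kappa\ell}(\bpsfsymb^{\ell}(\Mod{M})) = \delta_m(\Mod{M}) + \ell$), and $\bpconj$ reverses the $J_0$-axis and negates $\delta$. In particular $\delta_{+\infty}(\Mod{M}) - \delta_{-\infty}(\Mod{M}) \ge 0$ is an invariant of the orbit, preserved also by $\bpconj$.

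The next step is the dictionary between the profile and the relaxed structure. Any vector of minimal conformal weight in a lowest-energy submodule is a \rhwv, so $\Mod{M}$ is \hw\ exactly when its profile attains both a nonpositive value and a value $\ge 1$ --- at the transition point one finds a $J_0$- and $L_0$-eigenvector killed by $G^+_0$ and all positive modes, which generates $\Mod{M}$ by irreducibility. Such an $\Mod{M}$ has \infdim\ top space iff $\delta_{-\infty}(\Mod{M}) = 0$, and \fdim\ top space of dimension $N$ iff its profile has a maximal run of $N-1$ consecutive zeros whose left neighbour is $\le -1$. Applying $\bpconj$ gives the mirror statements for \chwms\ (and shows a \fdim-top module is simultaneously \hw\ and \chw), while $\Mod{M}$ is \fr\ iff $\delta \equiv 0$.

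Now fix the irreducible \hwm\ $\Mod{H}$, so $a := \delta_{-\infty}(\Mod{H}) \le 0$ and $b := \delta_{+\infty}(\Mod{H}) \ge 1$. Granting for the moment that $a$ and $b$ are finite, the dictionary and the shift rule give: $\bpsfsymb^{\ell}(\Mod{H})$ is \hw\ precisely for $\ell \in \set{1-b,\dots,-a}$; among these, $\ell = -a$ is the unique choice giving a member of $\bpsforbit{\Mod{H}}$ that is \hw\ with \infdim\ top space, which is \ref{it:sforbhwinf}. Symmetrically $\ell = -b$ gives the unique \chwm\ with \infdim\ top space, which is \ref{it:sforbchwinf}. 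The remaining \hwms\ in the orbit are $\bpsfsymb^{\ell}(\Mod{H})$ for $\ell \in \set{1-b,\dots,-a-1}$, hence there are $b-a-1$ of them, and \ref{it:sforbhwfin} is now equivalent to the bound $b - a \le 3$ (together with the finiteness of $a$ and $b$).

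This last point is the heart of the argument and the only place the explicit embedding \eqref{eq:iqhr} is needed. By \cref{prop:infdimhwm}\ref{it:infdimhw} the infinite-top member $\bpsfsymb^{-a}(\Mod{H})$ is a quotient $\bprmodjhw/\Mod{C}$ of a relaxed module, and $b - a = \delta_{+\infty}(\bpsfsymb^{-a}(\Mod{H}))$ is exactly the eventual growth rate of the minimal conformal weight of this quotient as the $J_0$-eigenvalue increases past the highest weight. On $\bprmodjhw$ one has $G^+_0 = \ee^c_0 \otimes \wun$ while $G^-_0 = \ee^{-c}_0 \otimes \bigl(\tfrac{(\kk+3)^{3/2}}{\sqrt{3}}W + \tfrac{1}{2}(\kk+2)(\kk+3)\pd T\bigr) + (\kk+3)\,a_{-1}\ee^{-c}_0 \otimes T - \bigl(a_{-1}^3 + 3(\kk+2)a_{-2}a_{-1} + 2(\kk+2)^2 a_{-3}\bigr)\ee^{-c}_0 \otimes \wun$, so the operator lowering $J_0$ by one is a polynomial of degree $3$ in the lattice modes $a_{-r}$ --- equivalently, the cubic Casimir of \cref{prop:zhuconsequences} governs the top space of $\bprmodjhw$ by a degree-$3$ polynomial, reflecting the third-order pole in the $G^+(z)G^-(w)$ \ope. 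The plan is to track, conformal-weight layer by conformal-weight layer, which vectors of $\bprmodjhw$ survive in the quotient by $\Mod{C}$, and to read off that the defect $\Delta_{m+1} - \Delta_m$ of the quotient is bounded (so $b$ is finite) and never exceeds $3$; finiteness of $a$ follows by applying the same analysis to $\bpconj(\Mod{H})$. I expect this explicit bookkeeping with \eqref{eq:iqhr} to be the main obstacle, the remainder being purely the combinatorics of a weakly increasing profile.
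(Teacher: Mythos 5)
Your reduction of the statement to the combinatorics of the extremal profile is sound: the dictionary between the weakly increasing increments $\delta_m$ of \cref{lem:extwtsexist,lem:neighbours} and the \hw/\chw\ structure is correct, $\bpsfsymb^{\ell}$ does shift the profile by $\ell$, and parts \ref{it:sforbhwinf}--\ref{it:sforbhwfin} do all follow from the finiteness of $a=\delta_{-\infty}(\Mod{H})$ and $b=\delta_{+\infty}(\Mod{H})$ together with the bound $b-a\le 3$. The problem is that this bound \emph{is} the proposition, and you do not prove it: you only announce a plan to track, layer by layer, which vectors of $\bprmodjhw$ survive in the quotient by $\Mod{C}$ and to ``read off'' the bound from the cubic dependence of $G^-_0$ on the lattice modes in \eqref{eq:iqhr}. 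No such bookkeeping is carried out, and the degree of $G^-_0$ in the $a_{-r}$ does not by itself control the growth of the minimal conformal weights of a quotient. There is also a circularity in the route you choose: \cref{prop:infdimhwm}\ref{it:infdimhw} can only be invoked once you know the orbit contains a \hwm\ with \infdim\ top space, i.e.\ once you know $a$ is finite, which is precisely part \ref{it:sforbhwinf}; deferring the finiteness of $a$ to ``the same analysis for $\bpconj(\Mod{H})$'' merely transfers the same unproved finiteness to the conjugate module.

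The paper closes this gap with a short Zhu-algebra computation that needs neither the embedding \eqref{eq:iqhr} nor the realisation of \cref{prop:infdimhwm}. If the top space of $\bpmodjD$ has finite dimension $n$, then $0=\comm{G^+_0}{(G^-_0)^n}v = n\,g_n(j,\Delta)\,(G^-_0)^{n-1}v$ with $g_n$ the explicit quadratic of \eqref{eq:deffg}, so $g_n(j,\Delta)=0$. If $\bpsfsymb(\bpmodjD)\cong\bpmod{j-n+1+\kappa}{\Delta+j-n+1}$ again has a top space of finite dimension $m$, the difference of the two vanishing conditions factors as $(3j+3-m-2n)(\kk+3-m-n)$; the level restriction kills the second factor and leaves the linear constraint $3j+3-m-2n=0$. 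A third consecutive \fdim\ top space would force $2(\kk+3)=\ell+m+n$, contradicting $\kk\notin\frac{1}{2}\ZZ_{\ge-3}$. This simultaneously yields existence (hence, in your language, finiteness of $a$ and $b$) and the bound of at most two \fdim\ tops, with uniqueness and the \chw\ statement then following from conjugation and the dihedral relation \eqref{eq:dihedral}. To salvage your approach you would need to replace the sketched bookkeeping by an argument of this kind.
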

\begin{proof}
	We start with some choice of highest weight $(j,\Delta) \in \CC^2$ and aim to show that the spectral flow orbit of $\bpmodjD$ has a \hwm\ with an \infdim\ top space.
	If $\bpmodjD$ already satisfies this requirement, then there is nothing to prove.
	So suppose that its top space is \fdim\ and let $v$ denote its \hwv.
	Then, $(G^-_0)^n v = 0$ for some minimal $n\ge1$.
	We set
	\begin{equation} \label{eq:deffg}
		\begin{aligned}
			f(j,\Delta) &= 3j^2 - (\kk+3)\Delta - (2\kk+3)j \\ \text{and} \quad
		g_n(j,\Delta) &= \frac{1}{n} \sum_{m=0}^{n-1} f(j-m,\Delta) = 3j^2 - (\kk+3)\Delta - (2\kk+3n)j + (n-1)(\kk+n+1),
		\end{aligned}
	\end{equation}
	so that \eqref{eq:commGG} gives
	\begin{equation}
		\begin{split}
			0
			&= \comm[\big]{G^+_0}{(G^-_0)^n} v
			= \sum_{m=0}^{n-1} (G^-_0)^{n-1-m} \comm[\big]{G^+_0}{G^-_0} (G^-_0)^m v
			= \sum_{m=0}^{n-1} (G^-_0)^{n-1-m} f(J_0,L_0) (G^-_0)^m v \\
			&= \sum_{m=0}^{n-1} f(j-m,\Delta) (G^-_0)^{n-1} v
			= n g_n(j,\Delta) (G^-_0)^{n-1} v,
		\end{split}
	\end{equation}
	hence $g_n(j,\Delta) = 0$.

	As $\bpmodjD$ is irreducible with \fdim\ top space, its image under the spectral flow functor $\bpsfsymb$ is also irreducible and \hw, with \hwv\ $\bpsfsymb \brac[\big]{(G^-_0)^{n-1} v}$.
	\cref{eq:bpsfweight} then gives
	\begin{equation} \label{eq:hwmsf1}
		\bpsfsymb(\bpmodjD) \cong \bpmod{j-n+1+\kappa}{\Delta+j-n+1}.
	\end{equation}
	If $\bpsfsymb(\bpmodjD)$ has an \infdim\ top space, then we are done.
	If not, then $g_m(j-n+1+\kappa,\Delta+j-n+1) = 0$ for some minimal $m\ge1$.
	However, this implies that
	\begin{equation}
		0 = g_m(j-n+1+\kappa,\Delta+j-n+1) - g_n(j,\Delta) = (3j+3-m-2n)(\kk+3-m-n).
	\end{equation}
	Noting that the last factor on the \rhs\ can only vanish if $\kk$ lies in $\ZZ_{\ge-1} \subset \frac{1}{2} \ZZ_{\ge-3}$, we conclude that
	\begin{equation} \label{eq:hwmsf2}
		h_{m,n}(j) = 3j+3-m-2n = 0.
	\end{equation}

	Continuing, $\bpsfsymb(\bpmodjD)$ having a \fdim\ top space means that $\bpsfsymb^2(\bpmodjD) \cong \bpsfsymb(\bpmod{j-n+1+\kappa}{\Delta+j-n+1})$ is another irreducible \hwm.
	If its top space were also \fdim, then we would conclude as above that $h_{\ell,m}(j-n+1+\kappa) = 0$ for some minimal $\ell\ge1$.
	However, this contradicts $\kk \notin \frac{1}{2} \ZZ_{\ge-3}$:
	\begin{equation} \label{eq:hwmsf3}
		0 = h_{\ell,m}(j-n+1+\kappa) - h_{m,n}(j) = 2(\kk+3)-\ell-m-n.
	\end{equation}
	This establishes the existence of a \hwm\ with \infdim\ top space in $\bpsforbit{\bpmodjD}$.

	We next claim that $\bpsforbit{\bpmodjD}$ also contains a \chwm\ with \infdim\ top space.
	This follows from the easily checked fact that applying $\bpsfsymb$ to an irreducible \chwm\ results in a \hwm:
	\begin{align*}
			&\quad \bpsforbit{\bpmodjD}\ \text{contains an irreducible \hwm} \\
			&\Ra \quad \bpconj \brac[\big]{\bpsforbit{\bpmodjD}}\ \text{contains an irreducible \chwm}\ \Mod{C} \\
			&\Ra \quad \Mod{H} = \bpsfsymb(\Mod{C})\ \text{is an irreducible \hwm\ in}\ \bpconj \brac[\big]{\bpsforbit{\bpmodjD}} \\
			&\Ra \quad \bpsforbit{\Mod{H}} = \bpconj \brac[\big]{\bpsforbit{\bpmodjD}}\ \text{contains an irreducible \hwm}\ \Mod{H}'\ \text{with \infdim\ top space} \\
			&\Ra \quad \bpconj(\Mod{H}')\ \text{is an irreducible \chwm\ with \infdim\ top space in}\ \bpsforbit{\bpmodjD}.
	\end{align*}

	Finally, the uniqueness of this \hw\ and \chwm\ in $\bpsforbit{\bpmodjD}$ follows from the fact that applying $\bpsfsymb^n$, $n>0$ ($n<0$), to a \hw\ $\ubpk$-module (\chw\ $\ubpk$-module) with \infdim\ top space results in a $\ubpk$-module that is not \rhw.
	This proves \ref{it:sforbhwinf} and \ref{it:sforbchwinf}, while \ref{it:sforbhwfin} now follows from the contradiction of \cref{eq:hwmsf3}.
\end{proof}

\begin{remark}
	Note that $\kk \in \frac{1}{2} \ZZ_{\ge-3}$ is equivalent to $\uu\ge2$ and $\vv=1$ or $2$.
	Moreover, for these $\uu$ and $\vv$, every irreducible \hw\ $\bpminmoduv$-module has a \fdim\ top space \cite{AraRat10,AdaCla19}.
	In particular, the spectral flow orbits never include modules with \infdim\ top spaces.
\end{remark}

It follows from \cref{prop:fdimhwm} that we will obtain a complete set of irreducible \hw\ $\ubpk$- or $\bpminmoduv$-modules with \fdim\ top spaces, the latter assuming $\uu\ge2$ and $\vv\ge3$, by looking at the spectral flow orbits of the irreducible \hwms\ with \infdim\ top spaces.
Indeed, it follows from the above analysis that if $\bpmodjD$ has an \infdim\ top space, then the only possible candidates for \fdim\ top spaces are $\bpsfsymb^{-1}(\bpmodjD)$ and $\bpsfsymb^{-2}(\bpmodjD)$.

We assemble the main results thus far, namely \cref{prop:zhuconsequences}\ref{it:isomorphic} as well as \cref{prop:irred=sf+relax,prop:completeuniv,prop:completeminmod,prop:infdimhwm,prop:fdimhwm}\ref{it:sforbhwfin}, as a \lcnamecref{thm:classification}.

\begin{theorem} \label{thm:classification}
	For $\kk\ne-3,-1,-\frac{3}{2}$ (coprime integers $\uu\ge2$ and $\vv\ge3$), every simple object of the category $\wcatk$ ($\wcatuv$) of generalised weight $\ubpk$-modules ($\bpminmoduv$-modules), with \fdim\ generalised weight spaces, is isomorphic to either:
	\begin{itemize}
		\item A spectral flow of an irreducible \frm\ $\bprmodjhw$ with $[j] \in \CC/\ZZ$ and $h,w \in \CC$ ($(h,w) \in \irrepsuv$).
		\item A spectral flow of an irreducible (\hw) quotient $\bpmodjD$ of a reducible \frm\ $\bprmod{j'}{h}{w}$ with $[j'] \in \CC/\ZZ$ and $h,w \in \CC$ ($(h,w) \in \irrepsuv$).
	\end{itemize}
\end{theorem}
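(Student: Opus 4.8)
The plan is simply to assemble the results already proved, arranging them so that the three possible ``shapes'' of an irreducible \rhwm\ are handled one after the other. Start from a simple object $\Mod{M}$ of $\wcatk$ (resp.\ $\wcatuv$). Since $\bpsfsymb^{\ell}$ permutes generalised weight spaces bijectively, by \eqref{eq:bpsfweight}, the class of objects of $\wcatk$ (resp.\ $\wcatuv$) is closed under spectral flow, so \cref{prop:irred=sf+relax} lets me write $\Mod{M} \cong \bpsfsymb^{\ell}(\Mod{N})$ with $\Mod{N}$ an irreducible \rhwm\ in the same category; as $\bpsfsymb^{\ell}$ is invertible it is enough to treat $\Mod{N}$. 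By \cref{prop:zhuconsequences}\ref{it:trichotomy}, $\Mod{N}$ is then \fr, \hw, or \chw, and I would dispose of these cases in turn.

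If $\Mod{N}$ is \fr, then \cref{prop:completeuniv} (for $\wcatk$) or \cref{prop:completeminmod} (for $\wcatuv$) gives $\Mod{N} \cong \bprmodjhw$ with $[j] \in \CC/\ZZ$ and $h,w \in \CC$ (resp.\ $(h,w) \in \irrepsuv$), which is the first alternative. If $\Mod{N}$ is \hw\ or \chw, the idea is to move within the spectral flow orbit $\bpsforbit{\Mod{N}}$ to an irreducible \hwm\ with an \infdim\ top space: when $\Mod{N}$ is \chw\ the computation in the proof of \cref{prop:fdimhwm} shows $\bpsfsymb(\Mod{N})$ is \hw\ (alternatively, use \cref{prop:infdimhwm}\ref{it:infdimchw} together with the dihedral relation \eqref{eq:dihedral}), so I may assume $\Mod{N}$ is \hw; then \cref{prop:fdimhwm}\ref{it:sforbhwinf} supplies a \hwm\ with \infdim\ top space in $\bpsforbit{\Mod{N}}$, and \cref{prop:infdimhwm}\ref{it:infdimhw} realises it as an irreducible quotient $\bpmodjD$ of a reducible $\bprmod{j'}{h}{w}$ with $[j'] \in \CC/\ZZ$ and $h,w \in \CC$ (resp.\ $(h,w) \in \irrepsuv$). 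Thus $\Mod{N}$, and hence $\Mod{M}$, is a spectral flow of such a quotient, which is the second alternative; running through the case split shows these are the only possibilities.

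The one ingredient that is not pure bookkeeping --- and the step I expect to be the real obstacle --- is \cref{prop:fdimhwm}, namely the assertion that there is no infinite spectral flow chain of irreducible \hwms\ all of whose top spaces are \fdim. This is what guarantees that every such module lies in the spectral flow orbit of one with an \infdim\ top space, hence (via \cref{prop:infdimhwm}) of a quotient of a reducible $\bprmodjhw$. It is precisely here that the hypotheses on $\kk$ enter, through the nonvanishing of integer-coefficient expressions such as $\kk+3-m-n$ and $2(\kk+3)-\ell-m-n$ for positive integers $\ell,m,n$; I would be careful to note that for the $\bpminmoduv$ statement the restriction $\vv\ge3$ already forces $\kk+3=\uu/\vv \notin \tfrac{1}{2}\ZZ$, so that \cref{prop:fdimhwm} applies, whereas the complementary cases $\vv\in\set{1,2}$ lie outside the scope of inverse reduction and are covered separately, every irreducible module there being \hw\ up to spectral flow \cite{AraRat10,AdaCla19}.
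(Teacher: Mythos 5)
Your proposal is correct and is essentially the paper's own proof: the paper simply states that the theorem is obtained by assembling \cref{prop:zhuconsequences}, \cref{prop:irred=sf+relax}, \cref{prop:completeuniv}, \cref{prop:completeminmod}, \cref{prop:infdimhwm} and \cref{prop:fdimhwm}, which is exactly the case analysis (reduce to a \rhwm\ by spectral flow, split by the trichotomy, then handle the \fr\ case by completeness and the \hw/\chw\ cases by moving along the spectral flow orbit to an \infdim-top-space \hwm) that you spell out. Your closing remarks correctly identify \cref{prop:fdimhwm} as the step where the level hypotheses enter and why $\vv\ge3$ suffices for the minimal-model statement.
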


\begin{remark} \label{rem:classbyhworchw}
	Considering \cref{prop:fdimhwm}\ref{it:sforbchwinf} instead of \ref{it:sforbhwfin} (or applying conjugation), it is clear that we can alternatively characterise the simple objects of $\wcatk$ and $\wcatuv$ as spectral flows of irreducible \frms\ and irreducible (\chw) submodules of reducible \frms.
\end{remark}

Algorithmically, this \lcnamecref{thm:classification} allows us to classify (subject to the stated restrictions on $\kk$, $\uu$ and $\vv$) the irreducible $\ubpk$- and $\bpminmoduv$-modules in $\wcatk$ and $\wcatuv$, respectively, using inverse \qhr:
\begin{itemize}
	\item For each $(h,w)$, determine for which $[j] \in \CC/\ZZ$, $\bprmodjhw = \lmod{j} \otimes \wmodhw$ is irreducible.
	\item For each of the (up to $3$) $[j] \in \CC/\ZZ$ with $\bprmodjhw$ reducible, identify its (unique) irreducible quotient $\bpmodjD$.
	\item Apply $\bpsfsymb^{\ell}$, for all $\ell \in \ZZ$, to all the irreducible $\bprmodjhw$ and $\bpmodjD$.
\end{itemize}
\noindent We shall see how to implement this algorithm with examples in the next \lcnamecref{sec:examples}.

\begin{remark}
	A natural question is whether inverse \qhr\ also allows one to analyse the possibility of nonsplit extensions between irreducible modules.
	For example, for nondegenerate levels, can one use the semisimplicity of the category of $\wminmoduv$-modules to prove the semisimplicity of the analogue of the BGG category $\categ{O}_{\kk}$ for $\bpminmoduv$?
	The latter fact was in fact established in \cite{FehCla20}, but by using minimal \qhr\ to relate it back to the semisimplicity \cite{AraRat12} of $\categ{O}_{\kk}$ for the simple affine \voa\ $\sslk$.
	However, we expect that this method will be difficult to generalise.
\end{remark}

\section{Examples} \label{sec:examples}

We apply the general results of the previous \cref{sec:reclass} to $\bpminmoduv$ for two classes of $(\uu,\vv)$.
The first, $\uu,\vv\ge3$, corresponds to $\kk$ being nondegenerate.
The second, $(\uu,\vv) = (2,3)$, corresponds to the nonadmissible level $\kk=-\frac{7}{3}$.

\subsection{Nondegenerate levels} \label{sec:nondeg}

In this \lcnamecref{sec:nondeg}, we classify irreducible \rhw\ $\bpminmoduv$-modules when $\uu,\vv\ge3$ ($\kk$ is nondegenerate).
This result was originally obtained in \cite{FehCla20} using properties of the minimal \qhr\ functor.
Here, we obtain it straightforwardly using inverse \qhr\ and lift it to a classification of the simple objects of $\wcatuv$, again when $\uu,\vv\ge3$.

Recall that for nondegenerate levels, $\irrepsuv$ is isomorphic, via the parametrisations $h_{[r,s]}$ and $w_{[r,s]}$ of \eqref{eq:hw(rs)}, to $(\pwlat{\uu-3} \times \pwlat{\vv-3}) / \ZZ_3$, where the $\ZZ_3$-action is effected by the permutation $\nabla$ of \eqref{eq:Z3action}.
We define
\begin{equation}
	j_{(r,s)} = \tfrac{1}{3} \brac[\big]{r_2-r_1-\tfrac{\uu}{\vv}(s_2-s_1-1)}, \quad
	(r,s) \in \pwlat{\uu-3} \times \pwlat{\vv-3},
\end{equation}
and recall that $[r,s] = \set[\big]{(r,s), \nabla(r,s), \nabla^2(r,s)}$.
\begin{theorem} \label{thm:nondegclass}
	Let $\kk$ be nondegenerate, so that $\uu,\vv\ge3$.
	Then, every irreducible $\bpminmoduv$-module in $\wcatuv$ is isomorphic to one, and only one, of the following:
	\begin{itemize}
		\item The $\bpsfsymb^{\ell} \brac[\big]{\bprmod{j}{h_{[r,s]}}{w_{[r,s]}}}$ with $\ell \in \ZZ$, $[r,s] \in (\pwlat{\uu-3} \times \pwlat{\vv-3})/\ZZ_3$ and $[j] \notin \set[\big]{[j_{(r',s')}] \st (r',s') \in [r,s]}$.
		\item The $\bpsfsymb^{\ell} \brac[\big]{\bpmod{j_{(r,s)}-1}{h_{[r,s]}+\kappa}}$ with $\ell \in \ZZ$ and $(r,s) \in \pwlat{\uu-3} \times \pwlat{\vv-3}$.
	\end{itemize}
\end{theorem}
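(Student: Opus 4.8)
The plan is to run the algorithm described just after \cref{thm:classification}, specialised to nondegenerate levels. By \cref{thm:classification}, using the parametrisation \cref{eq:hw(rs)} of $\irrepsuv$ by $(\pwlat{\uu-3}\times\pwlat{\vv-3})/\ZZ_3$, every simple object of $\wcatuv$ is a spectral flow of an irreducible fully relaxed $\bprmod{j}{h_{[r,s]}}{w_{[r,s]}}$ or of an irreducible highest-weight quotient of a reducible one. So the three things to pin down are: (i) for each $[r,s]$, the precise set of $[j]\in\CC/\ZZ$ for which $\bprmod{j}{h_{[r,s]}}{w_{[r,s]}}$ is reducible; (ii) the identification of the highest-weight quotients attached to the reducible modules; and (iii) the absence of repetitions in the resulting list.

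For (i) I would first observe, using \cref{thm:iqhrmod} (indecomposability, almost irreducibility, and bijectivity of $G^+_0$), that $\bprmodjhw$ is reducible if and only if $G^-_0$ fails to act injectively on its top space: a proper submodule meets the top space in a proper nonzero submodule for the zero-mode algebra $\zhu{\ubpk}$, which cannot happen if both $G^+_0$ and $G^-_0$ are injective there, while conversely any top-space vector killed by $G^-_0$ is a conjugate highest-weight vector generating a proper submodule (proper because $\bprmodjhw$ is fully relaxed). By \cref{eq:actionofG-}, non-injectivity of $G^-_0$ on the top space is equivalent to $\alpha_{\kk}w + P_{\kk}(j+n,h)=0$ for some $n\in\ZZ$; since $P_{\kk}(\cdot,h)$ is a cubic, this singles out at most three cosets $[j]$, consistently with \cref{thm:iqhrmod}. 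The heart of the proof is then a direct verification, substituting the explicit formulas \cref{eq:hw(rs)}, that for $(h,w)=(h_{[r,s]},w_{[r,s]})$ the three roots of $\alpha_{\kk}w + P_{\kk}(j,h)=0$ are exactly $j_{(r,s)}$, $j_{\nabla(r,s)}$, $j_{\nabla^2(r,s)}$; the $\ZZ_3$-symmetry of \cref{eq:hw(rs),eq:Z3action} reduces this to checking that $j_{(r,s)}$ itself is a root. A convenient consistency test is Vieta's formula: the sum of the roots is $3(\kk+2)-3\kappa=\kk+3=\tfrac{\uu}{\vv}$, which matches $j_{(r,s)}+j_{\nabla(r,s)}+j_{\nabla^2(r,s)}$. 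This establishes the first bullet of the statement.

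For (ii) I would fix $(r,s)$ and apply \cref{prop:infdimhwm} to the reducible module $\bprmod{j_{(r,s)}}{h_{[r,s]}}{w_{[r,s]}}$: it yields a conjugate highest-weight submodule and a highest-weight quotient, and tracking eigenvalues (the kernel of $G^-_0$ lies at $J_0$-eigenvalue $j_{(r,s)}$, so the quotient's highest-weight vector has $J_0$-eigenvalue $j_{(r,s)}-1$, and $L_0$ acts there by $h_{[r,s]}+\kappa$ via \cref{prop:completeuniv}) identifies that quotient as $\bpmod{j_{(r,s)}-1}{h_{[r,s]}+\kappa}$, which has an infinite-dimensional top space. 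Conversely, by \cref{prop:fdimhwm} the spectral flow orbit of any irreducible highest-weight $\bpminmoduv$-module contains a unique highest-weight module with infinite-dimensional top space; by \cref{prop:infdimhwm} this module is a quotient of a reducible $\bprmod{j'}{h}{w}$ with $(h,w)\in\irrepsuv$, hence by part (i) it is one of the $\bpmod{j_{(r,s)}-1}{h_{[r,s]}+\kappa}$. Since irreducible conjugate highest-weight modules are themselves spectral flows of highest-weight ones, \cref{thm:classification} then shows the two displayed families exhaust the simple objects of $\wcatuv$.

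For (iii) I would prove uniqueness by separating spectral flow orbits. A fully relaxed module has all of its extremal weights at the conformal weight of its top space, so the increment $\delta_j(\Mod{M})=\Delta_{j+1}-\Delta_j$ from the proof of \cref{prop:irred=sf+relax} is identically $0$, and $\bpsfsymb^{\ell}$ turns it into the constant $\ell$; hence each fully relaxed orbit contains exactly one module of the form $\bprmod{j}{h}{w}$, and by \cref{prop:zhuconsequences}\ref{it:isomorphic} together with \cref{prop:completeuniv} these are distinguished by $([j],\Delta,\omega)$, equivalently by $([j],[r,s])$ (using injectivity of $[r,s]\mapsto(h_{[r,s]},w_{[r,s]})$ and $\alpha_{\kk}\ne0$). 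Each highest-weight orbit contains a unique infinite-dimensional-top-space representative (\cref{prop:fdimhwm}), and such a module is determined by $(j,\Delta)$; distinct $\ZZ_3$-orbits give distinct $h_{[r,s]}$, and within an orbit the $j_{(r,s)}$ are pairwise distinct, so the indexing by $(r,s)\in\pwlat{\uu-3}\times\pwlat{\vv-3}$ carries no redundancy. Finally a fully relaxed orbit never coincides with a highest-weight orbit, since $\delta_j$ is constant on the former but not the latter. I expect step (i) to be the main obstacle: it is the one place requiring genuine computation, namely a careful but elementary manipulation of the quadratic and cubic expressions in \cref{eq:hw(rs)} to show that the three $j_{(r,s)}$ are the roots of the cubic; everything else is organisational, assembling the structural results already in hand.
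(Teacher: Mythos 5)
Your proposal follows essentially the same route as the paper: reducibility of $\bprmod{j}{h_{[r,s]}}{w_{[r,s]}}$ is detected via the action \eqref{eq:actionofG-} of $G^-_0$ on the top space, the resulting cubic in $j$ is shown to have the $j_{(r',s')}$, $(r',s')\in[r,s]$, as its roots (the paper simply records the factorisation \eqref{eq:threesols}; your reduction to checking a single root via the $\ZZ_3$-invariance of $(h_{[r,s]},w_{[r,s]})$, together with the Vieta consistency check, is a sound way to organise that computation), the \hw\ quotients are identified through \cref{prop:infdimhwm}, and everything is assembled with \cref{thm:classification}.

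The one genuine flaw is in your uniqueness argument for the second family: the claim that distinct $\ZZ_3$-orbits give distinct $h_{[r,s]}$ is false in general. From \eqref{eq:hw(rs)}, swapping the indices $1\leftrightarrow2$ in both $r$ and $s$ preserves $h_{[r,s]}$ while negating $w_{[r,s]}$, and this transposition generically lands in a different cyclic orbit; so two distinct classes $[r,s]\ne[r',s']$ can share the same $h$. The repair is easy and stays within your framework: on a \hwv\ of weight $(j,\Delta)$ the eigenvalue of the cubic Casimir $\Omega$ is already determined by $(j,\Delta)$ via \eqref{eq:commGG} and \eqref{eq:cubiccasimir}, and since $\omega=2\alpha_{\kk}w+Q_{\kk}(h)$ with $\alpha_{\kk}\ne0$, the highest weight $(j_{(r,s)}-1,h_{[r,s]}+\kappa)$ determines the full triple $([j_{(r,s)}],h_{[r,s]},w_{[r,s]})$, hence $[r,s]$ by injectivity of the parametrisation \eqref{eq:hw(rs)}, and then $(r,s)$ itself by the pairwise distinctness of the $j_{(r',s')}$ within an orbit --- which you assert but should verify explicitly, e.g.\ $j_{\nabla(r,s)}-j_{(r,s)}=r_1+1-\tfrac{\uu}{\vv}(s_1+1)\notin\ZZ$ by coprimality of $\uu$ and $\vv$ (this same computation is what guarantees the three reducible \frms\ per class are genuinely three). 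With that amendment your argument is complete; for what it is worth, the paper itself leaves most of this ``only one'' bookkeeping implicit, so your part (iii) is, modulo the above, a useful elaboration rather than a detour.
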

\begin{proof}
	As $\bprmodjhw$ is almost irreducible with a top space possessing $1$-dimensional weight spaces (\cref{prop:zhuconsequences}\ref{it:1dimwtspaces}) and a bijective action of $G^+_0$ (\cref{thm:iqhrmod}\ref{it:G+inj}), it is reducible if and only if it has a \chwv\ in its top space.
	We test for such vectors by letting $G^-_0$ act, as per \eqref{eq:iqhr}, on the top space weight vector $\ee^{-b+(j+\kappa)c} \otimes v_{h,w}$.
	The result is
	\begin{equation}
		G^-_0 (\ee^{-b+(j+\kappa)c} \otimes v_{h,w}) = \brac[\big]{\alpha_{\kk} w + P_{\kk}(j,h)} \ee^{-b+(j-1+\kappa)c} \otimes v_{h,w},
	\end{equation}
	where $\alpha_{\kk}$ and $P_{\kk}$ were defined in \eqref{eq:actionofG-}.
	Substituting the parametrisations \eqref{eq:hw(rs)} and simplifying, we obtain
	\begin{equation} \label{eq:threesols}
		\alpha_{\kk} w_{[r,s]} + P_{\kk}(j,h_{[r,s]}) = -\ \prod_{\mathclap{(r',s') \in [r,s]}}\ (j-j_{(r',s')}),
	\end{equation}
	whence $\bprmod{j}{h_{[r,s]}}{w_{[r,s]}}$ is reducible if and only if $[j] = [j_{(r',s')}]$ for some $(r',s') \in [r,s]$.

	Fixing $[r,s] \in (\pwlat{\uu-3} \times \pwlat{\vv-3})/\ZZ_3$, hence $(h_{[r,s]},w_{[r,s]}) \in \irrepsuv$, it is easy to check that the three zeroes $j_{(r',s')}$, $(r',s') \in [r,s]$ of \eqref{eq:threesols} belong to distinct cosets in $\CC/\ZZ$.
	For example,
	\begin{equation}
		j_{\nabla(r,s)} - j_{(r,s)} = r_1+1 - \tfrac{\uu}{\vv} (s_1+1)
	\end{equation}
	is not an integer because $\uu$ and $\vv$ are coprime and $0 \le s_1 \le \vv-3$.
	We therefore have three distinct reducible \frms\ $\bprmod{j_{(r',s')}}{h_{[r,s]}}{w_{[r,s]}}$, $(r',s') \in [r,s]$, for each choice of $[r,s]$.
	Since $j_{(r',s')}$ is the weight of the \chwv\ in the top space, the irreducible quotient of $\bprmod{j_{(r',s')}}{h_{[r,s]}}{w_{[r,s]}}$ is isomorphic to the \hw\ $\bpminmoduv$-module $\bpmod{j_{(r',s')}-1}{h_{[r,s]}+\kappa}$, by \cref{prop:infdimhwm}\ref{it:infdimhw}.
	Moreover, the top space of the latter is clearly \infdim.
	The result now follows from \cref{thm:classification}.
\end{proof}

\begin{remark}
	For $\uu,\vv\ge3$ and $[r,s] \in (\pwlat{\uu-3} \times \pwlat{\vv-3})/\ZZ_3$, it is easy to see that the \chw\ submodule of $\bprmod{j_{(r',s')}}{h_{[r,s]}}{w_{[r,s]}}$ constructed in the proof of \cref{thm:nondegclass} is irreducible, hence isomorphic to $\bpcmod{j_{(r',s')}}{h_{[r,s]}+\kappa}$.
	It is also true, but less easy to see, that
	\begin{equation}
		\ses{\bpcmod{j_{(r',s')}}{h_{[r,s]}+\kappa}}{\bprmod{j_{(r',s')}}{h_{[r,s]}}{w_{[r,s]}}}{\bpmod{j_{(r',s')}-1}{h_{[r,s]}+\kappa}}
	\end{equation}
	is exact.
	This can be shown using an analogous argument to that of \cite[Sec.~4]{KawRel18}, see \cite[Thm.~4.24]{FehCla20}.
\end{remark}

This \lcnamecref{thm:nondegclass} then classifies the irreducible $\bpminmoduv$-modules in $\wcatuv$ when $\kk$ is nondegenerate.
One may of course continue the analysis, calculating how many \hwms\ with \fdim\ top spaces are in each spectral flow orbit and identifying their highest weights explicitly.
This is straightforward and we refer the interested reader to \cite[Sec.~2.3]{FehMod21}.

\subsection{Irreducible $\bpminmod{2}{3}$-modules} \label{sec:bpsinglet}

We turn to the classification of irreducible modules in $\wcat_{2,3}$.
The level $\kk=-\frac{7}{3}$, corresponding to $\uu=2$, $\vv=3$, $\kappa=-\frac{5}{9}$ and $\ccbp = -\frac{40}{3}$, is nonadmissible but may still be tackled using inverse \qhr, see \cref{thm:iqhr}\ref{it:iqhrminmod}.
What makes this an ideal case to study is that $\cczam = -2$ for this level and so the $\zamsymb$ minimal model $\wminmod{2}{3}$ coincides with Kausch's singlet algebra \cite{KauExt91}.

The irreducible \hw\ $\wminmod{2}{3}$-modules were classified by Wang in \cite{WanCla97}, see also \cite{EhoRep92,HonAut92,AdaCla02}.
Here, we review this classification following \cite[Sec.~3.3]{CreLog13}.
First, recall that $\wminmod{2}{3}$ is a vertex subalgebra of a rank-$1$ Heisenberg vertex algebra.
The latter's Fock spaces $\fock{\lambda}$, $\lambda \in \CC$, are thus $\wminmod{2}{3}$-modules by restriction.
A little calculation shows that the \hwv\ $v_{\lambda} \in \fock{\lambda}$ satisfies
\begin{equation} \label{eq:hw(lambda)}
	T_0 v_{\lambda} = h_{\lambda} v_{\lambda},\quad h_{\lambda} = \tfrac{1}{2} \lambda(\lambda+1), \qquad \text{and} \qquad
	W_0 v_{\lambda} = w_{\lambda} v_{\lambda},\quad w_{\lambda} = -\tfrac{1}{6\sqrt{2}} \lambda(\lambda+1)(2\lambda+1).
\end{equation}
The $\fock{\lambda}$ turn out to be irreducible, as $\wminmod{2}{3}$-modules, if and only if $\lambda \notin \ZZ$.
We therefore have the identification
\begin{equation}
	\fock{\lambda} \cong \wmod{h_{\lambda}}{w_{\lambda}}, \quad \lambda \notin \ZZ.
\end{equation}
These Fock spaces are sometimes referred to as the \emph{typical} irreducible $\wminmod{2}{3}$-modules.
The $\fock{\lambda}$ with $\lambda \in \RR$ are then the \emph{standard} $\wminmod{2}{3}$-modules, according to the standard module formalism of \cite{CreLog13,RidVer14}.

For $\lambda \in \ZZ$, $\fock{\lambda}$ has a unique irreducible submodule that we shall denote by $\sing{\lambda}$.
Moreover, the following short sequence is nonsplit and exact:
\begin{equation}
	\ses{\sing{\lambda}}{\fock{\lambda}}{\sing{\lambda+1}}.
\end{equation}
The $\sing{\lambda}$ are also \hw\ and we have
\begin{equation}
	\sing{\lambda} \cong
	\begin{cases}
		\wmod{h_{\lambda}}{w_{\lambda}}, & \lambda \in \ZZ_{\ge0}, \\
		\wmod{h_{\lambda-1}}{w_{\lambda-1}} & \lambda \in \ZZ_{<0}.
	\end{cases}
\end{equation}
These are then the \emph{atypical} irreducible $\wminmod{2}{3}$-modules.

It is easy to check from \eqref{eq:hw(lambda)} that the only nontrivial coincidence $(h_{\lambda},w_{\lambda}) = (h_{\mu},w_{\mu})$, $\lambda \ne \mu$, of highest weights occurs with $(h_0,w_0) = (0,0) = (h_{-1},w_{-1})$.
A complete set of mutually nonisomorphic irreducible \hw\ $\wminmod{2}{3}$-modules is thus given by the $\wmod{h_{\lambda}}{w_{\lambda}}$ with $\lambda \in \CC \setminus \set{-1}$.

\begin{theorem} \label{thm:bp(23)class}
	\leavevmode
	\begin{enumerate}
		\item\label{it:bp(23)fr} Every irreducible \fr\ $\bpminmod{2}{3}$-module is isomorphic to one, and only one, of the $\bprmod{j}{h_{\lambda}}{w_{\lambda}}$ with $\lambda \in \CC \setminus \set{-1}$ and $[j] \notin \set[\big]{[\frac{3\lambda+5}{9}], [\frac{3\lambda+2}{9}], [-\frac{6\lambda+1}{9}]}$.
		\item\label{it:bp(23)hw} Every irreducible \hw\ $\bpminmod{2}{3}$-module with an \infdim\ top space is isomorphic to one, and only one, of the following modules:
		\begin{enumerate}[label=(\textnormal{\roman*})]
			\item\label{it:bp(23)i} The $\bpmod{(3\lambda-4)/9}{h_{\lambda}-5/9}$ with $\lambda \in \CC \setminus \brac[\big]{\set{-1} \cup (\NN+\frac{1}{3})}$.
			\item\label{it:bp(23)ii} The $\bpmod{(3\lambda-7)/9}{h_{\lambda}-5/9}$ with $\lambda \in \CC \setminus \brac[\big]{\set{-1} \cup (\NN+\frac{2}{3})}$.
			\item\label{it:bp(23)iii} The $\bpmod{-(6\lambda+10)/9}{h_{\lambda}-5/9}$ with $\lambda \in \CC \setminus \brac[\big]{\set{-1} \cup (-\NN-\frac{1}{3}) \cup (-\NN-\frac{2}{3})}$.
		\end{enumerate}
		\item\label{it:bp(23)all} Every irreducible $\bpminmod{2}{3}$-module in $\wcat_{2,3}$ is isomorphic to a spectral flow of one, and only one, of these modules.
	\end{enumerate}
\end{theorem}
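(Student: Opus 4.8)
The plan is to run the algorithm spelled out after \cref{thm:classification} with $\uu=2$, $\vv=3$, hence $\kk=-\tfrac73$ and $\kappa=-\tfrac59$, using the identification recalled above of the irreducible \hw\ $\wminmod{2}{3}$-modules with the $\wmod{h_{\lambda}}{w_{\lambda}}$, $\lambda\in\CC\setminus\set{-1}$. Inverse reduction is available at this level by \cref{thm:iqhr}\ref{it:iqhrminmod}, so \cref{prop:completeminmod} already gives that every irreducible \fr\ $\bpminmod{2}{3}$-module is of the form $\bprmod{j}{h_{\lambda}}{w_{\lambda}}$; by \cref{thm:iqhrmod}\ref{it:reducible} such a module is irreducible for all but one to three cosets $[j]\in\CC/\ZZ$, and, exactly as in the proof of \cref{thm:nondegclass}, it is reducible precisely when its top space contains a \chwv. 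To locate these cosets I would let $G^-_0$ act, via the explicit embedding \eqref{eq:iqhr}, on the top-space vector $\ee^{-b+(j+\kappa)c}\otimes v_{h_{\lambda},w_{\lambda}}$; by \eqref{eq:actionofG-} the output is $\bigl(\alpha_{\kk}w_{\lambda}+P_{\kk}(j,h_{\lambda})\bigr)\ee^{-b+(j-1+\kappa)c}\otimes v_{h_{\lambda},w_{\lambda}}$, and specialising $\alpha_{\kk}$ and $P_{\kk}$ to $\kk=-\tfrac73$ and substituting $h_{\lambda}=\tfrac12\lambda(\lambda+1)$, $w_{\lambda}=-\tfrac{1}{6\sqrt2}\lambda(\lambda+1)(2\lambda+1)$ from \eqref{eq:hw(lambda)}, the cubic in $j$ should factorise (the analogue of \eqref{eq:threesols}) as $-\bigl(j-\tfrac{3\lambda+5}{9}\bigr)\bigl(j-\tfrac{3\lambda+2}{9}\bigr)\bigl(j+\tfrac{6\lambda+1}{9}\bigr)$. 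This yields \ref{it:bp(23)fr}: $\bprmod{j}{h_{\lambda}}{w_{\lambda}}$ is reducible exactly for the three excluded cosets, and mutual non-isomorphism of the remaining \frms\ follows from \cref{prop:zhuconsequences}\ref{it:isomorphic}, since the $\Omega$-eigenvalue on the top space depends only on $h_{\lambda},w_{\lambda}$ (cf.\ the proof of \cref{prop:completeuniv}) and $\lambda\mapsto(h_{\lambda},w_{\lambda})$ is injective on $\CC\setminus\set{-1}$.

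For \ref{it:bp(23)hw}, \cref{prop:infdimhwm}\ref{it:infdimhw} identifies the irreducible \hw\ quotient of the reducible $\bprmod{j_i}{h_{\lambda}}{w_{\lambda}}$ as $\bpmod{j_i-1}{h_{\lambda}+\kappa}=\bpmod{j_i-1}{h_{\lambda}-5/9}$, with \infdim\ top space; plugging the three roots $j_i$ into $j_i-1$ produces the three families \ref{it:bp(23)i}--\ref{it:bp(23)iii}. The excluded values of $\lambda$ come from two effects. First, two of the cosets $[j_i]$ collide: $j_1-j_2=\tfrac13$, whereas $j_1-j_3=\lambda+\tfrac23$ and $j_2-j_3=\lambda+\tfrac13$, so a collision occurs exactly for $\lambda\in\ZZ+\tfrac13$ or $\lambda\in\ZZ+\tfrac23$. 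Second, for such $\lambda$ one of the candidate quotients acquires a \fdim\ top space, detected by the vanishing of the polynomial $g_n$ of \eqref{eq:deffg}: substituting $(j_i-1,h_{\lambda}-5/9)$ into $g_n$ and using $\kk=-\tfrac73$, the resulting quadratic in $n$ should have an integer root $n\ge1$ precisely for $\lambda\in\NN+\tfrac13$ (family \ref{it:bp(23)i}), $\lambda\in\NN+\tfrac23$ (family \ref{it:bp(23)ii}) and $\lambda\in(-\NN-\tfrac13)\cup(-\NN-\tfrac23)$ (family \ref{it:bp(23)iii}). Such modules are \fdim-top-space \hwms, not new \infdim\ ones, and by \cref{prop:fdimhwm} they are spectral flows of \infdim-top-space \hwms\ already present in the cleaned-up list, hence appear under \ref{it:bp(23)all}; discarding them leaves exactly the stated families, which are pairwise non-isomorphic because distinct $\lambda$ and distinct families give distinct invariants $([j],\Delta,\omega)$.

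Part \ref{it:bp(23)all} is then immediate from the general machinery: \cref{prop:irred=sf+relax} reduces any irreducible module in $\wcat_{2,3}$ to a spectral flow of an irreducible \rhwm; \cref{prop:zhuconsequences}\ref{it:trichotomy} sorts the latter into \hw, \chw\ and \fr\ types; \cref{prop:fdimhwm} shows that each spectral flow orbit of \hwms\ contains a unique \hwm\ (and, via $\bpconj$, a unique \chwm) with \infdim\ top space; and \cref{prop:infdimhwm} realises these as quotients (resp.\ submodules) of the reducible $\bprmod{j}{h_{\lambda}}{w_{\lambda}}$ enumerated above. Since conjugation exchanges \hw\ and \chwms, the list in \ref{it:bp(23)fr} and \ref{it:bp(23)hw}, together with all of its spectral flows, is complete and irredundant.

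I expect the main obstacle to be the case analysis behind the exclusion sets in \ref{it:bp(23)hw}: for the ``resonant'' values $\lambda\in\ZZ+\tfrac13$, $\lambda\in\ZZ+\tfrac23$ and $\lambda\in\ZZ$ (where one must also remember that $\wmod{h_{\lambda}}{w_{\lambda}}$ is then an atypical module, $\sing{\lambda}$ or $\sing{\lambda-1}$, rather than the Fock space $\fock{\lambda}$), one has to pin down exactly which of the three candidate \hw\ quotients genuinely keeps an \infdim\ top space and which has collapsed to a \fdim-top-space module already captured elsewhere via spectral flow. This is a routine but delicate computation driven by the explicit cubic $P_{\kk}$ and the factorisations of $g_n$, and it is the one place where the arithmetic cannot be sidestepped.
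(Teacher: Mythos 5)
Your overall strategy is the paper's: compute the cubic \eqref{eq:threesols'}, read off the three reducible cosets for part~(1), identify the irreducible \hw\ quotients via \cref{prop:infdimhwm}, and assemble part~(3) from \cref{thm:classification}. The gap lies in the mechanism you give for the exclusion sets in part~(2). Your criterion --- exclude $\lambda$ from a family whenever $g_n$ of \eqref{eq:deffg} acquires a positive integer root, i.e.\ whenever the candidate quotient has a \fdim\ top space --- does reproduce the exclusions for families \ref{it:bp(23)i} and \ref{it:bp(23)ii}: one finds $g_n = (n-\tfrac{1}{3})(n-\lambda-\tfrac{2}{3})$ and $g_n = (n+\tfrac{1}{3})(n-\lambda-\tfrac{1}{3})$ there, giving precisely $\lambda \in \NN+\tfrac{1}{3}$ and $\lambda \in \NN+\tfrac{2}{3}$. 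But for family \ref{it:bp(23)iii} one computes $g_n = (n+\lambda+\tfrac{1}{3})(n+\lambda+\tfrac{2}{3})$, which has a positive integer root only for $\lambda \in (-\NN-\tfrac{4}{3}) \cup (-\NN-\tfrac{5}{3})$. The two remaining excluded values $\lambda = -\tfrac{1}{3}$ and $\lambda = -\tfrac{2}{3}$ are \emph{not} detected by your test: the corresponding modules $\bpmod{-8/9}{-2/3}$ and $\bpmod{-2/3}{-2/3}$ have \infdim\ top spaces. They must be excluded for a different reason, namely that at these values the cubic has a \emph{double} root ($\tfrac{3\lambda+2}{9} = -\tfrac{6\lambda+1}{9} = \tfrac{1}{9}$ at $\lambda=-\tfrac{1}{3}$, and $\tfrac{3\lambda+5}{9} = -\tfrac{6\lambda+1}{9} = \tfrac{1}{3}$ at $\lambda=-\tfrac{2}{3}$), so the relevant coset contains only one \chwv\ and the single irreducible quotient is simultaneously the family~\ref{it:bp(23)iii} candidate and the family~\ref{it:bp(23)ii}, respectively \ref{it:bp(23)i}, candidate (cf.\ \cref{rem:infdimcoincidences}).

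This also undercuts your closing assertion that the listed modules are pairwise nonisomorphic ``because distinct $\lambda$ and distinct families give distinct invariants'': without the two extra exclusions the list would contain each of the above modules twice and the ``only one'' clause of \ref{it:bp(23)hw} would fail. The paper handles this by determining, for each coset containing two (possibly coincident) roots, which root is minimal --- the genuine irreducible quotient is $\bpmod{j_{\min}-1}{h_{\lambda}+\kappa}$, and this assignment deals with double roots automatically --- and then checking explicitly that no cross-family coincidences survive the exclusions. Your argument needs this supplementary case analysis at the double roots; with it supplied, parts \ref{it:bp(23)fr} and \ref{it:bp(23)all} and the completeness half of \ref{it:bp(23)hw} are sound and follow the paper's route.
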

\begin{proof}
	We again look for \chwvs\ in the top space of $\bprmod{j}{h_{\lambda}}{w_{\lambda}}$, as in the proof of \cref{thm:nondegclass}.
	This time, the existence of such a vector is equivalent to the vanishing of
	\begin{equation} \label{eq:threesols'}
		\alpha_{-7/3} w_{\lambda} + P_{-7/3}(j,h_{\lambda}) = -\brac[\big]{j-\tfrac{3\lambda+5}{9}} \brac[\big]{j-\tfrac{3\lambda+2}{9}} \brac[\big]{j+\tfrac{6\lambda+1}{9}}.
	\end{equation}
	This determines when the \fr\ $\bpminmod{2}{3}$-module $\bprmod{j}{h_{\lambda}}{w_{\lambda}}$ is irreducible, proving \ref{it:bp(23)fr}.
	Note that the roots of \eqref{eq:threesols'} are the same for $\lambda = 0$ and $-1$.

	Unlike the nondegenerate case studied in \cref{thm:nondegclass}, the three zeroes of \eqref{eq:threesols'} need not belong to different cosets in $\CC/\ZZ$.
	Indeed, we have $[\frac{3\lambda+5}{9}] = [-\frac{6\lambda+1}{9}]$ for $\lambda \in \ZZ+\frac{1}{3}$ and $[\frac{3\lambda+2}{9}] = [-\frac{6\lambda+1}{9}]$ for $\lambda \in \ZZ-\frac{1}{3}$. 
	For $\lambda \notin \ZZ\pm\frac{1}{3}$, it therefore follows that there are three irreducible \hw\ quotients, namely $\bpmod{(3\lambda-4)/9}{h_{\lambda}-5/9}$, $\bpmod{(3\lambda-7)/9}{h_{\lambda}-5/9}$ and $\bpmod{-(6\lambda+10)/9}{h_{\lambda}-5/9}$, and that each has an \infdim\ top space.

	Suppose now that $\lambda \in \ZZ+\frac{1}{3}$.
	Then, there is a single zero of \eqref{eq:threesols'} in $[\frac{3\lambda+2}{9}]$ and so $\bpmod{(3\lambda-7)/9}{h_{\lambda}-5/9}$ is the irreducible \hw\ quotient (with \infdim\ top space).
	However, there are two zeroes in $[\frac{3\lambda+5}{9}] = [-\frac{6\lambda+1}{9}]$, hence two \chwvs\ in the top space of $\bprmod{j}{h_{\lambda}}{w_{\lambda}}$.
	In other words, $\bprmod{j}{h_{\lambda}}{w_{\lambda}}$ has two \chw\ submodules, one of which contains the other.
	We want the quotient by the larger of the two, which is the one whose \chwv\ has the smallest $J_0$-eigenvalue.
	If $\lambda<0$, then this eigenvalue is $\frac{3\lambda+5}{9}$, hence $\bpmod{(3\lambda-4)/9}{h_{\lambda}-5/9}$ is the irreducible \hw\ quotient (with \infdim\ top space).
	Otherwise, it is $-\frac{6\lambda+1}{9}$ and the desired quotient is $\bpmod{-(6\lambda+10)/9}{h_{\lambda}-5/9}$.

	The analysis for $\lambda \in \ZZ-\frac{1}{3}$ is very similar.
	To complete the proof of \ref{it:bp(23)hw}, we only have to check that the members of the three \hw\ families are all distinct.
	This is easily verified.
	For example, $\brac[\big]{\frac{3\lambda-4}{9},h_{\lambda}-\frac{5}{9}} = \brac[\big]{-\frac{6\mu+10}{9},h_{\mu}-\frac{5}{9}}$ gives two solutions: $\lambda = 0$, $\mu = -1$; and $\lambda = \mu = -\frac{2}{3}$.
	In both cases, $\lambda$ corresponds to a family member but $\mu$ does not.

	Finally, \ref{it:bp(23)all} now follows from \ref{it:bp(23)fr}, \ref{it:bp(23)hw} and \cref{thm:classification}.
\end{proof}

\begin{remark} \label{rem:infdimcoincidences}
	The exclusions for the parameter $\lambda$ in the families of \cref{thm:bp(23)class}\ref{it:bp(23)hw} avoid the following coincidences:
	\begin{itemize}
		\item $\bpmod{-7/9}{-5/9}$ belongs to family \ref{it:bp(23)i} with $\lambda = -1$ and family \ref{it:bp(23)ii} with $\lambda = 0$.
		\item $\bpmod{-10/9}{-5/9}$ belongs to family \ref{it:bp(23)ii} with $\lambda = -1$ and family \ref{it:bp(23)iii} with $\lambda = 0$.
		\item $\bpmod{-4/9}{-5/9}$ belongs to family \ref{it:bp(23)iii} with $\lambda = -1$ and family \ref{it:bp(23)i} with $\lambda = 0$.
		\item $\bpmod{-8/9}{-2/3}$ belongs to family \ref{it:bp(23)iii} with $\lambda = -\frac{1}{3}$ and family \ref{it:bp(23)ii} with $\lambda = -\frac{1}{3}$.
		\item $\bpmod{-2/3}{-2/3}$ belongs to family \ref{it:bp(23)iii} with $\lambda = -\frac{2}{3}$ and family \ref{it:bp(23)i} with $\lambda = -\frac{2}{3}$.
	\end{itemize}
\end{remark}

\begin{remark} \label{rem:O23nonss}
	The proof of \cref{thm:bp(23)class} shows that there exist reducible \chw\ $\bpminmod{2}{3}$-modules.
	Conjugating therefore gives the same conclusion in the \hw\ case.
	The analogue of the BGG category $\categ{O}_{\kk}$ for $\bpminmod{2}{3}$ is consequently nonsemisimple.
\end{remark}

\begin{conjecture}
	The analogue of the BGG category $\categ{O}_{\kk}$ for $\bpminmoduv$ is semisimple if and only if $\uu=2$ and $\vv=1$, $\uu\ge3$ and $\vv=2$, or $\uu,\vv\ge3$.
\end{conjecture}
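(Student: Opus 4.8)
The plan is to prove the two implications separately, treating each coprime pair $(\uu,\vv)$ according to its family; I assume $\uu\ge2$ throughout, the case $\uu=1$ (where $\bpminmoduv$ is the already-simple $\ubpk$) being handled, as for the universal Virasoro algebra, by exhibiting a reducible \hw\ Verma module at a suitable weight. For the ``semisimple'' direction, two of the three families reduce to known results: when $\uu=2$ and $\vv=1$ we have $\bpminmod{2}{1}\cong\heis$ by \cite{AdaCon16}, and since $\kappa=\tfrac13\ne0$ the \hw\ Verma (Fock) modules are irreducible, so $\categ{O}_{\kk}$ is semisimple (equivalently, $\wcat_{2,1}$ is); when $\uu\ge3$ and $\vv=2$, $\bpminmoduv$ is rational and lisse \cite{AraAss10,AraRat10}, hence $\wcat_{\uu,2}$ is semisimple \cite{AraRat10} and so is its full subcategory $\categ{O}_{\kk}$. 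For $\uu,\vv\ge3$, the semisimplicity of $\categ{O}_{\kk}$ is the theorem of \cite{FehCla20}, proved there by relating $\categ{O}_{\kk}$ for $\bpminmoduv$ to the (semisimple \cite{AraRat12}) category $\categ{O}_{\kk}$ for $\sslk$ via minimal \qhr. An intrinsic proof by inverse \qhr\ would want to transport a hypothetical nonsplit extension of irreducible \hw\ or \chw\ $\bpminmoduv$-modules to a nonsplit extension of $\wminmoduv$-modules (after tensoring with a suitable $\lmod{j}$) and then invoke the rationality of $\wminmoduv$; but, as in \cite{FehCla20}, controlling how extensions behave under restriction along the embedding $\bpminmoduv\hookrightarrow\lvoa\otimes\wminmoduv$ of \cref{thm:iqhr}\ref{it:iqhrminmod} is delicate.

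For the converse, we must produce a reducible indecomposable object of $\categ{O}_{\kk}$ whenever $\uu\ge3,\vv=1$ or $\uu=2,\vv\ge3$. In the first case --- equivalently $\kk\in\ZZ_{\ge0}$ --- inverse \qhr\ is unavailable, but the highest-weight module structures determined in \cite{AdaCla19,AdaBer20} exhibit reducible Verma modules (equivalently, singular vectors generating proper non-maximal submodules), each of which is a reducible indecomposable object of $\categ{O}_{\kk}$; this establishes the non-semisimplicity of $\categ{O}_{\kk}$ (compare the non-semisimplicity of $\wcat_{n,1}$, $n\ge3$, noted in \cref{sec:reclass}).

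The remaining family, $\uu=2$ and $\vv\ge3$, is where inverse \qhr\ does the real work, generalising \cref{rem:O23nonss}. Fix $\kk=-3+\tfrac2\vv$ with $\vv\ge3$ odd, so that $\bpminmoduv\hookrightarrow\lvoa\otimes\wminmoduv$ exists. The mechanism is this. Suppose $(h,w)\in\irrepsuv$ is such that the cubic $j\mapsto\alpha_{\kk}w+P_{\kk}(j,h)$ of \eqref{eq:actionofG-} has two roots $j_1<j_2$ lying in a single coset of $\CC/\ZZ$. Then $\bprmod{j_1}{h}{w}=\bprmod{j_2}{h}{w}$ is a $\bpminmoduv$-module by \cref{cor:iqhrmod}\ref{it:iqhrmodminmod}, and by \eqref{eq:actionofG-} the weight vectors $v_{j_1},v_{j_2}$ at $J_0$-eigenvalues $j_1,j_2$ in its (constant-$L_0$) top space are annihilated by $G^-_0$ (since $j_1,j_2$ are roots), hence are \chwvs. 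Since $G^+_0$ acts bijectively on the one-dimensional weight spaces of this top space (\cref{prop:zhuconsequences}\ref{it:1dimwtspaces}), $(G^+_0)^{j_2-j_1}v_{j_1}$ is a nonzero multiple of $v_{j_2}$, so the \chwm\ $\langle v_{j_1}\rangle$ strictly contains the nonzero proper submodule $\langle v_{j_2}\rangle$. Almost irreducibility of $\bprmod{j_1}{h}{w}$ (\cref{thm:iqhrmod}\ref{it:almostirred}) together with the one-dimensionality of the top-space weight spaces forces any decomposition of $\langle v_{j_1}\rangle$ into a direct sum of submodules to be trivial, so $\langle v_{j_1}\rangle$ is a reducible indecomposable object of $\categ{O}_{\kk}$; applying $\bpconj$ converts it into a \hw\ one, and $\categ{O}_{\kk}$ is not semisimple. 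It remains only to produce, for each such $\vv$, one $(h,w)\in\irrepsuv$ with the stated root property. For $\vv=3$, where $\wminmod{2}{3}$ is the singlet algebra, this is \cref{thm:bp(23)class} and \cref{rem:O23nonss}: the roots $\tfrac{3\lambda+5}{9},\tfrac{3\lambda+2}{9},-\tfrac{6\lambda+1}{9}$ of \eqref{eq:threesols'} are $\ZZ$-congruent in pairs exactly when the Fock parameter $\lambda$ lies in $\ZZ\pm\tfrac13$, and on that locus $\fock{\lambda}$ is an irreducible $\wminmod{2}{3}$-module.

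The principal obstacle is the case $\vv>3$: the representation theory of $\wminmoduv$ is not explicitly known there, so verifying that the cubic above has two $\ZZ$-congruent roots for some irreducible $\wminmoduv$-module requires new input, such as a sufficiently explicit free-field realisation of $\wminmoduv$ and a description of $\irrepsuv$. A softer route avoiding the roots would be to show that $\categ{O}$ for $\wminmoduv$ is itself non-semisimple for every $\uu=2$, $\vv\ge3$ --- which should follow from $\wminmoduv$ being non-rational, hence possessing a reducible \hw\ Verma module --- and then to transport a nonsplit extension of $\wminmoduv$-modules through inverse \qhr\ to $\bpminmoduv$ by an exactness argument of the kind in \cite[Sec.~4]{KawRel18} and \cite[Sec.~4]{FehCla20}; that transport step is, once more, the delicate point, the same one that obstructs the intrinsic proof for $\uu,\vv\ge3$.
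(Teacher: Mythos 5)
The paper does not prove this statement; it is stated as a conjecture precisely because one of its cases remains open. Your proposal correctly isolates that open case but does not close it, so it is a strategy rather than a proof.

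To be specific: the three ``semisimple'' families and the non-semisimple family $\uu\ge3$, $\vv=1$ are, as you say, already in the literature ($\bpminmod{2}{1}\cong\heis$ with $\kappa\ne0$; rationality of $\bpminmod{\uu}{2}$ \cite{AraRat10}; the minimal-reduction argument of \cite{FehCla20} together with \cite{AraRat12} for $\uu,\vv\ge3$; and \cite{AdaCla19,AdaBer20} for $\kk\in\ZZ_{\ge0}$ --- though for this last family you should be explicit that the reducible indecomposable \hwms\ you invoke are modules over the simple quotient $\bpminmod{\uu}{1}$ and not merely over $\ubpk$, since a reducible Verma module need not descend). Your mechanism for $\uu=2$ --- find $(h,w)\in\irrepsuv$ for which the cubic $\alpha_{\kk}w+P_{\kk}(j,h)$ of \eqref{eq:actionofG-} has two roots differing by a positive integer, so that the almost irreducible \frm\ $\bprmodjhw$ contains a strictly nested pair of \chw\ submodules, the larger of which is a reducible indecomposable object --- is sound, and for $\vv=3$ it is exactly what \cref{rem:O23nonss} extracts from the proof of \cref{thm:bp(23)class}. (Two small citation slips: bijectivity of $G^+_0$ on the top space is \cref{thm:iqhrmod}\ref{it:G+inj}, and \cref{prop:zhuconsequences}\ref{it:1dimwtspaces} is stated for irreducible relaxed modules, though the one-dimensionality of the top-space weight spaces of $\lmod{j}\otimes\wmodhw$ is clear directly.)

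The genuine gap is $\uu=2$, $\vv\ge5$. There you have no description of $\irreps{2}{\vv}$ --- the paper itself records that making the classification explicit for $\vv>3$ is open --- so the root condition cannot be verified, and your fallback of transporting a nonsplit extension of $\wminmod{2}{\vv}$-modules through the embedding of \cref{thm:iqhr}\ref{it:iqhrminmod} runs into exactly the control-of-extensions problem you flag (restriction along a conformal embedding is not known to be exact on extension groups in this setting, and $\lmod{j}\otimes{-}$ need not send a nonsplit extension to a nonsplit one). This single missing family is the reason the statement is a conjecture rather than a theorem; your write-up accurately reproduces the state of the art but does not advance it past that point, and it should be presented as a reduction of the conjecture to the case $\uu=2$, $\vv\ge5$, not as a proof.
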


While \cref{thm:bp(23)class} classifies the irreducibles in $\wcat_{2,3}$, it may be made more explicit by determining those $(j,\Delta)$ for which $\bpmodjD$ is an irreducible \hw\ $\bpminmod{2}{3}$-module with a \fdim\ top space.
These are precisely the weight modules whose $L_0$-eigenvalues are bounded below and whose $L_0$-eigenspaces are \fdim, that is they are ordinary modules.

\begin{theorem} \label{thm:bp(23)fdtopclass}
	Every irreducible ordinary $\bpminmod{2}{3}$-module is isomorphic to one, and only one, of the following:
	\begin{enumerate}
		\item\label{it:bp(23)1} The $\bpmod{\lambda/3}{h_{\lambda}+\lambda/3}$ with $\lambda \in \CC \setminus \set{-\frac{5}{3}}$ and top space dimension $1$.
		\item\label{it:bp(23)2} The $\bpmod{(3\lambda-4)/9}{h_{\lambda}-5/9}$ with $\lambda \in \NN+\frac{4}{3}$ and top space dimension $\lambda+\frac{2}{3}$.
		\item\label{it:bp(23)3} The $\bpmod{(3\lambda-7)/9}{h_{\lambda}-5/9}$ with $\lambda \in \NN+\frac{2}{3}$ and top space dimension $\lambda+\frac{1}{3}$.
		\item\label{it:bp(23)4} The $\bpmod{-(6\lambda+10)/9}{h_{\lambda}-5/9}$ with $\lambda \in -\NN-\frac{8}{3}$ and top space dimension $-\lambda-\frac{2}{3}$.
		\item\label{it:bp(23)5} The $\bpmod{-(6\lambda+10)/9}{h_{\lambda}-5/9}$ with $\lambda \in -\NN-\frac{7}{3}$ and top space dimension $-\lambda-\frac{1}{3}$.
	\end{enumerate}
\end{theorem}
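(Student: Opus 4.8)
The plan is to reduce the classification of irreducible ordinary $\bpminmod{2}{3}$-modules to that of irreducible highest-weight modules with finite-dimensional top spaces, and then to obtain the latter from the infinite-top-space highest-weight modules of \cref{thm:bp(23)class}\ref{it:bp(23)hw} by applying spectral flow.

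\emph{Reduction.} An irreducible ordinary $\bpminmod{2}{3}$-module $\Mod{M}$ has a nonzero subspace of minimal conformal weight, every vector of which is annihilated by all modes of positive index (these strictly lower the conformal weight) and hence is a relaxed highest-weight vector; so $\Mod{M}$ is relaxed highest-weight and, by \cref{prop:zhuconsequences}\ref{it:trichotomy}, is highest-weight, conjugate highest-weight, or fully relaxed. A fully relaxed module cannot be ordinary, since its top space is an infinite-dimensional $L_0$-eigenspace. An irreducible conjugate highest-weight module with a finite-dimensional top space is also highest-weight: by \cref{prop:zhuconsequences}\ref{it:1dimwtspaces} its top space is spanned by $w,G^+_0 w,(G^+_0)^2 w,\dots$, where $w$ is its conjugate highest-weight vector, so $(G^+_0)^d w=0$ for minimal $d$ and $(G^+_0)^{d-1}w$ is then a highest-weight vector. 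Conversely, since the generating fields of $\ubp{-7/3}$ have positive integral conformal weight, the \pbw\ theorem \cite[Thm.~4.1]{KacQua03b} shows any irreducible highest-weight module with finite-dimensional top space has finite-dimensional, lower-bounded conformal weight spaces and so is ordinary. Thus the irreducible ordinary $\bpminmod{2}{3}$-modules are exactly the irreducible highest-weight $\bpminmod{2}{3}$-modules with finite-dimensional top space.

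\emph{Spectral flow.} By \cref{prop:fdimhwm}, the spectral flow orbit of such a module contains a unique highest-weight module $\Mod{H}$ with infinite-dimensional top space, and any highest-weight module in that orbit with finite-dimensional top space is $\bpsfsymb^{-1}(\Mod{H})$ or $\bpsfsymb^{-2}(\Mod{H})$; moreover \cref{thm:bp(23)class}\ref{it:bp(23)hw} lists all candidates for $\Mod{H}$, namely the three families \ref{it:bp(23)i}, \ref{it:bp(23)ii}, \ref{it:bp(23)iii}. I would therefore run through these three families and, for each member, compute $\bpsfsymb^{-1}$ and $\bpsfsymb^{-2}$, deciding whether the top space stays finite-dimensional and, if so, recording its highest weight and dimension. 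The mechanics: by the argument proving \cref{prop:fdimhwm}, an irreducible highest-weight module of weight $(j,\Delta)$ has top-space dimension $n$ precisely when $n$ is the least positive integer with $g_n(j,\Delta)=0$, where $g_n$ is as in \eqref{eq:deffg}; combining this with the identification \eqref{eq:hwmsf1} of $\bpsfsymb(\bpmodjD)$ and the weight formula \eqref{eq:bpsfweight} turns ``is $\bpsfsymb^{-\ell}(\Mod{H})$ a finite-top highest-weight module of dimension $n$?'' into a small system of polynomial equations in $\ell\in\set{1,2}$, $n$, and the family parameter $\lambda$ (after substituting $h_\lambda=\tfrac12\lambda(\lambda+1)$, $\kappa=-\tfrac59$ and $\kk+3=\tfrac23$). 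Solving these systems produces the five listed families: the locus $f(j,\Delta)=0$ of dimension-one modules, re-parametrised as $\bpmod{\lambda/3}{h_\lambda+\lambda/3}$ (these being simultaneously highest- and conjugate highest-weight, since $g_1=f$), together with the higher-dimensional families \ref{it:bp(23)2}--\ref{it:bp(23)5}; imposing that $n$ be a positive integer and that the source parameter avoid the exclusions of \cref{thm:bp(23)class}\ref{it:bp(23)hw} pins down the stated arithmetic progressions $\lambda\in\NN+\tfrac43$, $\NN+\tfrac23$, $-\NN-\tfrac83$, $-\NN-\tfrac73$.

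The real work, and the main obstacle, is bookkeeping rather than any single hard step: one must determine exactly which of the six computations $\bpsfsymb^{-1},\bpsfsymb^{-2}$ applied to \ref{it:bp(23)i}--\ref{it:bp(23)iii} yield finite-dimensional top spaces, of what dimension, and then collate the outputs into five pairwise-disjoint families. Particular care is needed with coincidences: conjugation permutes the families --- on family \ref{it:bp(23)1} it acts by $\lambda\mapsto-\tfrac53-\lambda$, which is why $\lambda=-\tfrac53$ is excluded there, the module it would label being $\bpmod{-5/9}{0}$, already the $\lambda=\tfrac23$ member of family \ref{it:bp(23)3} --- and the atypical identifications of \cref{rem:infdimcoincidences} must be checked to propagate to this finite-top setting so nothing is listed twice. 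Finally, I would confirm the ``one and only one'' claim by comparing highest weights across the five families, exactly as in the last paragraph of the proof of \cref{thm:bp(23)class}, to rule out further coincidences. Since every module in the list is a spectral flow of one of the $\bpminmod{2}{3}$-modules of \cref{thm:bp(23)class}, it is automatically an irreducible $\bpminmod{2}{3}$-module, and \cref{thm:classification} guarantees the list is complete.
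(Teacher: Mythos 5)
Your proposal is correct and follows essentially the same route as the paper: reduce to classifying the irreducible \hw\ $\bpminmod{2}{3}$-modules with \fdim\ top spaces, then use \cref{prop:fdimhwm} together with the criterion that the top-space dimension is the least $n$ with $g_n(j,\Delta)=0$ and the identification \eqref{eq:hwmsf1} to work backwards (via $\bpsfsymb^{-1}$ and $\bpsfsymb^{-2}$) from the three infinite-top families of \cref{thm:bp(23)class}\ref{it:bp(23)hw}, collating the integer solutions into the five families and checking coincidences. Your explicit justification that ordinary is equivalent to \hw\ with \fdim\ top space is a welcome addition (the paper merely asserts it), though note that the exclusion $\lambda=-\frac{5}{3}$ in family~\ref{it:bp(23)1} is forced by the coincidence with the $\lambda=\frac{2}{3}$ member of family~\ref{it:bp(23)3}, not by the conjugation symmetry $\lambda\mapsto-\frac{5}{3}-\lambda$ itself.
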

\begin{proof}
	Suppose that the irreducible \hw\ $\bpminmod{2}{3}$-module $\bpmodjD$ has a top space of dimension $n$.
	By \cref{eq:deffg}, this is equivalent to $n$ being the smallest positive integer satisfying $g_n(j,\Delta) = 0$.
	Moreover, either $\bpsfsymb(\bpmodjD)$ or $\bpsfsymb^2(\bpmodjD)$ is \hw\ with an \infdim\ top space, by \cref{prop:fdimhwm}.

	Suppose that it is $\bpsfsymb(\bpmodjD)$.
	Then, we recall that $\bpsfsymb(\bpmodjD) \cong \bpmod{j-n+4/9}{\Delta+j-n+1}$, by \eqref{eq:hwmsf1}, and compare with the classification in \cref{thm:bp(23)class}\ref{it:bp(23)hw}.
	There are thus three possibilities:
	\begin{itemize}
		\item $j-n+\frac{4}{9} = \frac{3\lambda-4}{9}$ and $\Delta+j-n+1 = h_{\lambda} - \frac{5}{9}$ for some $\lambda \notin \set{-1} \cup (\NN+\frac{1}{3})$.
		In this case, solving for $j$ and $\Delta$ results in $g_n(j,\Delta) = (n-\frac{1}{3})(n+\lambda)$.
		As $n$ must be a positive integer, this only vanishes when $\lambda = -n \in \ZZ_{\le-1}$.
		However, $\lambda=-1$ is explicitly excluded, so we only take $\lambda \in \ZZ_{\le-2}$.
		Substituting back, our solution becomes $\bpmodjD = \bpmod{-2(3\lambda+4)/9}{(\lambda-1)(3\lambda+4)/6}$.
		If we set $\lambda = \mu+\frac{1}{3}$, we recognise the family~\ref{it:bp(23)5} module $\bpmodjD = \bpmod{-(6\mu+10)/9}{h_{\mu}-5/9}$.
		Its top space has dimension $n = -\lambda = -\mu-\frac{1}{3}$, where $\mu \in -\NN-\frac{7}{3}$.
		\item $j-n+\frac{4}{9} = \frac{3\lambda-7}{9}$ and $\Delta+j-n+1 = h_{\lambda} - \frac{5}{9}$ for some $\lambda \notin \set{-1} \cup (\NN+\frac{2}{3})$.
		Following the same steps as in the previous case now gives $g_n(j,\Delta) = (n-1)(n+\lambda-\frac{1}{3})$.
		The smallest positive-integer solution for $n$ is therefore always $1$, so $\bpmodjD = \bpmod{(3\lambda-2)/9}{(\lambda+1)(3\lambda-2)/6}$ has a $1$-dimensional top space.
		Setting $\mu = \lambda - \frac{2}{3}$, we recognise these modules as belonging to family~\ref{it:bp(23)1} with $\mu \notin \set{-\frac{5}{3}} \cup \NN$.
		\item $j-n+\frac{4}{9} = -\frac{6\lambda+10}{9}$ and $\Delta+j-n+1 = h_{\lambda} - \frac{5}{9}$ for some $\lambda \notin \set{-1} \cup (-\NN-\frac{1}{3}) \cup (-\NN-\frac{2}{3})$.
		This time, we get $g_n(j,\Delta) = (n-\lambda-1) (n-\lambda-\frac{4}{3})$, hence two distinct families of solutions: $\lambda = n-1 \in \NN$ and $\lambda = n-\frac{4}{3} \in \NN+\frac{2}{3}$ (we have to exclude $\lambda = -\frac{1}{3}$).
		For $\lambda \in \NN$, we set $\mu = \lambda+\frac{2}{3}$ to recognise the $\bpmodjD$ as belonging to family~\ref{it:bp(23)3} with $\mu \in \NN+\frac{2}{3}$.
		For $\lambda \in \NN+\frac{2}{3}$, $\mu = \lambda+\frac{2}{3}$ instead results in the $\bpmodjD$ belonging to family~\ref{it:bp(23)2} with $\mu \in \NN+\frac{4}{3}$.
	\end{itemize}

	The only alternative is that $\bpsfsymb^2(\bpmodjD)$ is \hw\ with an \infdim\ top space.
	Then, $\bpsfsymb(\bpmodjD)$ must belong to one of the four families of \hw\ $\bpminmod{2}{3}$-modules with \fdim\ top spaces that we have already discovered.
	The analysis for families~\ref{it:bp(23)2}, \ref{it:bp(23)3} and \ref{it:bp(23)5} is then the same as above, except that $\lambda$ is now required to lie in $\NN+\frac{4}{3}$, $\NN+\frac{2}{3}$ and $-\NN-\frac{7}{3}$, respectively.
	The results are that this is impossible when $\bpsfsymb(\bpmodjD)$ belongs to families~\ref{it:bp(23)2} and \ref{it:bp(23)5}, but for family~\ref{it:bp(23)3} the $\bpmodjD$ are found to belong to family~\ref{it:bp(23)1} with $\lambda \in \NN$.

	It only remains to consider if $\bpsfsymb(\bpmodjD)$ can belong to family~\ref{it:bp(23)1} (with $\lambda \notin \set{-\frac{5}{3}} \cup \NN$).
	Setting $j-n+\frac{4}{9} = \frac{\lambda}{3}$ and $\Delta+j-n+1 = h_{\lambda} + \frac{\lambda}{3}$, we deduce that $g_n(j,\Delta) = (n+\frac{1}{3})(n+\lambda+\frac{2}{3})$.  Noting that $\lambda = -n-\frac{2}{3} \in -\NN-\frac{8}{3}$, because $-\frac{5}{3}$ must be excluded, we conclude that $\bpmodjD$ belongs to family~\ref{it:bp(23)4}.
\end{proof}

\begin{corollary} \label{cor:bp23hw}
	Every irreducible \hw\ $\bpminmod{2}{3}$-module is isomorphic to one in the set
	\begin{equation} \label{eq:bp23hwset}
		\set[\big]{\bpmod{(3\lambda-4)/9}{h_{\lambda}-5/9}, \bpmod{(3\lambda-7)/9}{h_{\lambda}-5/9},
		           \bpmod{-(6\lambda+10)/9}{h_{\lambda}-5/9}, \bpmod{\lambda/3}{h_{\lambda}+\lambda/3}
		           \st \lambda \in \CC}.
	\end{equation}
\end{corollary}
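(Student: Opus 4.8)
The plan is to read off the \lcnamecref{cor:bp23hw} from the two preceding classifications, \cref{thm:bp(23)class,thm:bp(23)fdtopclass}. An irreducible \hw\ $\bpminmod{2}{3}$-module $\Mod{M}$ has a top space, namely its subspace of minimal conformal weight, which is either \infdim\ or \fdim, and I would treat these two cases separately.

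If the top space is \infdim, then \cref{thm:bp(23)class}\ref{it:bp(23)hw} applies directly: $\Mod{M}$ is isomorphic to a member of one of the families \ref{it:bp(23)i}, \ref{it:bp(23)ii}, \ref{it:bp(23)iii}, which are obtained from the first, second and third of the module families appearing in \eqref{eq:bp23hwset}, respectively, by restricting the parameter $\lambda$ to a subset of $\CC$. Hence $\Mod{M}$ is visibly a member of the set \eqref{eq:bp23hwset}.

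If the top space is \fdim, I would first observe that $\Mod{M}$ is then an ordinary module. Indeed, a \hw\ $\ubpk$-module is spanned by the negative-index modes of its generating fields acting on its top space; since each such mode strictly raises the $L_0$-grade by a positive integer, the $L_0$-spectrum of $\Mod{M}$ is bounded below by the conformal weight of the top space, and each $L_0$-eigenspace is spanned by the images of a \fdim\ space under the finitely many \pbw\ monomials of the appropriate degree, hence is \fdim. Thus \cref{thm:bp(23)fdtopclass} applies and shows that $\Mod{M}$ is isomorphic to a member of one of the families \ref{it:bp(23)1}--\ref{it:bp(23)5}; each of these is again contained, after restricting $\lambda$, in one of the four module families appearing in \eqref{eq:bp23hwset} (namely \ref{it:bp(23)1} in the fourth, \ref{it:bp(23)2} in the first, \ref{it:bp(23)3} in the second, and \ref{it:bp(23)4}--\ref{it:bp(23)5} in the third). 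This completes the argument.

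I do not expect any genuine obstacle here: the \lcnamecref{cor:bp23hw} is a bookkeeping consequence of the two theorems. The closest thing to a real step is the reduction, used in the \fdim\ case, that a \hw\ module with \fdim\ top space is automatically ordinary; together with the eight routine inclusions of parameter families into the four families listed in \eqref{eq:bp23hwset}, that is all one has to check. I would also note that \eqref{eq:bp23hwset} is stated merely as a covering set: determining the exact coincidences among its four families, and between the ordinary and non-ordinary cases, would take further work, but is not needed for the assertion as stated.
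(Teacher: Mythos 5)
Your proposal is correct and is exactly the argument the paper intends: the corollary is stated as an immediate consequence of \cref{thm:bp(23)class}\ref{it:bp(23)hw} (the \infdim\ top-space case) and \cref{thm:bp(23)fdtopclass} (the \fdim\ top-space case, which the paper likewise identifies with the ordinary modules), with the eight parameter-family inclusions into \eqref{eq:bp23hwset} being routine. Your brief justification that a \hw\ module with \fdim\ top space is ordinary is the same observation the paper makes just before \cref{thm:bp(23)fdtopclass}.
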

\noindent
We may equivalently reparametrise the four families in \eqref{eq:bp23hwset} using the $J_0$-eigenvalue of the \hwv:
\begin{equation} \label{eq:bp23hwset'}
	\set[\big]{\bpmod{j}{(j+1)(9j+2)/2}, \bpmod{j}{(3j+4)(9j+5)/6}, \bpmod{j}{j(9j+14)/8}, \bpmod{j}{j(9j+5)/2} \st j \in \CC}.
\end{equation}
For convenience, (a part of) this set is plotted (for real $j$) in \cref{fig:bp23-plot}.

\begin{figure}
	\includegraphics[width=0.6\textwidth]{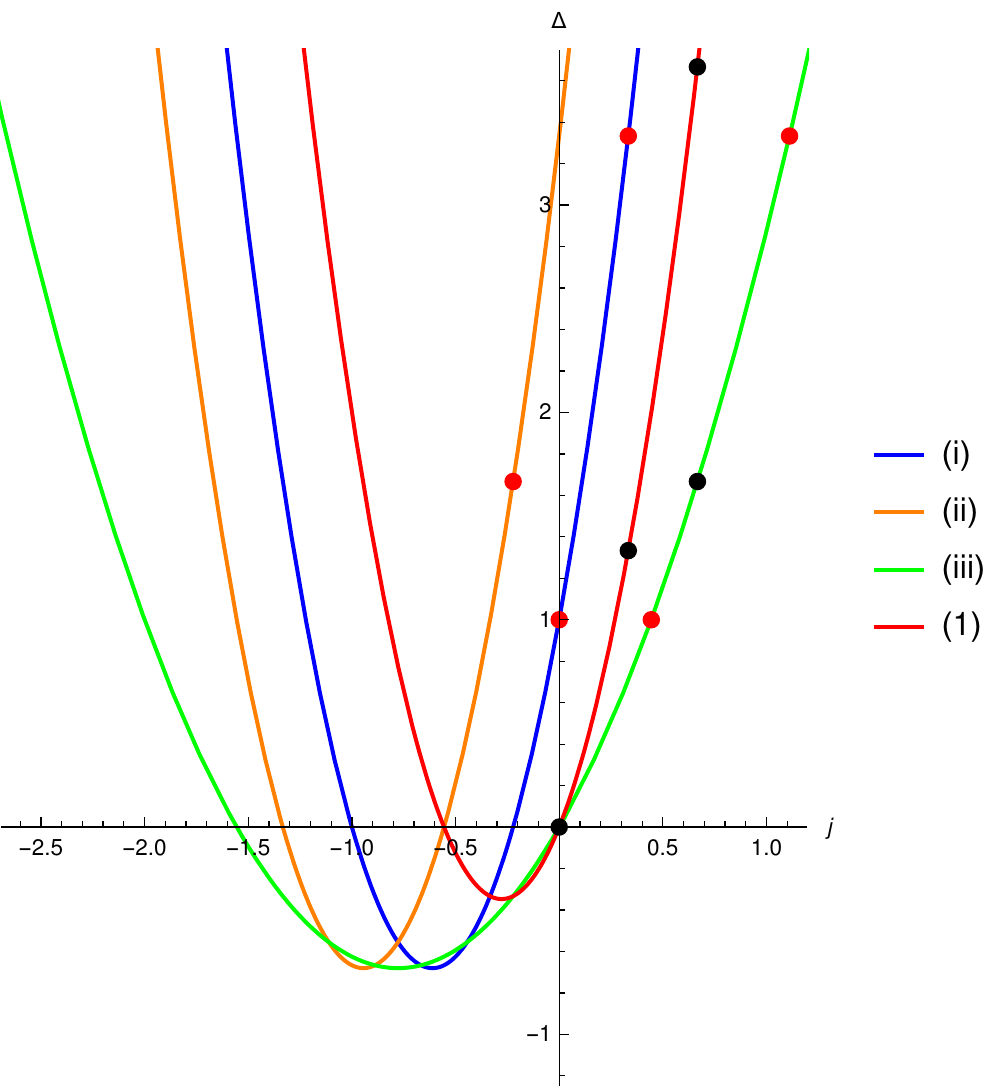}
\caption{%
	A picture of the highest weights $(j,\Delta)$ appearing in the set \eqref{eq:bp23hwset'}, with $j$ real.
	The blue, orange and green curves indicate the families~\ref{it:bp(23)i}, \ref{it:bp(23)ii} and \ref{it:bp(23)iii} of \hw\ $\bpminmod{2}{3}$-modules described in \cref{thm:bp(23)class}.
	These modules generically have \infdim\ top spaces.
	The red curve indicates family~\ref{it:bp(23)1} in \cref{thm:bp(23)fdtopclass}.
	Its modules have \fdim\ top spaces and their images under $\bpsfsymb$ have \infdim\ top spaces.
	A red dot indicates another module with these properties (families~\ref{it:bp(23)2}, \ref{it:bp(23)3} and \ref{it:bp(23)5} in \cref{thm:bp(23)fdtopclass}).
	On the other hand, a black dot indicates a module with a \fdim\ top space whose image under $\bpsfsymb$ also has a \fdim\ top space (family~\ref{it:bp(23)1} with $j \in \frac{1}{3} \NN$ and family~\ref{it:bp(23)4}).
} \label{fig:bp23-plot}
\end{figure}

\begin{remark} \label{rem:fdimcoincidences}
	The exclusions for the parameter $\lambda$ in the families of \cref{thm:bp(23)fdtopclass} avoid the following coincidences:
	\begin{itemize}
		\item $\bpmod{-1/3}{-1/3}$ belongs to family~\ref{it:bp(23)1} with $\lambda = -1$ and family~\ref{it:bp(23)2} with $\lambda = \frac{1}{3}$.
		\item $\bpmod{-5/9}{0}$ belongs to family~\ref{it:bp(23)1} with $\lambda = -\frac{5}{3}$ and family~\ref{it:bp(23)3} with $\lambda = \frac{2}{3}$.
		\item $\bpmod{0}{0}$ belongs to family~\ref{it:bp(23)1} with $\lambda = 0$ and family~\ref{it:bp(23)4} with $\lambda = -\frac{5}{3}$.
		\item $\bpmod{-2/9}{-1/3}$ belongs to family~\ref{it:bp(23)1} with $\lambda = -\frac{2}{3}$ and family~\ref{it:bp(23)5} with $\lambda = -\frac{4}{3}$.
	\end{itemize}
\end{remark}

\begin{remark} \label{rem:bp23sforbits}
	We record for completeness the result of applying spectral flow to a \hw\ $\bpminmod{2}{3}$-module with \fdim\ top space:
	\begin{itemize}
		\item For $\lambda \in \CC$, $\bpsfsymb(\bpmod{\lambda/3}{h_{\lambda}+\lambda/3}) \cong \bpmod{(3\mu-7)/9}{h_{\mu}-5/9}$, where $\mu = \lambda + \frac{2}{3}$.
		\item For $\lambda \in \NN+\frac{1}{3}$, $\bpsfsymb(\bpmod{(3\lambda-4)/9}{h_{\lambda}-5/9}) \cong \bpmod{-(6\mu+10)/9}{h_{\mu}-5/9}$, where $\mu = \lambda - \frac{2}{3}$.
		\item For $\lambda \in \NN+\frac{2}{3}$, $\bpsfsymb(\bpmod{(3\lambda-7)/9}{h_{\lambda}-5/9}) \cong \bpmod{-(6\mu+10)/9}{h_{\mu}-5/9}$, where $\mu = \lambda - \frac{2}{3}$.
		\item For $\lambda \in -\NN-\frac{5}{3}$, $\bpsfsymb(\bpmod{-(6\lambda+10)/9}{h_{\lambda}-5/9}) \cong \bpmod{\mu/3}{h_{\mu}+\mu/3}$, where $\mu = \lambda$.
		\item For $\lambda \in -\NN-\frac{4}{3}$, $\bpsfsymb(\bpmod{-(6\lambda+10)/9}{h_{\lambda}-5/9}) \cong \bpmod{(3\mu-4)/9}{h_{\mu}-5/9}$, where $\mu = \lambda + \frac{1}{3}$.
	\end{itemize}
	We can thus roughly summarise the corresponding spectral flow orbits in terms of the families of \cref{thm:bp(23)class,thm:bp(23)fdtopclass} as follows (ignoring modules that are not \hw):
	\begin{equation}
		\ref{it:bp(23)1} \xrightarrow{\bpsfsymb} \ref{it:bp(23)3} \xrightarrow{\bpsfsymb} \ref{it:bp(23)iii}, \qquad
		\ref{it:bp(23)2} \xrightarrow{\bpsfsymb} \ref{it:bp(23)iii}, \qquad
		\ref{it:bp(23)4} \xrightarrow{\bpsfsymb} \ref{it:bp(23)1} \xrightarrow{\bpsfsymb} \ref{it:bp(23)ii}, \qquad
		\ref{it:bp(23)5} \xrightarrow{\bpsfsymb} \ref{it:bp(23)i}.
	\end{equation}
	Of course, there are also an uncountably infinite number of orbits with a single \hwm.
\end{remark}

\section{An application to $\slthree$ minimal models} \label{sec:sl3}

We finish by studying some of the implications of our results, when combined with other known relationships, to $\slthree$ minimal models.
We denote the universal level-$\kk$ affine \voa\ associated with $\slthree$ by $\uslk$ and its simple quotient by $\sslk$.
When $\kk$ is expressed in terms of $\uu$ and $\vv$, as in \eqref{eq:kuv}, we shall also write $\sslk = \slminmoduv$ and refer to the latter as an $\slthree$ minimal model \voa.

Recall that $\ubpk$ is the \qhr\ of $\uslk$ corresponding to the minimal (and subregular) nilpotent orbit \cite{PolGau90,BerCon91,KacQua03}.
We restrict the corresponding reduction functor $\qhrmin$ to the \kl\ category $\klcat^{\kk}$ of ordinary $\uslk$-modules, these being the weight modules with bounded-below $L_0$-eigenvalues and \fdim\ $L_0$-eigenspaces.
The simple objects of $\klcat^{\kk}$ are thus the irreducible \hwms\ whose highest weights have the form $(\kk-r-s+2)\fwt{0} + (r-1) \fwt{1} + (s-1) \fwt{2}$, for some $r,s \in \ZZ_{\ge1}$.
Here, $\fwt{i}$, $i=0,1,2$, denotes the $i$-th fundamental weight of $\aslthree$.
We denote the irreducible \hw\ $\uslk$-module of this highest weight by $\slmod{r}{s}$.

\begin{proposition} \label{prop:red1}
	For $\kk \notin \set{-3} \cup \ZZ_{\ge-1}$ and $r,s \in \ZZ_{\ge1}$, the minimal \qhr\ of $\slmod{r}{s}$ is the irreducible \hw\ $\ubpk$-module $\qhrmin(\slmod{r}{s}) = \bpmod{j_{r,s}}{\Delta_{r,s}}$, where
	\begin{equation} \label{eq:jDrs}
		j_{r,s} = \frac{r+2s-3}{3} \quad \text{and} \quad
		\Delta_{r,s} = \frac{r^2+rs+s^2-3}{3(\kk+3)} - \frac{2r+s-3}{3}.
	\end{equation}
	Moreover, the top space of $\bpmod{j_{r,s}}{\Delta_{r,s}}$ has dimension $s$.
\end{proposition}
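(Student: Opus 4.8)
The plan is to combine the standard properties of the minimal \qhr\ functor with the polynomial $g_n$ of \eqref{eq:deffg} that already controls the \fdim ity of top spaces of \hw\ $\ubpk$-modules. First I would check that $\qhrmin(\slmod{r}{s})$ is a nonzero irreducible \hw\ $\ubpk$-module. Since $\ubpk$ is the minimal (and, for $\slthree$, subregular) \qhr\ of $\uslk$, the functor $\qhrmin$ is exact on the \kl\ category $\klcat^{\kk}$ and sends each irreducible \hw\ $\uslk$-module to an irreducible \hw\ $\ubpk$-module or to zero \cite{AraRep04,AraRat12}. Here the vanishing does not occur: for $r,s\ge1$ and $\kk\notin\set{-3}\cup\ZZ_{\ge-1}$ the Euler--Poincar\'e character of the reduction of $\slmod{r}{s}$ is nonzero, so $\qhrmin(\slmod{r}{s})\cong\bpmod{j}{\Delta}$ for some $j,\Delta\in\CC$ (we fix the convention for $\qhrmin$ so that it produces a \hwm, not a \chwm).

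Second, I would identify $(j,\Delta)$ by computing the eigenvalues of $J_0$ and $L_0$ on the image of the \slthree\ \hwv\ of $\slmod{r}{s}$, which is the \hwv\ of $\bpmod{j}{\Delta}$. Restricting the expressions for $J$ and $L$ in terms of the \slthree\ currents and the ghost fields of the BRST complex to their action on this vector, only the \slthree\ Cartan currents and the ghost vacuum contribute. The $L_0$-eigenvalue then equals the Sugawara conformal weight of the $\uslk$-module $\slmod{r}{s}$, namely $\tfrac{1}{3(\kk+3)}(r^2+rs+s^2-3)$, shifted by the contribution of the Dynkin grading element and of the (integer-grading) conformal vector chosen for $\ubpk$; this shift works out to $-\tfrac{1}{3}(2r+s-3)$, so $\Delta=\Delta_{r,s}$. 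Likewise the $J_0$-eigenvalue of this vector works out to $j=j_{r,s}$. These are routine (if fiddly) computations with the explicit reduction; cf.\ \cite{FehCla20,KacQua03}.

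Third, and this is the genuinely new part, I would compute the top-space dimension. By the analysis in the proof of \cref{prop:fdimhwm} (compare the proof of \cref{thm:bp(23)fdtopclass}), the top space of an irreducible \hw\ $\ubpk$-module $\bpmod{j}{\Delta}$ has dimension equal to the least positive integer $n$ with $g_n(j,\Delta)=0$, with $g_n$ as in \eqref{eq:deffg}. Substituting $j=j_{r,s}$ and $\Delta=\Delta_{r,s}$, a short computation gives the factorisation
\begin{equation*}
	g_n(j_{r,s},\Delta_{r,s}) = (n-s)(n+\kk+3-r-s),
\end{equation*}
whose only integer zeros are $n=s$ and $n=r+s-\kk-3$. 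Since $s\ge1$ is always a zero, it remains to rule out $r+s-\kk-3$ being a positive integer strictly smaller than $s$. But $r+s-\kk-3\in\ZZ$ forces $\kk\in\ZZ$, and then $1\le r+s-\kk-3<s$ forces $r-3<\kk\le r+s-4$, hence $\kk\in\set{r-2,\dots,r+s-4}\subseteq\ZZ_{\ge-1}$ (using $r\ge1$), contradicting $\kk\notin\ZZ_{\ge-1}$. Thus $n=s$ is the least positive zero of $g_n(j_{r,s},\Delta_{r,s})$, so the top space of $\bpmod{j_{r,s}}{\Delta_{r,s}}$ has dimension $s$; note that it is precisely here that $\kk\notin\ZZ_{\ge-1}$ is needed.

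I expect the second step to be the main obstacle, not because it is deep but because it requires careful bookkeeping of the ghost contributions in the DS complex and of the nonstandard conformal vector chosen for $\ubpk$, together with a check that $\qhrmin(\slmod{r}{s})$ really is nonzero at the nonadmissible levels in scope (such as $\kk=-\tfrac{7}{3}$). By contrast, the factorisation of $g_n$ and the elementary level-arithmetic of the third step are entirely mechanical.
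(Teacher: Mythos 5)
Your proposal is correct and follows essentially the same route as the paper: nonvanishing and irreducibility of the reduction via Arakawa's theorem \cite{AraRep04}, identification of the highest weight via the explicit minimal reduction (the paper simply cites \cite[Thm.~6.3]{KacQua03b} rather than redoing the BRST bookkeeping), and the top-space dimension from the factorisation $g_n(j_{r,s},\Delta_{r,s}) = (n-s)(n-r-s+\kk+3)$ together with $\kk \notin \ZZ_{\ge-1}$. Your elimination of the second root as a smaller positive integer is spelled out in more detail than in the paper, but the content is identical.
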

\begin{proof}
	Since $\kk \notin \NN$, the zeroth Dynkin label of the highest weight of $\slmod{r}{s}$ is not in $\NN$.
	The minimal reduction of $\slmod{r}{s}$ is thus an irreducible \hwm, by \cite[Thm.~6.7.4]{AraRep04}.
	Moreover, its highest weight corresponds to the quoted formulae for $j_{r,s}$ and $\Delta_{r,s}$, by \cite[Thm.~6.3]{KacQua03b}.
	It remains to check that its top space has dimension $s$.
	This follows from \eqref{eq:deffg} and $\kk \notin \ZZ_{\ge-1}$, because
	\begin{equation}
		g_n(j_{r,s},\Delta_{r,s}) = (n-r-s+\kk+3)(n-s). \qedhere
	\end{equation}
\end{proof}

\begin{proposition} \label{prop:red2}
	For $\kk \notin \set{-3} \cup \ZZ_{\ge-1}$ and $r,s \in \ZZ_{\ge1}$, the (irreducible) $\ubpk$-module $\bpmod{j_{r,s}}{\Delta_{r,s}}$ may be realised as a submodule of $\bpsfsymb(\bprmod{j'_{r,s}}{h_{r,s}}{w_{r,s}})$, where
	\begin{equation} \label{eq:j'hwrs}
		\begin{gathered}
			j'_{r,s} = \frac{r+2s-2(\kk+3)}{3}, \quad
			h_{r,s} = \frac{r^2+rs+s^2-3}{3(\kk+3)}-r-s+2 \\ \text{and} \quad
			w_{r,s} = -\frac{\sqrt{3}}{(\kk+3)^3/2} \frac{r-s}{3} \brac*{\frac{2r+s}{3}-\kk-3} \brac*{\frac{r+2s}{3}-\kk-3}.
		\end{gathered}
	\end{equation}
\end{proposition}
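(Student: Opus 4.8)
The plan is to realise $\bpmod{j_{r,s}}{\Delta_{r,s}}$ as the spectral flow of an irreducible \chwm\ that embeds into $\bprmod{j'_{r,s}}{h_{r,s}}{w_{r,s}}$ via \cref{prop:infdimhwm}\ref{it:infdimchw}. The first step is to identify $\bpsfsymb^{-1}(\bpmod{j_{r,s}}{\Delta_{r,s}})$. Writing $v$ for the \hwv\ of $\bpmod{j_{r,s}}{\Delta_{r,s}}$, the relations \eqref{eq:bpsfJL} show immediately that $\bpsfsymb^{-1}(v)$ is annihilated by every positive mode and by $G^-_0$, hence is a \chwv; since $v$ generates $\bpmod{j_{r,s}}{\Delta_{r,s}}$ and spectral flow preserves irreducibility, $\bpsfsymb^{-1}(\bpmod{j_{r,s}}{\Delta_{r,s}})$ is the irreducible \chwm\ whose \chwv\ has weight $\bigl(j_{r,s}-\kappa,\,\Delta_{r,s}-j_{r,s}+\kappa\bigr)$, as read off from \eqref{eq:bpsfweight}. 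Comparing \eqref{eq:jDrs} with \eqref{eq:j'hwrs} gives $j_{r,s}-\kappa=j'_{r,s}$ and $\Delta_{r,s}-j_{r,s}=h_{r,s}$, so $\bpsfsymb^{-1}(\bpmod{j_{r,s}}{\Delta_{r,s}})\cong\bpcmod{j'_{r,s}}{h_{r,s}+\kappa}$.

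The second step is to show that the top space of $\bpcmod{j'_{r,s}}{h_{r,s}+\kappa}$ is \infdim. By \cref{prop:red1} the top space of $\bpmod{j_{r,s}}{\Delta_{r,s}}$ has dimension $s$; applying \eqref{eq:hwmsf1} and the $g_m$-identity from the proof of \cref{prop:fdimhwm} gives $\bpsfsymb(\bpmod{j_{r,s}}{\Delta_{r,s}})\cong\bpmod{j_{r,s}-s+1+\kappa}{\Delta_{r,s}+j_{r,s}-s+1}$, whose relevant $g_m$-value is $(r-m)(\kk+3-s-m)$; under the hypothesis on $\kk$ the only positive integer zero of this is $m=r$, so this module has top space of dimension $r$. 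Thus the spectral flow orbit of $\bpmod{j_{r,s}}{\Delta_{r,s}}$ already contains two \hwms\ with \fdim\ top spaces (namely $\bpmod{j_{r,s}}{\Delta_{r,s}}$ and $\bpsfsymb(\bpmod{j_{r,s}}{\Delta_{r,s}})$, with top dimensions $s$ and $r$), and since an \hwm\ with \fdim\ top space is also a \chwm\ with \fdim\ top space, \cref{prop:fdimhwm} leaves no room for $\bpsfsymb^{-1}(\bpmod{j_{r,s}}{\Delta_{r,s}})$ to have \fdim\ top space; being a \chwm, it must therefore be the unique one in the orbit with \infdim\ top space.

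The third step assembles these. By \cref{prop:infdimhwm}\ref{it:infdimchw}, $\bpcmod{j'_{r,s}}{h_{r,s}+\kappa}$ embeds into $\bprmod{j'_{r,s}}{h}{w}$ for the unique $(h,w)$; here $h=(h_{r,s}+\kappa)-\kappa=h_{r,s}$, while $w$ is fixed by requiring that $G^-_0$ annihilate the top-space vector $\ee^{-b+(j'_{r,s}+\kappa)c}\otimes v_{h_{r,s},w}$, which by \eqref{eq:actionofG-} amounts to $\alpha_{\kk}w+P_{\kk}(j'_{r,s},h_{r,s})=0$. A direct computation, entirely parallel to those producing \eqref{eq:threesols} and \eqref{eq:threesols'}, shows that $w=w_{r,s}$ is the solution. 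Hence $\bpcmod{j'_{r,s}}{h_{r,s}+\kappa}\hookrightarrow\bprmod{j'_{r,s}}{h_{r,s}}{w_{r,s}}$, and applying the exact functor $\bpsfsymb$ gives $\bpmod{j_{r,s}}{\Delta_{r,s}}=\bpsfsymb\bigl(\bpcmod{j'_{r,s}}{h_{r,s}+\kappa}\bigr)\hookrightarrow\bpsfsymb\bigl(\bprmod{j'_{r,s}}{h_{r,s}}{w_{r,s}}\bigr)$, as claimed.

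The hard part is the second step. The point is that $\bprmod{j'_{r,s}}{h_{r,s}}{w_{r,s}}$ has a bijective $G^+_0$-action (\cref{thm:iqhrmod}), so the \chw\ submodule generated by the top-space \chwv\ always has an \infdim\ top space; to conclude that $\bpcmod{j'_{r,s}}{h_{r,s}+\kappa}$ is genuinely a submodule (and not merely a proper quotient of this \chw\ submodule), one must know that $\bpcmod{j'_{r,s}}{h_{r,s}+\kappa}$ itself has an \infdim\ top space, which is precisely what the second step secures. The polynomial identity required in the third step is routine but tedious.
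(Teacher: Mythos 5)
Your argument is correct and follows the same route as the paper's proof: identify $\bpsfsymb^{-1}(\bpmod{j_{r,s}}{\Delta_{r,s}})$ as an irreducible \chwm, match its highest weight against \cref{prop:infdimhwm}\ref{it:infdimchw}, and apply $\bpsfsymb$. The one place where you genuinely sharpen the argument is your second step. The paper merely observes that $\bpmod{j_{r,s}}{\Delta_{r,s}}$ is isomorphic to $\bpsfsymb(\Mod{C})$ or $\bpsfsymb^2(\Mod{C})$ for the unique \chwm\ $\Mod{C}$ with \infdim\ top space in the orbit, works out the first case, and dismisses the second with the remark that a solution has already been found; it does not explicitly verify that $\bpsfsymb^{-1}(\bpmod{j_{r,s}}{\Delta_{r,s}})$ has an \infdim\ top space, which is precisely the hypothesis needed to invoke \cref{prop:infdimhwm}\ref{it:infdimchw}. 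Your computation that $\bpsfsymb(\bpmod{j_{r,s}}{\Delta_{r,s}})$ has top space of dimension $r$ (via $g_m = (r-m)(\kk+3-s-m)$, whose only positive-integer root is $m=r$ under the stated restriction on $\kk$), so that the orbit already contains the full quota of two \hwms\ with \fdim\ top spaces permitted by \cref{prop:fdimhwm}\ref{it:sforbhwfin}, settles this point cleanly; one should perhaps add that $\bpmod{j_{r,s}}{\Delta_{r,s}} \not\cong \bpsfsymb(\bpmod{j_{r,s}}{\Delta_{r,s}})$, since otherwise the orbit would contain no \hwm\ with \infdim\ top space, contradicting \cref{prop:fdimhwm}\ref{it:sforbhwinf}, but this is immediate. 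The remaining identifications, $j'_{r,s} = j_{r,s}-\kappa$, $h_{r,s} = \Delta_{r,s}-j_{r,s}$ and the determination of $w_{r,s}$ from \eqref{eq:actionofG-}, agree with the paper.
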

\begin{proof}
	As $\bpmod{j_{r,s}}{\Delta_{r,s}}$ is \hw, with a \fdim\ top space, it is isomorphic to either $\bpsfsymb(\Mod{C})$ or $\bpsfsymb^2(\Mod{C})$, where $\Mod{C}$ is a \chwm\ with an \infdim\ top space (\cref{prop:fdimhwm}).
	Suppose that it is $\bpsfsymb(\Mod{C})$.
	Then, $\bpmod{j_{r,s}}{\Delta_{r,s}}$ is isomorphic to a submodule of $\bpsfsymb(\bprmod{j'_{r,s}}{h_{r,s}}{w_{r,s}})$, for some $[j'_{r,s}] \in \CC/\ZZ$ and $h_{r,s},w_{r,s} \in \CC$, by \cref{prop:infdimhwm}\ref{it:infdimchw}.
	The \hwv\ of $\bpmod{j_{r,s}}{\Delta_{r,s}}$ is mapped to the \chwv\ of $\Mod{C}$ by $\bpsfsymb^{-1}$ and the weight of the latter is $(j_{r,s}-\kappa, \Delta_{r,s}-j_{r,s}+\kappa)$, by \eqref{eq:bpsfweight}.
	As in the proof of \cref{prop:infdimhwm}, this identifies $[j'_{r,s}] = [j_{r,s}-\kappa]$ and $h_{r,s} = \Delta_{r,s}-j_{r,s}$.
	To obtain $w_{r,s}$, substitute $j'_{r,s}$ and $h_{r,s}$ into \eqref{eq:actionofG-}.
	As we have found a solution, there is no need to consider the possibility that $\bpmod{j_{r,s}}{\Delta_{r,s}} \cong \bpsfsymb^2(\Mod{C})$.
\end{proof}

\begin{remark} \label{rem:ordinaryrealisation}
	\cref{prop:red2} constructs an embedding $\bpmod{j_{r,s}}{\Delta_{r,s}} \ira \bpsfsymb(\bprmod{j'_{r,s}}{h_{r,s}}{w_{r,s}}) = \lsfsymb(\lmod{j'_{r,s}}) \otimes \wmod{h_{r,s}}{w_{r,s}}$.
	However, $\lmod{j'_{r,s}} \cong \lvoa \, \ee^{-b+(j'_{r,s}+\kappa)c} = \lvoa \, \ee^{-b+(r+2s-3)c/3}$ and thus
	\begin{equation}
		\lsfsymb(\lmod{j'_{r,s}}) = \lvoa \, \ee^{(r+2s-3)c/3} \in \lvoa^{1/3}, \quad
		\lvoa^{1/3} = \lvoa \oplus \lvoa \, \ee^{c/3} \oplus \lvoa \, \ee^{2c/3},
	\end{equation}
	by \eqref{eq:lattsfvec}.
	It follows that this \cref{prop:red2} constructs the ordinary $\ubpk$-modules $\bpmod{j_{r,s}}{\Delta_{r,s}}$ as submodules of $\lvoa^{1/3} \otimes \wmod{h_{r,s}}{w_{r,s}}$.
	This is thus the analogue of the realisation of ordinary $\uaff{\kk}{\sltwo}$-modules presented in \cite[Sec.~6]{AdaRea17}.
\end{remark}

We have the following important consequence.
\begin{theorem} \label{thm:sl3bpw3}
	Assume that $\uu\ge2$ and $\vv\ge3$ are coprime and that $r, s \in \ZZ_{\ge1}$.
	Let $j_{r,s}$, $\Delta_{r,s}$, $h_{r,s}$ and $w_{r,s}$ be defined by \eqref{eq:jDrs} and \eqref{eq:j'hwrs}.
	Then, the following conditions are equivalent:
	\begin{enumerate}
		\item\label{it:equivsl3} $\slmod{r}{s}$ is an $\slminmoduv$-module.
		\item\label{it:equivbp} $\bpmod{j_{r,s}}{\Delta_{r,s}}$ is a $\bpminmoduv$-module.
		\item\label{it:equivw3} $\wmod{h_{r,s}}{w_{r,s}}$ is a $\wminmoduv$-module.
	\end{enumerate}
\end{theorem}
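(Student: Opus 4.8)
The plan is to prove the implications (1)$\Rightarrow$(2), (2)$\Rightarrow$(1), and the equivalence (2)$\Leftrightarrow$(3), which together give all three equivalences. The step (2)$\Leftrightarrow$(3) should come essentially for free from the inverse reduction machinery already developed, while (1)$\Leftrightarrow$(2) will be handled with the minimal reduction functor $\qhrmin$; the latter is where the real content sits.

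For (3)$\Rightarrow$(2): if $\wmod{h_{r,s}}{w_{r,s}}$ is a $\wminmoduv$-module then $(h_{r,s},w_{r,s})\in\irrepsuv$, so $\bprmod{j'_{r,s}}{h_{r,s}}{w_{r,s}} = \lmod{j'_{r,s}}\otimes\wmod{h_{r,s}}{w_{r,s}}$ is a $\bpminmoduv$-module by \cref{cor:iqhrmod}\ref{it:iqhrmodminmod}, hence so is its spectral flow $\bpsfsymb\brac[\big]{\bprmod{j'_{r,s}}{h_{r,s}}{w_{r,s}}}$, and therefore so is its submodule $\bpmod{j_{r,s}}{\Delta_{r,s}}$ by \cref{prop:red2}. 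For (2)$\Rightarrow$(3): if $\bpmod{j_{r,s}}{\Delta_{r,s}}$ is a $\bpminmoduv$-module, then so is $\Mod{C} = \bpsfsymb^{-1}\brac[\big]{\bpmod{j_{r,s}}{\Delta_{r,s}}}$, which, as shown in the proof of \cref{prop:red2}, is an irreducible \chw\ module with an \infdim\ top space embedded as a submodule of $\bprmod{j'_{r,s}}{h_{r,s}}{w_{r,s}}$. Since the pair $(h,w)$ for which such an embedding exists is unique (\cref{prop:infdimhwm}\ref{it:infdimchw}) and therefore equals $(h_{r,s},w_{r,s})$, the $\bpminmoduv$-module version of that statement forces $(h_{r,s},w_{r,s})\in\irrepsuv$, i.e. $\wmod{h_{r,s}}{w_{r,s}}$ is a $\wminmoduv$-module.

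For (1)$\Leftrightarrow$(2) I would use that $\qhrmin$ is exact, that $\qhrmin(\uslk)=\ubpk$ and $\qhrmin(\slminmoduv) = \bpminmoduv$ (so that applying $\qhrmin$ to $\ses{\mathcal{I}}{\uslk}{\slminmoduv}$, with $\mathcal{I}$ the maximal ideal of $\uslk$, identifies $\qhrmin(\mathcal{I})$ with the maximal ideal $\bpmaxpropk$ of $\ubpk$), and that $\qhrmin(\slmod{r}{s}) = \bpmod{j_{r,s}}{\Delta_{r,s}}$ by \cref{prop:red1} (which applies since $\vv\ge3$ forces $\kk\notin\ZZ_{\ge-1}$, so the reduction is nonzero and irreducible). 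Then (1)$\Rightarrow$(2) is immediate: $\qhrmin$ restricts to a functor from $\slminmoduv$-modules to $\bpminmoduv$-modules and sends $\slmod{r}{s}$ to $\bpmod{j_{r,s}}{\Delta_{r,s}}$. For (2)$\Rightarrow$(1), suppose $\slmod{r}{s}$ is not a $\slminmoduv$-module; then $\mathcal{I}$ acts nontrivially on the irreducible module $\slmod{r}{s}$, so $\mathcal{I}\cdot\slmod{r}{s} = \slmod{r}{s}$, and applying $\qhrmin$ and using its compatibility with module actions gives $\bpmaxpropk\cdot\bpmod{j_{r,s}}{\Delta_{r,s}} = \qhrmin\brac[\big]{\mathcal{I}\cdot\slmod{r}{s}} = \bpmod{j_{r,s}}{\Delta_{r,s}}\ne0$, contradicting (2).

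The main obstacle is the (1)$\Leftrightarrow$(2) step, and within it the two structural facts about minimal quantum Hamiltonian reduction that must be in place: that $\qhrmin(\slminmoduv) = \bpminmoduv$ for all the levels under consideration (in particular the nonadmissible ones with $\uu = 2$), and that $\qhrmin$ respects module actions closely enough that $\qhrmin(\mathcal{I}\cdot M) = \qhrmin(\mathcal{I})\cdot\qhrmin(M)$, so that the non-vanishing of $\bpmod{j_{r,s}}{\Delta_{r,s}}$ genuinely detects that $\bpmaxpropk$ acts nontrivially. Both are known for $\slthree$, but they (rather than the inverse-reduction bookkeeping) carry the argument. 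An alternative route for (2)$\Rightarrow$(1) through the regular reduction $\qhrreg$ and condition (3) would require the corresponding facts $\qhrreg(\slminmoduv) = \wminmoduv$ and $\qhrreg(\slmod{r}{s}) = \wmod{h_{r,s}}{w_{r,s}}$ and offers no real simplification.
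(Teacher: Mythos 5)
Your treatment of the equivalence (2) $\Leftrightarrow$ (3) is correct and runs on the same rails as the paper: \cref{prop:red2} realises $\bpsfsymb^{-1}\brac[\big]{\bpmod{j_{r,s}}{\Delta_{r,s}}}$ as the irreducible \chw\ submodule with \infdim\ top space of $\bprmod{j'_{r,s}}{h_{r,s}}{w_{r,s}}$, and the completeness/uniqueness statement of \cref{prop:infdimhwm}\ref{it:infdimchw} (the paper routes this through \cref{thm:classification} and \cref{rem:classbyhworchw}, but it is the same machinery) forces $(h_{r,s},w_{r,s}) \in \irrepsuv$ exactly when the \bp\ module descends to the minimal model. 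Your (1) $\Rightarrow$ (2) is also the same standard \qhr\ fact the paper cites.

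The gap is in (2) $\Rightarrow$ (1). Your argument hinges on the identity $\qhrmin(\mathcal{I}\cdot M) = \qhrmin(\mathcal{I})\cdot\qhrmin(M)$, which you describe as ``compatibility with module actions'' and assert is known for $\slthree$. It is not. Exactness of $\qhrmin$ gives you that $\qhrmin(\mathcal{I}\cdot\slmod{r}{s})$ is a submodule of $\qhrmin(\slmod{r}{s})$ and that $\qhrmin(\slminmoduv) = \ubpk/\qhrmin(\mathcal{I})$, but it does not tell you that the BRST cohomology of the submodule $\mathcal{I}\cdot M \subseteq M$ coincides with the subspace generated by the action of the reduced ideal on $\qhrmin(M)$. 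The inclusion you actually need, $\qhrmin(\mathcal{I}\cdot M) \subseteq \qhrmin(\mathcal{I})\cdot\qhrmin(M)$, is precisely the assertion that reduction \emph{reflects} (rather than merely preserves) the property of being a module for the simple quotient, and that is the entire content of this direction --- it is why the implication (1) $\Leftrightarrow$ (2) at nondegenerate levels required a genuine proof in \cite{FehCla20} rather than being formal. The paper closes this direction by going \emph{up} instead of down: it invokes the inverse reduction embedding $\slminmoduv \ira \bpminmoduv \otimes \VOA{SB} \otimes \lvoa$ of \cite{AdaRel21} together with the explicit realisation of $\slmod{r}{s}$ as a submodule of $\bpmod{j_{r,s}}{\Delta_{r,s}} \otimes \VOA{SB} \otimes \lvoa^{1/3}$, so that once $\bpmod{j_{r,s}}{\Delta_{r,s}}$ is known to be a $\bpminmoduv$-module, $\slmod{r}{s}$ is manifestly a module for the restriction of $\bpminmoduv \otimes \VOA{SB} \otimes \lvoa$ to $\slminmoduv$. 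Without this (or some substitute for your unproved identity), your (2) $\Rightarrow$ (1) does not go through.
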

\begin{proof}
	\ref{it:equivsl3} $\Ra$ \ref{it:equivbp} is a standard result about \qhr, see for example \cite[Prop.~4.7]{FehCla20}.

	For~\ref{it:equivbp} $\Ra$ \ref{it:equivsl3}, there also exists an inverse reduction embedding \cite[Thm.~5.2]{AdaRel21}
	\begin{equation}
		\slminmoduv \ira \bpminmoduv \otimes \VOA{SB} \otimes \lvoa,
	\end{equation}
	where $\VOA{SB}$ denotes the symplectic bosons \voa\ (also known as bosonic ghosts).
	Moreover, calculation shows that $\slmod{r}{s}$ may be explicitly realised \cite[Thm.~6.3(2)]{AdaRel21} a submodule of the tensor product of $\bpmod{j_{r,s}}{\Delta_{r,s}}$, $\VOA{SB}$ and a direct summand of $\lvoa^{1/3}$.

	So far, the proven implications hold for $\uu,\vv\ge2$.
	For~\ref{it:equivbp} $\Lra$ \ref{it:equivw3}, note that \cref{prop:red2} shows that $\bpsfsymb^{-1}(\bpmod{j_{r,s}}{\Delta_{r,s}})$ is an irreducible \chw\ submodule of a \frm.
	By \cref{thm:classification,rem:classbyhworchw}, which require $\vv\ge3$, $\bpsfsymb^{-1}(\bpmod{j_{r,s}}{\Delta_{r,s}})$ is a $\bpminmoduv$-module if and only if this \frm\ is.
	But, the latter condition is equivalent to $(h_{r,s},w_{r,s}) \in \irrepsuv$.
\end{proof}

When $\kk$ is nondegenerate ($\uu,\vv\ge3$), \ref{it:equivsl3} $\Lra$ \ref{it:equivbp} is exactly \cite[Thm.~4.8]{FehCla20}.
For $\uu=2$, we believe that this equivalence is new.
Here is an interesting \lcnamecref{cor:a23kl} for the \kl\ category $\klcat_{2,3}$ of ordinary $\slminmod{2}{3}$-modules.
\begin{corollary} \label{cor:a23kl}
	Every simple object in $\klcat_{2,3}$ is isomorphic to a module from the set
	\begin{equation}
		\set[\big]{\slmod{n}{1}, \slmod{1}{n} \st n \in \ZZ_{\ge1}}.
	\end{equation}
\end{corollary}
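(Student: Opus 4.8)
The plan is to combine the equivalences of \cref{thm:sl3bpw3} with the explicit list of ordinary $\bpminmod{2}{3}$-modules in \cref{thm:bp(23)fdtopclass}. Since $\kk=-\frac{7}{3}\notin\set{-3}\cup\ZZ_{\ge-1}$ and $(\uu,\vv)=(2,3)$ is a coprime pair with $\vv\ge3$, the hypotheses of \cref{prop:red1} and \cref{thm:sl3bpw3} are met. As $\slminmod{2}{3}$ is a quotient of $\uslk$, the simple objects of $\klcat_{2,3}$ are exactly the simple ordinary $\uslk$-modules $\slmod{r}{s}$, $r,s\in\ZZ_{\ge1}$, that happen to be $\slminmod{2}{3}$-modules; by the equivalence \ref{it:equivsl3} $\Leftrightarrow$ \ref{it:equivbp} of \cref{thm:sl3bpw3}, this holds if and only if $\bpmod{j_{r,s}}{\Delta_{r,s}}$ is a $\bpminmod{2}{3}$-module, where $j_{r,s}$ and $\Delta_{r,s}$ are as in \eqref{eq:jDrs} (with $\kk+3=\frac{2}{3}$). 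By \cref{prop:red1}, this module is an irreducible ordinary \hw\ $\ubpk$-module whose top space has dimension exactly $s$.

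Next I would compare, for each $(r,s)$, the module $\bpmod{j_{r,s}}{\Delta_{r,s}}$ against the five families of \cref{thm:bp(23)fdtopclass}. A module in that list whose top space has dimension $n$ lies, for $n=1$, only in family~\ref{it:bp(23)1} (together with the single value $\lambda=\frac{2}{3}$ of family~\ref{it:bp(23)3}), while for each $n\ge2$ there is exactly one module of dimension $n$ in each of families~\ref{it:bp(23)2}--\ref{it:bp(23)5}, with the relevant $\lambda$ determined by $n$. Hence, to decide whether $\bpmod{j_{r,s}}{\Delta_{r,s}}$ appears in the list, one reads off the admissible $\lambda$ from the top space dimension $s$ and then checks whether the $J_0$- and $L_0$-eigenvalues of the corresponding \hwv\ are equal to $j_{r,s}$ and $\Delta_{r,s}$. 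These eigenvalues are linear, respectively quadratic, in $\lambda$, so the comparison is a short finite computation; in particular, matching the $J_0$-eigenvalue against $j_{r,s}=\frac{r+2s-3}{3}$ amounts to a linear condition on $r$ and $s$.

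Carrying this out, I expect to find: for $s=1$, the module $\bpmod{j_{r,1}}{\Delta_{r,1}}$ is precisely the family-\ref{it:bp(23)1} module with $\lambda=r-1$, hence always a $\bpminmod{2}{3}$-module; for $r=1$ and $s\ge2$, it is the family-\ref{it:bp(23)4} module with $\lambda=-s-\frac{2}{3}$ (both coordinates matching, after also verifying the $\Delta$-equation); and for $r,s\ge2$, the top space dimension $s\ge2$ forces a comparison with families~\ref{it:bp(23)2}--\ref{it:bp(23)5}, in each of which the $J_0$-eigenvalue prescribed by $s$ is strictly different from $j_{r,s}$ (or the matching equation has no solution with $r\in\ZZ_{\ge1}$), a contradiction. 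Therefore $\slmod{r}{s}$ is a simple object of $\klcat_{2,3}$ if and only if $r=1$ or $s=1$, which is the claim. The two cases $r=1$ and $s=1$ are moreover interchanged by the diagram automorphism of $\slthree$, though the direct check of the two coordinates in families~\ref{it:bp(23)1} and~\ref{it:bp(23)4} is quick enough to make this observation unnecessary. No genuine obstacle is anticipated; the only care needed is to respect the allowed ranges of the parameter $\lambda$ in \cref{thm:bp(23)fdtopclass}, so that one does not spuriously match against a module outside its stated family.
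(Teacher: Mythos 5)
Your proposal is correct and follows essentially the same route as the paper: reduce via the equivalence \ref{it:equivsl3}~$\Leftrightarrow$~\ref{it:equivbp} of \cref{thm:sl3bpw3} (together with the top-space dimension computed in \cref{prop:red1}) and then match $(j_{r,s},\Delta_{r,s})$ against the five families of \cref{thm:bp(23)fdtopclass}, finding that only family~\ref{it:bp(23)1} (with $\lambda=r-1$, $s=1$) and family~\ref{it:bp(23)4} (with $\lambda=-s-\frac{2}{3}$, $r=1$) occur. The identifications you anticipate agree with those recorded in the paper's proof.
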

\begin{proof}
	This follows from \cref{thm:sl3bpw3} by comparing the formulae in \eqref{eq:jDrs} with the classification of irreducible ordinary $\bpminmod{2}{3}$-modules in \cref{thm:bp(23)fdtopclass}.
	The result is that the only solutions with $r,s \in \ZZ_{\ge1}$ correspond to families~\ref{it:bp(23)1} and \ref{it:bp(23)4} of the latter \lcnamecref{thm:bp(23)fdtopclass}, the former with $\lambda \in \NN$, $r=\lambda+1$, $s=1$ and the latter with $\lambda \in -\NN-\frac{5}{3}$, $r=1$, $s=-\lambda-\frac{2}{3}$.
\end{proof}

\begin{remark}
	Note that the two families of irreducible ordinary $\bpminmod{2}{3}$-modules that arise as minimal \qhr s of the irreducible ordinary $\slminmod{2}{3}$-modules are precisely those whose images under $\bpsfsymb$ are again ordinary.
	Indeed, for $n \in \ZZ_{\ge1}$, \cref{rem:bp23sforbits,prop:red1} give
	\begin{equation}
		\begin{gathered}
			\slmod{n}{1} \xrightarrow{\qhrmin} \bpmod{(n-1)/3}{h_{n-1}+(n-1)/3} \xrightarrow{\ \bpsfsymb\ } \bpmod{(3\lambda-7)/9}{h_{\lambda}-5/9}, \quad \text{where}\ \lambda = n-\tfrac{1}{3}, \\
			\slmod{1}{n} \xrightarrow{\qhrmin} \bpmod{-(6\lambda+10)/9}{h_{\lambda}-5/9} \xrightarrow{\ \bpsfsymb\ } \bpmod{\lambda/3}{h_{\lambda}+\lambda/3}, \quad \text{where}\ \lambda = -n-\tfrac{2}{3}.
		\end{gathered}
	\end{equation}
\end{remark}

\begin{remark}
	We believe that $\klcat_{2,3}$ is semisimple.
	We will study this category in forthcoming publications.
\end{remark}

\flushleft

\begin{thebibliography}{10}

\bibitem{AdaCla02}
D~Adamovi\'{c}.
\newblock Classification of irreducible modules of certain subalgebras of free
  boson vertex algebra.
\newblock {\em J. Algebra}, 270:115--132, 2003.
\newblock \opp{0207155}{math.QA}.

\bibitem{AdaRea17}
D~Adamovi\'{c}.
\newblock Realizations of simple affine vertex algebras and their modules: the
  cases $\widehat{sl(2)}$ and $\widehat{osp(1,2)}$.
\newblock {\em Comm. Math. Phys.}, 366:1025--1067, 2019.
\newblock \pp{1711.11342}{math.QA}.

\bibitem{AdaRel21}
D~Adamovi\'{c}, T~Creutzig and N~Genra.
\newblock Relaxed and logarithmic modules of $\widehat{\mathfrak{sl}_3}$.
\newblock {\em Math. Ann.} (to appear).
\newblock \texttt{https://doi.org/10.1007/s00208-023-02634-6}, \pp{2110.15203}{math.RT}.

\bibitem{AdaCon16}
D~Adamovi\'{c}, V~Kac, P~M\"{o}seneder Frajria, P~Papi and O~Per\v{s}e.
\newblock Conformal embeddings of affine vertex algebras in minimal
  {W}-algebras {I}: structural results.
\newblock {\em J. Algebra}, 500:117--152, 2018.
\newblock \pp{1602.04687}{math.RT}.

\bibitem{AdaRea20}
D~Adamovi\'{c}, K~Kawasetsu and D~Ridout.
\newblock A realisation of the {Bershadsky}--{Polyakov} algebras and their
  relaxed modules.
\newblock {\em Lett. Math. Phys.}, 111:38, 2021.
\newblock \pp{2007.00396}{math.QA}.

\bibitem{AdaBer20}
D~Adamovi\'{c} and A~Kontrec.
\newblock {Bershadsky}--{Polyakov} vertex algebras at positive integer levels
  and duality.
\newblock {\em Transform. Groups} (to appear).
\newblock \texttt{https://doi.org/10.1007/s00031-022-09721-z}, \pp{2011.10021}{math.QA}.

\bibitem{AdaCla19}
D~Adamovi\'{c} and A~Kontrec.
\newblock Classification of irreducible modules for {Bershadsky}--{Polyakov}
  algebra at certain levels.
\newblock {\em J. Algebra Appl.}, 20:2150102, 2021.
\newblock \pp{1910.13781}{math.QA}.

\bibitem{AdaVer95}
D~Adamovi\'{c} and A~Milas.
\newblock Vertex operator algebras associated to modular invariant
  representations of {$A_1^{\left(1\right)}$}.
\newblock {\em Math. Res. Lett.}, 2:563--575, 1995.
\newblock \opp{9509025}{q-alg}.

\bibitem{AraRep04}
T~Arakawa.
\newblock Representation theory of superconformal algebras and the
  {Kac}--{Roan}--{Wakimoto} conjecture.
\newblock {\em Duke Math. J.}, 130:435--478, 2005.
\newblock \opp{0405015}{math-ph}.

\bibitem{AraRat10}
T~Arakawa.
\newblock Rationality of {Bershadsky}--{Polyakov} vertex algebras.
\newblock {\em Comm. Math. Phys.}, 323:627--633, 2013.
\newblock \pp{1005.0185}{math.QA}.

\bibitem{AraAss10}
T~Arakawa.
\newblock Associated varieties of modules over {Kac}--{Moody} algebras and
  {$C_2$}-cofiniteness of {W}-algebras.
\newblock {\em Int. Math. Res. Not.}, 2015:11605--11666, 2015.
\newblock \pp{1004.1554}{math.QA}.

\bibitem{AraRat12b}
T~Arakawa.
\newblock Rationality of {W}-algebras: Principal nilpotent cases.
\newblock {\em Ann. Math.}, 182:565--604, 2015.
\newblock \pp{1211.7124}{math.QA}.

\bibitem{AraRat12}
T~Arakawa.
\newblock Rationality of admissible affine vertex algebras in the category
  $\mathcal{O}$.
\newblock {\em Duke Math. J.}, 165:67--93, 2016.
\newblock \pp{1207.4857}{math.QA}.

\bibitem{AraWei16}
T~Arakawa, V~Futorny and L-E Ramirez.
\newblock Weight representations of admissible affine vertex algebras.
\newblock {\em Comm. Math. Phys.}, 353:1151--1178, 2017.
\newblock \pp{1605.07580}{math.RT}.

\bibitem{AraRat19}
T~Arakawa and J~van Ekeren.
\newblock Rationality and fusion rules of exceptional {W}-algebras.
\newblock {\em J. Eur. Math. Soc.}, 25:2763--2813, 2023.
\newblock \pp{1905.11473}{math.RT}.

\bibitem{BerRep01}
S~Berman, C~Dong and S~Tan.
\newblock Representations of a class of lattice type vertex algebras.
\newblock {\em J. Pure Appl. Algebra}, 176:27--47, 2002.
\newblock \opp{0109215}{math.QA}.

\bibitem{BerCon91}
M~Bershadsky.
\newblock Conformal field theories via {Hamiltonian} reduction.
\newblock {\em Comm. Math. Phys.}, 139:71--82, 1991.

\bibitem{BouW3A95}
P~Bouwknegt, J~McCarthy and K~Pilch.
\newblock {\em The {$\mathcal{W}_3$} Algebra: Modules, Semi-Infinite Cohomology
  and {BV} Algebras}, volume m42 of {\em Lecture Notes in Physics}.
\newblock Springer, Berlin, 1996.
\newblock \opp{9509119}{hep-th}.

\bibitem{BouWSym92}
P~Bouwknegt and K~Schoutens.
\newblock {$\mathcal{W}$} symmetry in conformal field theory.
\newblock {\em Phys. Rep.}, 223:183--276, 1993.
\newblock \opp{9210010}{hep-th}.

\bibitem{CreCos18}
T~Creutzig, S~Kanade, T~Liu and D~Ridout.
\newblock Cosets, characters and fusion for admissible-level
  $\mathfrak{osp}(1\vert2)$ minimal models.
\newblock {\em Nucl. Phys.}, B938:22--55, 2018.
\newblock \pp{1806.09146}{hep-th}.

\bibitem{CreUni19}
T~Creutzig, T~Liu, D~Ridout and S~Wood.
\newblock Unitary and non-unitary {$N=2$} minimal models.
\newblock {\em J. High Energy Phys.}, 1906:024, 2019.
\newblock \pp{1902.08370}{math-ph}.

\bibitem{CreRib20}
T~Creutzig, R~McRae and J~Yang.
\newblock On ribbon categories for singlet vertex algebras.
\newblock {\em Comm. Math. Phys.}, 387:865--925, 2021.
\newblock \pp{2007.12735}{math.QA}.

\bibitem{CreRib22}
T~Creutzig, R~McRae and J~Yang.
\newblock Ribbon tensor structure on the full representation categories of the
  singlet vertex algebras.
\newblock {\em Adv. Math.}, 413:108828, 2023.
\newblock \pp{2202.05496}{math.QA}.

\bibitem{CreMod12}
T~Creutzig and D~Ridout.
\newblock Modular data and {Verlinde} formulae for fractional level {WZW}
  models {I}.
\newblock {\em Nucl. Phys.}, B865:83--114, 2012.
\newblock \pp{1205.6513}{hep-th}.

\bibitem{CreLog13}
T~Creutzig and D~Ridout.
\newblock Logarithmic conformal field theory: beyond an introduction.
\newblock {\em J. Phys.}, A46:494006, 2013.
\newblock \pp{1303.0847}{hep-th}.

\bibitem{CreMod13}
T~Creutzig and D~Ridout.
\newblock Modular data and {Verlinde} formulae for fractional level {WZW}
  models {II}.
\newblock {\em Nucl. Phys.}, B875:423--458, 2013.
\newblock \pp{1306.4388}{hep-th}.

\bibitem{EhoRep92}
W~Eholzer, M~Flohr, A~Honecker, R~H\"{u}bel, W~Nahm and R~Varnhagen.
\newblock Representations of {$\mathcal{W}$}-algebras with two generators and
  new rational models.
\newblock {\em Nucl. Phys.}, B383:249--288, 1992.

\bibitem{FatCon87}
V~Fateev and A~Zamolodchikov.
\newblock Conformal quantum field theory models in two dimensions having
  {$Z_3$} symmetry.
\newblock {\em Nucl. Phys.}, B280:644--660, 1987.

\bibitem{FehSub21}
Z~Fehily.
\newblock Subregular {W}-algebras of type {A}.
\newblock {\em Comm. Contemp. Math.}, 25:2250049, 2023.
\newblock \pp{2111.05536}{math.QA}.

\bibitem{FehCla20}
Z~Fehily, K~Kawasetsu and D~Ridout.
\newblock Classifying relaxed highest-weight modules for admissible-level
  {Bershadsky}--{Polyakov} algebras.
\newblock {\em Comm. Math. Phys.}, 385:859--904, 2021.
\newblock \pp{2007.03917}{math.RT}.

\bibitem{FehMod21}
Z~Fehily and D~Ridout.
\newblock Modularity of {Bershadsky}--{Polyakov} minimal models.
\newblock {\em Lett. Math. Phys.}, 122:46, 2022.
\newblock \pp{2110.10336}{math.QA}.

\bibitem{FeiEqu97}
B~Feigin, A~Semikhatov and I~Yu Tipunin.
\newblock Equivalence between chain categories of representations of affine $sl
  \left( 2 \right)$ and {$N = 2$} superconformal algebras.
\newblock {\em J. Math. Phys.}, 39:3865--3905, 1998.
\newblock \opp{9701043}{hep-th}.

\bibitem{FreVer01}
E~Frenkel and D~Ben-Zvi.
\newblock {\em Vertex Algebras and Algebraic Curves}, volume~88 of {\em
  Mathematical Surveys and Monographs}.
\newblock American Mathematical Society, Providence, 2001.

\bibitem{FreVer92}
I~Frenkel and Y~Zhu.
\newblock Vertex operator algebras associated to representations of affine and
  {Virasoro} algebras.
\newblock {\em Duke Math. J.}, 66:123--168, 1992.

\bibitem{FriCon86}
D~Friedan, E~Martinec and S~Shenker.
\newblock Conformal invariance, supersymmetry and string theory.
\newblock {\em Nucl. Phys.}, B271:93--165, 1986.

\bibitem{FutIrr96}
V~Futorny.
\newblock Irreducible non-dense {$A_1^{(1)}$}-modules.
\newblock {\em Pacific J. Math.}, 172:83--99, 1996.

\bibitem{GepStr86}
D~Gepner and E~Witten.
\newblock String theory on group manifolds.
\newblock {\em Nucl. Phys.}, B278:493--549, 1986.

\bibitem{GorSim06}
M~Gorelik and V~Kac.
\newblock On simplicity of vacuum modules.
\newblock {\em Adv. Math.}, 211:621--677, 2007.
\newblock \opp{0606002}{math-ph}.

\bibitem{HonAut92}
A~Honecker.
\newblock Automorphisms of {W}-algebras and extended rational conformal field
  theories.
\newblock {\em Nucl. Phys.}, B400:574--596, 1993.
\newblock \opp{9211130}{hep-th}.

\bibitem{HuaVer04a}
Y-Z Huang.
\newblock Vertex operator algebras, the {Verlinde} conjecture, and modular
  tensor categories.
\newblock {\em Proc. Natl. Acad. Sci. USA}, 102:5352--5356, 2005.
\newblock \opp{0412261}{math.QA}.

\bibitem{KacQua03}
V~Kac, S~Roan and M~Wakimoto.
\newblock Quantum reduction for affine superalgebras.
\newblock {\em Comm. Math. Phys.}, 241:307--342, 2003.
\newblock \opp{0302015}{math-ph}.

\bibitem{KacMod88}
V~Kac and M~Wakimoto.
\newblock Modular invariant representations of infinite-dimensional {Lie}
  algebras and superalgebras.
\newblock {\em Proc. Nat. Acad. Sci. USA}, 85:4956--4960, 1988.

\bibitem{KacQua03b}
V~Kac and M~Wakimoto.
\newblock Quantum reduction and representation theory of superconformal
  algebras.
\newblock {\em Adv. Math.}, 185:400--458, 2004.
\newblock \opp{0304011}{math-ph}.

\bibitem{KauExt91}
H~Kausch.
\newblock Extended conformal algebras generated by a multiplet of primary
  fields.
\newblock {\em Phys. Lett.}, B259:448--455, 1991.

\bibitem{KawRel20}
K~Kawasetsu.
\newblock Relaxed highest-weight modules {III}: character formulae.
\newblock {\em Adv. Math.}, 393:108079, 2021.
\newblock \pp{2003.10148}{math.RT}.

\bibitem{KawRel18}
K~Kawasetsu and D~Ridout.
\newblock Relaxed highest-weight modules {I}: rank $1$ cases.
\newblock {\em Comm. Math. Phys.}, 368:627--663, 2019.
\newblock \pp{1803.01989}{math.RT}.

\bibitem{KawRel19}
K~Kawasetsu and D~Ridout.
\newblock Relaxed highest-weight modules {II}: classifications for affine
  vertex algebras.
\newblock {\em Comm. Contemp. Math.}, 24:2150037, 2022.
\newblock \pp{1906.02935}{math.RT}.

\bibitem{KawAdm21}
K~Kawasetsu, D~Ridout and S~Wood.
\newblock An admissible-level $\mathfrak{sl}_3$ model.
\newblock {\em Lett. Math. Phys.}, 112:96, 2022.
\newblock \pp{2107.13204}{math.QA}.

\bibitem{KohFus88}
I~Koh and P~Sorba.
\newblock Fusion rules and (sub)modular invariant partition functions in
  nonunitary theories.
\newblock {\em Phys. Lett.}, B215:723--729, 1988.

\bibitem{LiPhy97}
H~Li.
\newblock The physics superselection principle in vertex operator algebra
  theory.
\newblock {\em J. Algebra}, 196:436--457, 1997.

\bibitem{MatCla00}
O~Mathieu.
\newblock Classification of irreducible weight modules.
\newblock {\em Ann. Inst. Fourier (Grenoble)}, 50:537--592, 2000.

\bibitem{McRRat21}
R~McRae.
\newblock On rationality for {$C_2$}-cofinite vertex operator algebras.
\newblock \pp{2108.01898}{math.QA}.

\bibitem{MizDet89}
S~Mizoguchi.
\newblock Determinant formula and unitarity for the {$W_3$} algebra.
\newblock {\em Phys. Lett.}, B222:226--230, 1989.

\bibitem{PolGau90}
A~Polyakov.
\newblock Gauge transformations and diffeomorphisms.
\newblock {\em Int. J. Mod. Phys.}, A5:833--842, 1990.

\bibitem{RidSL208}
D~Ridout.
\newblock $\widehat{\mathfrak{sl}} \left( 2 \right)_{-1/2}$: A case study.
\newblock {\em Nucl. Phys.}, B814:485--521, 2009.
\newblock \pp{0810.3532}{hep-th}.

\bibitem{RidRel15}
D~Ridout and S~Wood.
\newblock Relaxed singular vectors, {Jack} symmetric functions and fractional
  level $\widehat{\mathfrak{sl}} \left( 2 \right)$ models.
\newblock {\em Nucl. Phys.}, B894:621--664, 2015.
\newblock \pp{1501.07318}{hep-th}.

\bibitem{RidVer14}
D~Ridout and S~Wood.
\newblock The {Verlinde} formula in logarithmic {CFT}.
\newblock {\em J. Phys. Conf. Ser.}, 597:012065, 2015.
\newblock \pp{1409.0670}{hep-th}.

\bibitem{SemInv94}
A~Semikhatov.
\newblock Inverting the {Hamiltonian} reduction in string theory.
\newblock In {\em 28th International Symposium on Particle Theory,
  Wendisch-Rietz, Germany}, pages 156--167, 1994.
\newblock \opp{9410109}{hep-th}.

\bibitem{SmiCla90}
S~Smith.
\newblock A class of algebras similar to the enveloping algebra of $sl(2)$.
\newblock {\em Trans. Amer. Math. Soc.}, 322:285--314, 1990.

\bibitem{VerFus88}
E~Verlinde.
\newblock Fusion rules and modular transformations in {2D} conformal field
  theory.
\newblock {\em Nucl. Phys.}, B300:360--376, 1988.

\bibitem{WanCla97}
W~Wang.
\newblock Classification of irreducible modules of {$W_3$} algebra with $c=-2$.
\newblock {\em Commun. Math. Phys.}, 195:113--128, 1998.
\newblock \pp{9708016}{math.QA}.

\bibitem{WatDet89}
G~Watts.
\newblock Determinant formulae for extended algebras in two-dimensional
  conformal field theory.
\newblock {\em Nucl. Phys.}, B326:648--672, 1989.

\bibitem{WitNon84}
E~Witten.
\newblock Non-abelian bosonization in two dimensions.
\newblock {\em Comm. Math. Phys.}, 92:455--472, 1984.

\bibitem{ZamInf85}
A~Zamolodchikov.
\newblock Infinite additional symmetries in two-dimensional conformal quantum
  field theory.
\newblock {\em Theoret. and Math. Phys.}, 65:1205--1213, 1985.

\bibitem{ZhuMod96}
Y~Zhu.
\newblock Modular invariance of characters of vertex operator algebras.
\newblock {\em J. Amer. Math. Soc.}, 9:237--302, 1996.

\end{thebibliography}
\providecommand{\opp}[2]{\textsf{arXiv:\mbox{#2}/#1}}
\providecommand{\pp}[2]{\textsf{arXiv:#1 [\mbox{#2}]}}

\end{document}